\title[Binary Curves]{Binary Curves of Small Fixed Genus and Gonality with Many Rational Points}
\author{Xander Faber}
\author{Jon Grantham}
\address{Institute for Defense Analyses \\
Center for Computing Sciences \\
17100 Science Drive \\
Bowie, MD} 
\begin{document}

\begin{abstract}
  We determine the maximum number of rational points on a curve over $\FF_2$
  with fixed gonality and small genus. This document differs from the published
  edition in \textit{Journal of Algebra} in one significant way: it contains
  Appendix~B on quadratic forms over finite fields and their associated
  orthogonal groups.
\end{abstract}

\maketitle


\section{Introduction}

Let $C_{/\FF_q}$ be a smooth complete algebraic curve of genus $g$ over the
finite field with $q$ elements.  The Weil bound gives an upper limit on the
number of rational points on this curve in terms of $q$ and $g$:
\[
  \#C(\FF_q) \leq q + 1 + 2g \sqrt{q}.
\]
It is natural to ask how close we can get to this bound as we vary the curve
$C$. Serre, Oesterl\'e, Drinfeld-Vl\v{a}du\c{t}, and others gave improvements
to this bound in a variety of settings and constructed examples of curves that
in some cases meet these improved bounds. For a brief survey of this work, see
\cite{voight}; a more thorough treatment is given in
\cite{Niederreiter_Xing_book}. From a computational point of view, van der Geer
and van der Vlugt \cite{vandersquared} built the first comprehensive table of
maximal values for small genus and field size. Their table evolved into
\url{manypoints.org}.

We restrict our attention almost entirely to the binary field $\FF_2$ in the
present work. Write $N_2(g)$ for the maximum number of rational points on a
curve of genus $g$ over $\FF_2$. The values in Table~\ref{tab:manypoints} come
from \url{manypoints.org}.
\begin{table}[ht]
\begin{tabular}{c|c}
  $g$ & $N_2(g)$ \\
  \hline
  0 & 3 \\
  1 & 5 \\
  2 & 6 \\
  3 & 7 \\
  4 & 8 \\
  5 & 9 \\
\end{tabular}
\caption{Maximum number of rational points on binary curves with fixed genus}
\label{tab:manypoints}
\end{table}

The \textbf{gonality} of a curve $C$ over a field~$k$ is the minimum degree of a
$k$-morphism $C \to \PP^1$. As special cases, one often refers to curves with
genus $g \ge 2$ and gonality~2 as \textbf{hyperelliptic}, and curves with
gonality~3 as \textbf{trigonal}. We caution the reader that some authors use the
word ``gonality'' to mean the gonality of $\bar C = C \times_{\Spec k} \Spec
k^{\mathrm{sep}}$, and some authors use the word ``trigonal'' to mean ``admits a morphism
to $\PP^1$ of degree~3'', without the requirement that no morphism of degree
less than $3$ exists. We will also say that ``$C$ has a $g^1_d$'' if it has a
$\Gal(k^{\mathrm{sep}} / k)$-invariant divisor $D$ of degree-$d$ such that $\dim
|D| \geq 1$; if true, then the gonality of $C$ is at most~$d$.

Van der Geer \cite{coffac} asked,
\begin{quote}
\textit{What is the maximum number of rational points on a curve of genus $g$ and
gonality $\gamma$ defined over $\FF_q$?}
\end{quote}
We found no evidence in the literature that anyone has tried to answer this
question, so we are embarking on a program of filtering
Table~\ref{tab:manypoints} according to the gonality invariant. To that end,
define $N_2(g,\gamma)$ to be the supremum of the number of rational points on a
smooth proper connected curve over $\FF_2$ with genus~$g$ and
gonality~$\gamma$. We use the supremum in order to be able to make statements
like $N_2(1,3) = -\infty$; this means there is no genus-1 curve with
gonality~3. With regard to non-existence results, we found some valuable insight
in \cite{Castryck_Tuitman} on which gonalities can occur for a curve of small
genus over finite fields \textit{with odd characteristic}.

The remainder of the paper will be dedicated to explaining the entries that
appear in Table~\ref{tab:manygonality}, where we have limited our attention to
curves of genus at most~5. While this is somewhat arbitrary, it reflects the
limitation of our understanding of the geometry of curves of genus $g \geq
6$. But even for genera that we are explicitly considering, certain entries like
$N_2(1,3)$ do not appear: this is because of the general fact that a curve of
genus~$g$ over a finite field has gonality at most $g+1$
(Proposition~\ref{prop:gengon}). We omit entries beyond this bound from
Table~\ref{tab:manygonality}. Beyond our systematic approach to genus $g \le 5$,
some additional values of $N_2(g,\gamma)$ can be obtained from a variety ad hoc
methods; we give examples in Appendix~\ref{sec:misc}.

\begin{table}[th]
  \begin{tabular}{c|c|c|c}
  $g$ & $\gamma$ & $N_2(g,\gamma)$ & Reference \\
  \hline \hline
  0 & 1 & 3 & $\PP^1$ \\
  \hline
  1 & 2 & 5 & Equation~\eqref{eq:1-2} \\
  \hline
  2 & 2 & 6 & Theorem~\ref{thm:hyperelliptic} \\
  \hline
  3 & 2 & 6 & Theorem~\ref{thm:hyperelliptic} \\
   & 3 & 7 & Theorem~\ref{thm:3-3} \\
   & 4 & 0 & Theorem~\ref{thm:3-4} \\
  \hline
  4 & 2 & 6 & Theorem~\ref{thm:hyperelliptic} \\
   & 3 & 8 & Theorem~\ref{thm:4-3} \\
   & 4 & 5 & Theorem~\ref{thm:4-4} \\
   & 5 & 0 & Theorem~\ref{thm:4-5} \\
  \hline
  5 & 2 & 6 & Theorem~\ref{thm:hyperelliptic} \\
   & 3 & 8 & Theorem~\ref{thm:5-3} \\
   & 4 & 9 & Theorem~\ref{thm:5-4}  \\
   & 5 & 3 & Theorem~\ref{thm:5-5} \\
   & 6 & $-\infty$ & Theorem~\ref{thm:5-6} \\  
  \end{tabular}
  \caption{Supremum of the number of rational points on a binary curve with
    fixed genus and gonality.}
  \label{tab:manygonality}
\end{table}

Given a curve $C_{/\FF_2}$ with gonality $\gamma$, and a morphism $\pi : C \to
\PP^1$ of degree~$\gamma$, we observe that every rational point of $C$ must lie
over a rational point of $\PP^1$, and that over each such point there are at
most $\gamma$ rational points of $C$:
\begin{equation}
\label{eq:gamma_bound}
\#C(\FF_2) \leq \#\PP^1(\FF_2) \cdot \gamma = 3\gamma.
\end{equation}
This ``gonality-point inequality'' is substantially weaker than the truth when
$g$ is small; for example, $N_2(5,4) = 9$. It is also much weaker than the Weil
bound. If $C$ were an example of equality in \eqref{eq:gamma_bound}, we would
have
\[
3\gamma = \#C(\FF_2) \leq 3 + 2g\sqrt{2} \quad \Rightarrow \quad g \geq \frac{3(\gamma-1)}{2\sqrt{2}}.
\]
We conjecture that small genus is the only obstruction to achieving equality:

\begin{conjecture}
  Fix $\gamma \geq 2$. For $g$ sufficiently large, $N_2(g,\gamma) = 3\gamma$. 
\end{conjecture}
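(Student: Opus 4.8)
The plan is to prove the lower bound $N_2(g,\gamma) \ge 3\gamma$ for all sufficiently large $g$, since the matching upper bound is already furnished by the gonality-point inequality \eqref{eq:gamma_bound}. The first observation, which removes what looks like the hardest hypothesis, is that the gonality need not be controlled directly: if $C_{/\FF_2}$ carries a degree-$\gamma$ map to $\PP^1$ and has exactly $3\gamma$ rational points, then its gonality is automatically $\gamma$. Indeed the map forces gonality $\le \gamma$, while any map of degree $\delta$ realizing the gonality gives $3\gamma = \#C(\FF_2) \le 3\delta$ by \eqref{eq:gamma_bound}, hence $\delta \ge \gamma$. Thus the conjecture reduces to the purely constructive statement: for each fixed $\gamma \ge 2$ and every sufficiently large $g$, there is a smooth, proper, geometrically connected cover $\pi \colon C \to \PP^1$ over $\FF_2$ of degree $\gamma$ and genus $g$ in which each of the three rational points $0,1,\infty$ splits completely, so that $\#C(\FF_2) \ge 3\gamma$ and equality is forced.

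I would build such covers by assembling prime-power pieces. For $\gamma = 2^k$ the natural tool is an iterated Artin--Schreier tower. Starting from $\PP^1$, each step adjoins a root of $y^2 + y = f$, where $f$ is chosen to vanish at every rational point of the current curve (the local condition for complete splitting in characteristic two) and to have its poles only at places of degree $\ge 2$; a Riemann--Roch argument produces such $f$, and requiring at least one genuine wild pole keeps the cover geometrically connected. By Riemann--Hurwitz the genus is governed entirely by the conductors at the chosen non-rational poles, so it can be pushed arbitrarily high while $0,1,\infty$ remain split at every stage.

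For the odd part of $\gamma$ the Artin--Schreier mechanism is unavailable, and I would instead produce a tame cyclic cover of degree equal to each odd prime $\ell \mid \gamma$ by class field theory: working in a ray class field of $\FF_2(t)$ with conductor supported at $\infty$ together with a suitable high-degree place, one selects the subextension in which the Frobenius of each of $0,1,\infty$ is trivial --- that is, the quotient of the ray class group by the subgroup generated by the classes of the three rational points --- and extracts an $\ell$-part. The compositum of the $2^k$-cover and the cyclic covers for the odd primes, arranged to be linearly disjoint, is then a geometrically connected degree-$\gamma$ cover in which all three rational points split completely, since they split in each factor.

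The step I expect to be the genuine obstacle --- and the reason this remains a conjecture --- is the simultaneous control of the \emph{entire} genus range together with complete splitting, uniformly in $\gamma$. Because $\PP^1_{/\FF_2}$ has only the three rational points, and all of them are reserved for splitting, every genus-producing ramification must be placed at closed points of degree $\ge 2$, whose Riemann--Hurwitz contributions come in a restricted list of sizes; in the even part these are further rigidified by the characteristic-two conductor jumps. One must therefore show that the set of attainable genera is cofinite, most naturally by a numerical-semigroup (Chicken McNugget) argument exhibiting coprime-enough increments from small-degree places. Verifying that the odd, tame building blocks exist for every $\ell$ with the split condition surviving in the ray class quotient, and that their genus contributions mesh with the even part to fill in \emph{all} large $g$ for every $\gamma$, is where the real work --- and the current gap --- lies.
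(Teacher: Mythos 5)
First, a point of calibration: the statement you were asked to prove is labeled a conjecture in the paper, and it is genuinely open there; the authors prove it only for $\gamma = 2$ (Theorem~\ref{thm:hyperelliptic}) and explicitly flag even $\gamma = 3$ as requiring more ingenuity. So there is no complete proof in the paper to compare yours against, and your proposal --- by your own admission in its final paragraph --- is likewise not a proof but a program with an acknowledged gap. That said, your reduction is correct and worth recording: a degree-$\gamma$ map together with exactly $3\gamma$ rational points forces the gonality to equal $\gamma$ by \eqref{eq:gamma_bound}, so it suffices to construct, for every sufficiently large $g$, a geometrically connected degree-$\gamma$ cover of $\PP^1$ over $\FF_2$ of genus $g$ in which $0$, $1$, $\infty$ split completely. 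Your degree-$2$ building block is exactly the paper's construction: the curve of Theorem~\ref{thm:hyperelliptic} is an Artin--Schreier cover in which all three rational points of $\PP^1$ split, realized for every $g \ge 2$.

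The gaps are the ones you name, and they are real. First, for the odd part you invoke ray class field theory, but you never verify that the quotient of the ray class group by the subgroup generated by the classes of $0$, $1$, $\infty$ has order divisible by $\ell$ for a suitable conductor; without this, the desired cyclic $\ell$-cover with complete splitting need not exist, and for $\gamma$ divisible by $\ell^a$ with $a \ge 2$ you need cyclic covers of prime-power degree, which you do not address at all. Second, even granting the building blocks, the compositum argument requires geometric connectedness, i.e., linear disjointness over $\bar \FF_2$ rather than over $\FF_2$; complete splitting of the three rational points in each factor does not provide this for free. Third, and most seriously, the conjecture demands all sufficiently large $g$, not infinitely many: your numerical-semigroup argument is only gestured at, and one must show that the Riemann--Hurwitz contributions available from wild ramification in the $2$-part and tame ramification in the odd part --- all supported at closed points of degree at least $2$ --- generate a cofinite set of genera, uniformly for each fixed $\gamma$, after the base genus of the compositum is accounted for. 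Until these three points are carried out, what you have is a sensible research plan that correctly generalizes the paper's $\gamma = 2$ argument, not a proof of the conjecture.
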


We give a simple construction that proves this conjecture in the case $\gamma =
2$ (Theorem~\ref{thm:hyperelliptic}). The case $\gamma = 3$ should require a bit
more ingenuity since a plane curve over $\FF_2$ has at most 7 rational
points.\footnote{Shortly before this paper was accepted for publication, we
  learned that Vermeulen has addressed the case $\gamma = 3$ using curves on
  toric surfaces \cite{vermeulen2021curves}.}

A curve $C$ over a field $k$ has gonality at most~$d$ if it admits a morphism $f
: C \to \PP^1$ of degree~$d$. If we write $D = f^{-1}(\infty)$, then $D$ is a
Galois-invariant divisor and the complete linear system $|D|$ has degree~$d$ and
dimension at least~1. In this way, we see that computing the gonality of $C$ can
be reduced to looking for Galois-invariant effective divisors that move in
families. A canonical divisor $K$ on $C$ is one candidate, and Galois orbits of
points in $C(k^{\mathrm{sep}})$ provide others. In Section~\ref{sec:gonality}, we assemble
a number of results (most of which are known) that will allow us to identify the
gonality of curves of small genus.

We treat curves of genus at most~3 in Sections~\ref{sec:at_most_2} and
\ref{sec:genus3}. The arithmetic of a non-hyperelliptic curve of genus $g \geq
4$ is intimately tied to the quadric hypersurfaces in $\PP^{g-1}$ on which it
lives. It is fruitful to understand these hypersurfaces via the classification
of quadratic forms. As we could not find a single self-contained reference that
gives all of the results we need in characteristic~2, we assemble the necessary
facts in Appendix~\ref{app:quadratic_forms}. Curves of genus~4 and~5 are studied
in Sections~\ref{sec:genus4} and~\ref{sec:genus5}.

Our work on curves of genus at most~4 lives squarely in the realm of pure
mathematics: we provide an upper bound for the number of rational points on a
curve with specified $g$ and $\gamma$ and then exhibit a curve that achieves
this bound. This also works for genus-5 curves when the gonality is at
most~4. However, we were unable to maintain this pattern for $g = 5$ and $\gamma
\geq 5$. Instead, we provide a certain amount of theoretical scaffolding, and
then complete the edifice with an exhaustive computation in Sage
\cite{sage_8.7}. A curious non-existence result comes out of these computations:

\begin{theorem}
There is no curve of genus~5 and gonality~6 over $\FF_2$. 
\end{theorem}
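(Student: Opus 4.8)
The plan is to separate the \emph{geometric} gonality from the gonality over $\FF_2$. By the Brill--Noether existence theorem, every smooth curve of genus $5$ over $\bar\FF_2$ carries a $g^1_4$ (as $\lfloor(g+3)/2\rfloor = 4$), so geometrically such a curve has gonality at most $4$. Thus gonality $6$ over $\FF_2$ is a purely arithmetic phenomenon: it can occur only if no low-degree pencil is $\Gal(\bar\FF_2/\FF_2)$-invariant. My strategy is to first extract strong necessary conditions on a hypothetical gonality-$6$ curve by a residuation argument, and then to eliminate the finitely many survivors by analyzing their pencils of degree $4$.

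The crucial piece of scaffolding is the following. Suppose $C$ carries a Galois-invariant effective divisor $E$ of degree $3$, and let $K$ be a Galois-invariant canonical divisor. Then $D = K - E$ has degree $8 - 3 = 5$, and since $K - D = E$, Riemann--Roch gives
\[
\dim|D| = \deg D - g + 1 + \dim|K-D| = 1 + \dim|E| \geq 1.
\]
Hence $D$ is a $g^1_5$ and the gonality of $C$ is at most $5$. Consequently a curve of gonality $6$ can have \emph{no} Galois-invariant effective divisor of degree $3$: in particular it has no $\FF_2$-rational point (otherwise $3P$ would be such a divisor) and no closed point of degree $3$, which together force $\#C(\FF_2) = \#C(\FF_8) = 0$.

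It remains to prove that every genus-$5$ curve $C$ over $\FF_2$ with $\#C(\FF_2) = \#C(\FF_8) = 0$ has gonality at most $4$, equivalently that one of its $g^1_4$'s is defined over $\FF_2$. Such a curve is neither hyperelliptic nor trigonal (those have gonality at most $3$), so its canonical model is the complete intersection of a net of quadrics in $\PP^4$, and its $g^1_4$'s are cut out by the rulings of the singular quadrics in that net. Whether a \emph{rational} $g^1_4$ exists is therefore governed by the $\FF_2$-points of the locus $W^1_4$ parametrizing these pencils, which one reads off from the net of quadrics together with the classification of quadratic forms in characteristic~$2$ (Appendix~\ref{app:quadratic_forms}). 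The Weil bound, combined with the vanishing of $\#C(\FF_2)$ and $\#C(\FF_8)$, confines the $L$-polynomial to a finite list, so only finitely many curves must be examined.

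I expect the last step to be the main obstacle. In characteristic $2$ the polar bilinear form of any quadric on the odd-dimensional $\PP^4$ is automatically degenerate, so the clean characteristic-$0$ picture of $W^1_4$ as an \'etale double cover of a plane quintic discriminant breaks down, and the rationality of a degree-$4$ pencil cannot be settled by a bare dimension count. I would therefore enumerate the finitely many nets of quadrics in $\PP^4$ over $\FF_2$ whose base locus is a smooth genus-$5$ curve with no points over $\FF_2$ or $\FF_8$, and verify by exhaustive computation in Sage that each such curve admits a rational $g^1_4$; the nonexistence of a gonality-$6$ curve follows.
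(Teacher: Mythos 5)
Your proposal follows essentially the same route as the paper: your residuation argument is exactly the paper's Lemma~\ref{lem:gonality5} (a Galois-invariant effective divisor $E$ of degree~3 yields a $g^1_5$ inside $|K-E|$, so a gonality-6 curve has no points of degree 1 or 3), your structural input is Theorem~\ref{thm:genus5_nontrig} together with the classification of quadrics and Lemma~\ref{lem:notIorII}, and your concluding exhaustive Sage search over nets of quadrics in $\PP^4_{\FF_2}$ is precisely the paper's Algorithm~\ref{alg:gonality5}. The only difference is organizational and amounts to a mirror image of the same computation: the paper enumerates the curves of gonality at least~5 (nets all of whose nonzero forms have type III or IV) and exhibits a cubic point on each pointless one, whereas you would enumerate curves with no $\FF_2$- or $\FF_8$-points and verify a rational $g^1_4$ on each --- and to make your enumeration feasible you would in practice be driven to the same orbit-reduction under orthogonal groups that the paper uses.
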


General theory of gonality on curves over finite fields shows that a curve of
genus~$g$ has gonality at most $g+1$ (see \S\ref{sec:gonality}), but there is no
guarantee that any curve of gonality $g+1$ exists. In fact, Weil's (lower)
inequality in tandem with Corollary~\ref{cor:gonality5} proves that no curve of
genus~5 and gonality~6 over $\FF_q$ exists as soon as $q > 4$. The above theorem
deals with one of the cases the Weil bound misses. We note that the question of
whether the above theorem holds over $\FF_3$ appears in
\cite[Rem.~3.14]{Castryck_Tuitman}.

We draw attention to two results in \S\ref{sec:genus5_gen} that we expect to be
of value to a wider audience. Over an algebraically closed field, it is known
that a canonically embedded curve of genus~5 is the complete intersection of
quadric hypersurfaces if and only if it is non-trigonal. With the help of Galois
cohomology, we extend this result to an arbitrary perfect field
(Theorem~\ref{thm:genus5_nontrig}). In particular, this shows that a curve of
genus~5 over a perfect field is trigonal if and only if it is geometrically
trigonal (Corollary~\ref{cor:perfect_trigonal}). We believe this to be a new
result for fields of cohomological dimension larger than~1.

While our results are focused on binary curves, many of the results in this
article apply equally well to curves over more general fields. We explicitly
state when a result applies more broadly. Unless otherwise specified, in this
article a \textbf{curve} $C_{/k}$ is a smooth proper geometrically irreducible
scheme of dimension~1 over a field $k$. A divisor on $C$ will always be defined
over the ground field $k$. For a divisor $D$ on $C_{/k}$, we write $L(D)$ for
the $k$-vector space of rational functions $f$ whose divisor satisfies $\div(f)
\geq -D$. The set of effective divisors that are linearly equivalent to $D$ is
denoted $|D|$, and we recall that when it is nonempty, it admits the structure
of a projective space with $\dim |D| = \dim L(D) - 1$.


\section{Generalities on gonalities}
\label{sec:gonality}

Throughout this section, we work over a fixed finite field $\FF_q$.

The following results are well known, at least in some form --- see the appendix
of \cite{Poonen_gonality}, for example. We collect them here for ease of use
later. We begin with general bounds for the gonality in terms of the genus, thus
limiting the number of entries that must appear in Table~\ref{tab:manygonality}.

\begin{proposition}
  \label{prop:gengon}
  Let $C_{/\FF_q}$ be a curve of genus~$g$ and gonality $\gamma$.
  \begin{enumerate}
  \item If $g = 0$, then $\gamma = 1$.
  \item If $g \in \{1,2\}$, then $\gamma = 2$.
  \item If $g \ge 2$ and $C(\FF_q) \ne \varnothing$, then $\gamma \leq g$.
  \item In general, the gonality satisfies $\gamma \leq g+1$.     
  \end{enumerate}
\end{proposition}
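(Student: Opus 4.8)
The plan is to establish the general bound~(4) first, since it carries the only genuinely nonelementary input, and then to derive the remaining three statements from it together with the Riemann--Roch theorem. Throughout I would use the paper's notation $L(D)$ for the associated Riemann--Roch space and the criterion that a Galois-invariant divisor $D$ with $\deg D = d$ and $\dim |D| \ge 1$ witnesses a $g^1_d$, whence $\gamma \le d$.

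For part~(4), I would invoke the theorem of F.~K.~Schmidt: a curve over a finite field always carries a divisor of degree~$1$ (equivalently, the greatest common divisor of the degrees of its closed points is~$1$). Fixing such a Galois-invariant divisor $D_1$ with $\deg D_1 = 1$, set $D = (g+1)D_1$, so $\deg D = g+1$. Riemann--Roch gives $\dim L(D) \ge \deg D - g + 1 = 2$, hence $\dim |D| \ge 1$ and $C$ admits a $g^1_{g+1}$; therefore $\gamma \le g+1$. Part~(1) is then immediate: gonality is always at least~$1$, and for $g = 0$ part~(4) forces $\gamma \le 1$, so $\gamma = 1$.

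For part~(3), assume $C(\FF_q) \ne \varnothing$ and fix a rational point $P$. I would take $D = K - (g-2)P$, where $K$ is a Galois-invariant canonical divisor (available since the canonical class is defined over $\FF_q$); this $D$ is Galois-invariant of degree $(2g-2) - (g-2) = g$. Applying Riemann--Roch to the effective divisor $(g-2)P$ (effective as $g \ge 2$) yields $\dim L(K - (g-2)P) = \dim L((g-2)P) + 1 \ge 2$, where $\dim L((g-2)P) \ge 1$ because the constant functions lie in that space. Thus $|D|$ is a $g^1_g$ and $\gamma \le g$. Finally, for part~(2) with $g \in \{1,2\}$, the curve is not rational, so $\gamma \ge 2$; when $g = 1$ part~(4) gives $\gamma \le 2$, and when $g = 2$ the canonical divisor $K$ has $\deg K = 2$ and $\dim L(K) = g = 2$, so $|K|$ is a $g^1_2$. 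In either case $\gamma = 2$.

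The main obstacle is not any single calculation but locating the one nonelementary ingredient: the existence of a degree-$1$ divisor in part~(4). Over a general field this can fail, and with it the uniform bound $\gamma \le g+1$; it is precisely the finiteness of the ground field (via F.~K.~Schmidt) that rescues the argument, and every other step is routine Riemann--Roch bookkeeping once a suitable Galois-invariant divisor of the right degree has been produced.
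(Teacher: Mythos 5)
Your proof is correct, and for parts (3) and (4) it coincides with the paper's argument: Schmidt's degree-$1$ divisor plus Riemann--Roch gives $\gamma \le g+1$, and the divisor $K - (g-2)P$ gives $\gamma \le g$ when a rational point exists. Where you genuinely diverge is in the low-genus cases. The paper handles (1) and the $g=1$ case of (2) by first producing rational points: for $g=0$ it embeds $C$ as a conic via $|-K|$ and uses the fact that conics over finite fields have points, then projects to get $C \cong \PP^1$; for $g=1$ it invokes the Weil bound $|\#C(\FF_q)-(q+1)| \le 2\sqrt{q}$ to get $P \in C(\FF_q)$ and then uses $|2P|$. You instead derive both cases formally from part (4): for $g=0$ the bound $\gamma \le g+1 = 1$ together with the trivial inequality $\gamma \ge 1$ forces $\gamma = 1$ (and a degree-$1$ map to $\PP^1$ is automatically an isomorphism, so you recover $C \cong \PP^1$ as well); for $g=1$ the bound gives $\gamma \le 2$ while non-rationality gives $\gamma \ge 2$. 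Your ordering is more economical --- the single nonelementary input (Schmidt's theorem) does all the work, and you never need the existence of rational points on genus-$0$ or genus-$1$ curves over finite fields. What the paper's route buys is precisely those stronger existence statements as byproducts, which are of independent interest and make the genus-$0$ and genus-$1$ cases self-contained and visibly geometric; your route makes the logical dependence cleaner, showing that (1) and half of (2) are formal consequences of the general bound. Both are valid, and your Riemann--Roch bookkeeping (e.g.\ $\dim L(K-(g-2)P) = \dim L((g-2)P) + 1 \ge 2$) checks out exactly.
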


\begin{proof}
  Every curve of genus zero over a finite field has a rational point. (Recall
  that the anti-canonical linear system $|-K|$ has degree~2 and dimension~2, so
  a genus-zero curve is cut out by an irreducible quadratic form in 3
  variables. Every such form has a nonzero solution over a finite field.) Linear
  projection through the rational point gives an isomorphism between the curve
  and $\PP^1$. That is, $\gamma = 1$ when $g = 0$.

  Every curve of genus~1 over a finite field also has a rational point. Indeed,
  the Weil bound takes the form
  \[
    |\#C(\FF_q) - (q+1)| \leq 2\sqrt{q},
    \]
  and $q+1 > 2\sqrt{q}$ for all $q > 1$. Choosing $P \in C(\FF_q)$, Riemann-Roch
  implies that the linear system $|2P|$ has dimension~1. Hence, $\gamma = 2$.

  For a curve of genus~2, Riemann-Roch shows that the canonical linear system
  $|K|$ has degree~2 and dimension~1. So $\gamma = 2$.
  
  Suppose that $g \geq 2$, and let $P \in C(\FF_q)$. The linear system $|K -
  (g-2)P|$ has degree~$g$. Since $|(g-2)P|$ is nonempty, Riemann-Roch gives the
  inequality
  \[
  \dim |K - (g-2)P| = \dim |(g-2)P| + 1 \geq 1.
  \]
  Hence, $\gamma \leq g$. 
  
  Finally, we show that $\gamma \leq g+1$ in general for a curve $C$ over a
  finite field. This statement appears as
  \cite[Cor.~4.2.18]{TVN_alg_geom_codes}, though the proof is
  incomplete.\footnote{The authors of \cite{TVN_alg_geom_codes} assert that
    every curve over a finite field $\FF_q$ has a point defined over an
    extension of degree~$(g~+~1)$. While this is true, it does not follow from the
    Weil bound when $q = 2$ and $2 \le g \le 6$. We thank Felip\'e Voloch for
    pointing us toward Schmidt's proof.} A
  result of F.K. Schmidt from 1931 \cite[Cor.~3.1.12]{TVN_alg_geom_codes}
  asserts that there is a (Galois-invariant) divisor $D$ on $C$ of degree~1. The
  linear system $|(g+1)D|$ has degree $g+1$, and Riemann-Roch shows it has
  dimension
  \[
    \dim |(g+1)D| = \dim |K-(g+1)D| + 2 \geq 1. \qedhere
    \]
\end{proof}

We now give some refinements of this result which will help us to determine the
gonality of certain examples we come across.



\begin{proposition}
  \label{prop:g1g}
  Let $C_{/\FF_q}$ be a curve of genus $g \geq 2$. Then $C$ admits a $g^1_g$ if
  and only if there is an effective divisor $D$ on $C$ of degree $g - 2$.
\end{proposition}

\begin{proof}
  If $C$ has a $g^1_g$, then there is an effective divisor $D_0$ of degree $g$
  such that $\dim |D_0| \ge 1$. By Riemann-Roch, we see that
  \[
  \dim |K - D_0| = \dim |D_0| - 1 \ge 0.
  \]
  Let $D$ be an effective divisor in the linear system $|K - D_0|$. Then $D$ has
  degree $g-2$, as desired. Conversely, if $D$ is an effective divisor on $C$ of
  degree $g-2$, then
  \[
    \dim |K - D| \ge \dim |K| - \deg(D) = 1.
    \]
  That is, $|K - D|$ contains a $g^1_g$. 
\end{proof}

\begin{corollary}
  \label{cor:genus3_gonality4}
  Let $C_{/\FF_q}$ be a non-hyperelliptic curve of genus~3. If $C(\FF_q)$ is nonempty,
  then $C$ has gonality~3; otherwise, $C$ has gonality~4.
\end{corollary}

\begin{proof}
  Proposition~\ref{prop:g1g} implies that $C$ has a $g^1_3$ if and only if it
  has an effective divisor of degree~1 --- i.e., a rational point. If $C$ does
  not have gonality~3, it must have gonality~4 by Proposition~\ref{prop:gengon}
\end{proof}

\begin{corollary}
  \label{cor:genus4_gonality4}
  Let $C_{/\FF_q}$ be a curve of genus~4 that is not hyperelliptic or
  trigonal. Then $C$ has gonality $4$ if $C(\FF_{q^2}) \ne \varnothing$, and
  otherwise it has gonality~5.
\end{corollary}

\begin{proof}
  We claim that $C$ has a $g^1_4$ if and only if $C(\FF_{q^2}) \ne \varnothing$.
  Proposition~\ref{prop:g1g} asserts that $C$ has a $g^1_4$ if and only if it
  admits an effective divisor $D$ of degree~2. If $D$ is supported at a single
  point $P$, then $P$ is rational. Otherwise, the support of $D$ is a single
  Galois orbit of quadratic points. This proves the claim.

  Since $C$ is not hyperelliptic or trigonal, we see that it has gonality~4 if
  $C(\FF_{q^2}) \ne \varnothing$, and otherwise it has gonality~5 by
  Proposition~\ref{prop:gengon}.
\end{proof}

\begin{corollary}
  \label{cor:gonality5}
  Let $C$ be a curve of genus~5 over $\FF_q$ with gonality at least 5. Then $C$
  has gonality~5 if $C(\FF_{q^3}) \ne \varnothing$, and otherwise it has gonality~6.
\end{corollary}

\begin{proof}
  As in the previous proof, it suffices to show that $C$ has a $g^1_5$ if and
  only if $C(\FF_{q^3}) \ne \varnothing$. Proposition~\ref{prop:g1g} asserts
  that $C$ has a $g^1_5$ if and only if there is an effective divisor $D$ on $C$
  of degree $3$. If $D$ is supported at a single point $P$, then that point must
  be rational. If $D$ is supported at two distinct points $P$, $Q$, then without
  loss we have $D = 2P + Q$, and the Galois invariance of $D$ implies that both
  $P$ and $Q$ are rational. Finally, suppose that $D = P + Q + R$ for three
  distinct points $P,Q,R$. If none of these points is rational, then they
  constitute a Galois orbit of length 3. In all cases, we find that
  $C(\FF_{q^3}) \ne \varnothing$.
\end{proof}


\section{Curves of genus at most~2}
\label{sec:at_most_2}

Proposition~\ref{prop:gengon} allows us to address curves of small genus rather
quickly.  A curve $C_{/\FF_2}$ of genus~0 is isomorphic to $\PP^1_{\FF_2}$, and
hence, $\#C(\FF_2) = 3$. That is, $N_2(0,1) = 3$.

A curve $C_{/\FF_2}$ of genus~1 has gonality~2. The Weil bound shows that
$\#C(\FF_2) \leq 2 + 1 + 2\sqrt{2} < 6$. Hence $N_2(1,2) \leq 5$. The following
example of an elliptic curve shows that this bound is sharp:
\begin{equation}
  \label{eq:1-2}
E_{/\FF_2} : y^2 + y = x^3 + x.
\end{equation}

A curve $C_{/\FF_2}$ of genus~2 is hyperelliptic. The following result, applied
when $g=2$, tells us that $N_2(2,2) = 6$.

\begin{theorem}
  \label{thm:hyperelliptic}
Fix an integer $g \ge 2$, and let $\delta \in \{0,1\}$ satisfy $\delta \equiv g
\pmod 2$.  The curve $C_{/\FF_2}$ with affine plane equation
  \[
  y^2 + \left( x^{g+1} + x^g + 1\right) y = \left[x(x+1)\right]^{g -
    \delta}
  \]
is hyperelliptic of genus~$g$ and has~6 rational points. In particular,
$N_2(g,2) = 6$.
\end{theorem}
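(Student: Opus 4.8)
\emph{The plan} is to realize $C$ as an Artin--Schreier double cover of the $x$-line, read off the genus and point count from that model, and then close the argument with the gonality--point inequality \eqref{eq:gamma_bound}. Write $h = x^{g+1} + x^g + 1$ and $f = [x(x+1)]^{g-\delta}$, and let $C$ denote the smooth proper model of the given affine curve. The first observation is that $m := g-\delta$ is \emph{even} for both parities of $g$, so $f = (x^2+x)^m$ is a perfect square; write $f = F^2$ with $F = (x^2+x)^{m/2}$. Substituting $t = (y+F)/h$ transforms the defining equation into the Artin--Schreier form $t^2 + t = F/h$: one computes $t^2 + t = \bigl[(y^2+F^2) + h(y+F)\bigr]/h^2$ and simplifies the numerator to $hF$ using $y^2 + hy = F^2$. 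The projection $x : C \to \PP^1$ has degree~2, so the gonality is at most~2; once the genus is shown to equal $g \ge 2$, the curve is not rational, so its gonality is exactly~2 and $C$ is hyperelliptic.

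For the genus I would apply Riemann--Hurwitz to $x : C \to \PP^1$ via the model $t^2 + t = F/h$. First I would verify that $h$ is squarefree: in characteristic~2 one finds $h' = x^g$ when $g$ is even and $h' = x^{g-1}$ when $g$ is odd, and since $h(0) = 1$ we get $\gcd(h,h') = 1$ in either case. Hence $h$ has $g+1$ distinct roots $\alpha \in \bar\FF_2$. Because $h(0) = h(1) = 1$, none of these roots is $0$ or $1$, so $F(\alpha) \ne 0$ and $F/h$ has a \emph{simple} pole at each of the $g+1$ roots of $h$; moreover $\deg F < \deg h$, so $F/h$ has no pole at infinity. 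For an Artin--Schreier cover $t^2+t = v$ in characteristic~2 a pole of $v$ of odd order~$n$ contributes different exponent $n+1$, so each of our simple poles contributes~$2$ and nothing else ramifies. The different therefore has degree $2(g+1)$, and Riemann--Hurwitz gives $2g_C - 2 = -4 + (2g+2)$, i.e.\ $g_C = g$. Since the ramification occurs at points defined over $\bar\FF_2$, the cover stays connected geometrically, so $C$ is geometrically irreducible.

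To count rational points I would examine the fibers of $x$ over the three points of $\PP^1(\FF_2)$. Over $x=0$ and $x=1$ we have $h = 1$ and $f = 0$, so the fiber equation is $y^2 + y = 0$, yielding the two rational points $y \in \{0,1\}$ in each fiber. Over $x = \infty$ the function $F/h$ vanishes, so $t^2 + t = F/h$ reduces to $t^2 + t = 0$ over the residue field $\FF_2$ and the place splits into two rational places. Every ramification point lies over a root $\alpha \notin \FF_2$ of $h$, so contributes no rational point. Hence $\#C(\FF_2) = 2 + 2 + 2 = 6$.

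Finally I would combine the two bounds. The construction gives $N_2(g,2) \ge 6$. Conversely, a hyperelliptic curve has $\gamma = 2$, so \eqref{eq:gamma_bound} forces $\#C(\FF_2) \le 3\cdot 2 = 6$, whence $N_2(g,2) \le 6$; therefore $N_2(g,2) = 6$. I expect the characteristic-2 genus computation to be the main obstacle: the essential points are recognizing that $f$ is a perfect square (which is exactly what lets the Artin--Schreier reduction have only \emph{simple} poles) and confirming that $h$ is squarefree with no root at $0$ or $1$, since these are what pin the different down to precisely $2g+2$.
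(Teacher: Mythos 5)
Your proposal is correct, and it takes a genuinely different route to the genus than the paper does. The paper works with the hyperelliptic model directly: it writes down the second affine chart at infinity, $z^2 + \left[w^{g+1} + w + 1\right]z = w^{2+2\delta}(1+w)^{g-\delta}$, invokes the description of hyperelliptic curves in \cite[Prop.~7.4.24]{Qing_Liu_Algebraic_Geometry} to conclude that $C$ is smooth of genus~$g$, and then reads off $4$ affine rational points plus $2$ points at infinity. You instead pass to Artin--Schreier standard form: observing that $g-\delta$ is even, so the right-hand side is a square $F^2$, you substitute $t = (y+F)/h$ to get $t^2+t = F/h$, check $h$ is squarefree with $h(0)=h(1)=1$, and conclude that $F/h$ has exactly $g+1$ simple poles and no pole at infinity; Riemann--Hurwitz with different exponent $2$ at each pole then gives genus~$g$, and the simple (odd-order) poles also certify geometric irreducibility, which the paper's citation handles implicitly. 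Your splitting analysis of the fibers over $x \in \{0,1,\infty\}$ recovers the same $2+2+2 = 6$ points, and the closing gonality-point argument is identical to the paper's. What your route buys is an explanation of \emph{why} the equation is rigged the way it is --- the exponent $g-\delta$ is exactly what makes $f$ a perfect square, so the reduced function has only simple poles and the different is pinned at $2g+2$ --- at the cost of invoking Artin--Schreier ramification theory rather than a single citation; the paper's route is shorter but leaves the design of the equation unexplained. The only point worth tightening in your write-up is the irreducibility remark: the clean statement is that $F/h$ cannot equal $u^2+u$ for any $u \in \bar\FF_2(x)$ because such functions have only even-order poles, whereas $F/h$ has simple ones; "ramification occurs at points defined over $\bar\FF_2$" is not by itself the reason.
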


\begin{proof}  
  A model for $C$ near infinity is given by setting $y = z / w^{g+1}$ and $x =
  1/w$:
  \[
    z^2 + \left[w^{g+1} + w + 1\right]z = w^{2+2\delta} (1+w)^{g-\delta}.
    \]
  The description of hyperelliptic curves in
  \cite[Prop.~7.4.24]{Qing_Liu_Algebraic_Geometry} shows that $C$ is smooth of
  genus~$g$, and one sees immediately that $C$ has 4 affine rational points and
  2 rational points at infinity (with $w = 0$).

  This example shows that $N_2(g,2) \geq 6$. The opposite inequality follows
  from the gonality-point inequality \eqref{eq:gamma_bound}.
\end{proof}


\section{Curves of genus~3}
\label{sec:genus3}

Proposition~\ref{prop:gengon} shows that a curve $C_{/\FF_2}$ of genus~3 has
gonality at most~4. The case of a hyperelliptic curve is immediately dispensed
with by Theorem~\ref{thm:hyperelliptic}. Every other curve of genus~3 is
canonically embedded as a quartic curve in $\PP^2$; conversely, the adjunction
formula shows that any smooth quartic curve in $\PP^2$ has genus~3.  (In
general, the canonical class is defined over the ground field, so the canonical
embedding will be as well.)

\begin{theorem}
  \label{thm:3-3}
  $N_2(3,3) = 7$.
\end{theorem}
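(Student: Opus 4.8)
The plan is to bound $\#C(\FF_2)$ from above using the plane model of a genus-3 curve, and then to realize the bound with a single explicit smooth plane quartic.

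First I would establish $N_2(3,3) \le 7$. A curve of genus~3 with gonality~3 is non-hyperelliptic, so by the discussion opening Section~\ref{sec:genus3} it is canonically embedded as a smooth quartic in $\PP^2$. Every rational point of $C$ is then a rational point of $\PP^2$, and since $\#\PP^2(\FF_2) = 7$ we obtain $\#C(\FF_2) \le 7$. This is sharper than the gonality-point inequality \eqref{eq:gamma_bound} and is the natural bound to use here.

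Next I would prove $N_2(3,3) \ge 7$ by exhibiting a smooth plane quartic over $\FF_2$ through all seven points of $\PP^2(\FF_2)$. I would take
\[
F = x^3 y + x^2 y^2 + x^2 z^2 + x z^3 + y^3 z + y z^3,
\]
for which vanishing at each of $[1{:}0{:}0], \ldots, [1{:}1{:}1]$ is immediate. To confirm that $C = V(F)$ is smooth, I would compute the partials in characteristic~2,
\[
\partial_x F = x^2 y + z^3, \qquad \partial_y F = x^3 + y^2 z + z^3, \qquad \partial_z F = z^2(x+y) + y^3,
\]
and check that they have no common zero on $C$. Setting $z=0$ forces $x=y=0$ from $\partial_y F = \partial_z F = 0$; setting $z=1$, the equations $\partial_x F = \partial_z F = 0$ give $y = x^{-2}$ and $x^7 = x^4 + 1$, whereas substituting $y = x^{-2}$ into $F = 0$ forces $x^8 = 1$, i.e.\ $x = 1$, which violates $x^7 = x^4+1$. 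As the conditions are incompatible, $C$ is smooth. A smooth plane quartic has genus~3, is non-hyperelliptic, and carries a rational point, so Proposition~\ref{prop:genus3_gonality4} gives gonality~3. Hence $N_2(3,3) \ge 7$, and combining with the upper bound yields $N_2(3,3) = 7$.

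The main obstacle is producing a genuinely smooth example in characteristic~2. A short computation shows that a quartic passes through all seven $\FF_2$-points exactly when the pure powers $x^4,y^4,z^4$ are absent and each ``support class'' of monomials (those involving the same set of variables) contributes an even number of terms. The difficulty is that the most symmetric such quartics are singular: for instance $x^3y + xy^3 + y^3z + yz^3 + z^3x + zx^3$ is singular at $[1{:}1{:}1]$, and several natural variants acquire a singularity at a coordinate point or even become reducible, precisely because the derivative of a square monomial vanishes identically in characteristic~2. The real content is therefore a careful choice of coefficients making $\nabla F$ nonzero at all seven $\FF_2$-points while leaving the Jacobian system with no zero on $C$ over $\bar\FF_2$; the verification above can be done by hand or confirmed in Sage.
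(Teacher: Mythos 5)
Your proof is correct and follows essentially the same route as the paper: the identical upper bound via the canonical embedding into $\PP^2$, and the identical curve (Dickson's quartic $x^3y + x^2y^2 + x^2z^2 + xz^3 + y^3z + yz^3$) for the lower bound, with gonality~3 deduced from Proposition~\ref{prop:genus3_gonality4}. The only difference is that you verify smoothness directly from the partial derivatives (correctly), whereas the paper cites Dickson's classification \cite[\S5]{Dickson_Quartics} for nonsingularity.
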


\begin{proof}
Dickson observed that the plane curve
\[
C_{/\FF_2} : x^3 y + x^2 y^2 + xz^3 + x^2z^2 + y^3z + yz^3 = 0
\]
is the unique nonsingular quartic (up to isomorphism) that passes through all 7
rational points of the projective plane \cite[\S5]{Dickson_Quartics}. As it is
smooth, it has genus~3. Corollary~\ref{cor:genus3_gonality4} shows that $C$
has gonality~3. Thus, $N_2(3,3) \geq 7$.

For the reverse inequality, let $C$ be any curve of genus~3 and gonality~3,
which we may identify with its image in $\PP^2$ under a canonical embedding. The
projective plane over $\FF_2$ has 7 rational points, so we obtain $N_2(3,3) \leq
7$.
\end{proof}

\begin{theorem}
  \label{thm:3-4} $N_2(3,4) = 0$.
\end{theorem}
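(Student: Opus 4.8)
The quantity $N_2(3,4)$ is defined as a supremum, so proving $N_2(3,4)=0$ amounts to two separate assertions: first, that every genus-$3$, gonality-$4$ curve over $\FF_2$ has exactly $0$ rational points (forcing $N_2(3,4)\le 0$), and second, that at least one such curve exists (ruling out the value $-\infty$). The first assertion is essentially free. A curve of gonality $4$ is in particular non-hyperelliptic, hence canonically embedded as a smooth plane quartic, and Proposition~\ref{prop:genus3_gonality4} tells us that such a curve over $\FF_2$ has gonality $4$ precisely when $C(\FF_2)=\varnothing$. Thus every genus-$3$, gonality-$4$ curve is pointless, and conversely every pointless smooth plane quartic over $\FF_2$ is automatically a genus-$3$ curve of gonality $4$.

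It therefore remains only to produce a single smooth plane quartic over $\FF_2$ with no $\FF_2$-rational points. Finding a quartic that merely avoids the $7$ points of $\PP^2(\FF_2)$ is a routine linear-algebra exercise: the space of ternary quartic forms over $\FF_2$ is $15$-dimensional, and I would check that the evaluation map to $\FF_2^7$ (reduction at the seven rational points) is surjective. Since the all-ones vector then lies in its image, there is an $8$-dimensional affine family of quartics taking the value $1$ at every point of $\PP^2(\FF_2)$, and these are exactly the forms defining pointless curves. The task is to locate a \emph{smooth} member of this family and invoke Proposition~\ref{prop:genus3_gonality4} once more to conclude that its gonality is $4$.

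The one real obstacle is smoothness in characteristic $2$. The most symmetric pointless candidate, $x^4+y^4+z^4+x^2y^2+x^2z^2+y^2z^2$, is the perfect square $(x^2+y^2+z^2+xy+yz+zx)^2$, and the partial derivatives of any square vanish identically, so it is singular along its entire zero locus; this degeneracy must be broken by hand. I would add a term whose gradient survives, for instance taking
\[
  F = x^4+y^4+z^4+x^3y+x^3z+y^3z+x^2yz,
\]
which one checks still evaluates to $1$ at all seven rational points. The final step is the Jacobian criterion: compute $\partial_x F = x^2(y+z)$, $\partial_y F = x^3 + y^2 z + x^2 z$, and $\partial_z F = x^3 + y^3 + x^2 y$, and verify that their common zero locus over $\bar\FF_2$ meets $\{F=0\}$ only at the zero vector. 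Splitting into the cases $x=0$ and $y=z$ coming from the factored first derivative reduces this to checking that $F$ restricted to $y=z$ equals $(x^2+xy+y^2)^2$ while the remaining derivatives do not vanish on $x^2+xy+y^2=0$ --- a short computation. Since a smooth plane quartic is automatically geometrically irreducible (two components would meet by B\'ezout and create a singularity), this $F$ exhibits the required curve and completes the proof.
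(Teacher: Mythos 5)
Your proposal is correct and takes essentially the same route as the paper: the paper also derives both inequalities from Proposition~\ref{prop:genus3_gonality4} and witnesses existence with an explicit pointless smooth plane quartic, citing Dickson's example $x^4 + y^4 + z^4 + x^2y^2 + x^2z^2 + y^2z^2 + x^2yz + xy^2z + xyz^2 = 0$ (whose last three terms break exactly the perfect-square degeneracy you identify) where you construct your own. I checked your curve: $F$ equals $1$ at all seven points of $\PP^2(\FF_2)$, your partial derivatives are correct in characteristic~$2$, and the case analysis ($x=0$ leads only to $(0{:}0{:}1)$, which is off the curve; on $y=z$ the curve forces $x^2+xy+y^2=0$, where $\partial_y F$ reduces to $x^2y \neq 0$), so the argument is sound.
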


\begin{proof}
  In \cite[\S4]{Dickson_Quartics}, Dickson observed that the plane quartic curve
  \[
  C_{/\FF_2} : x^4 + y^4 + z^4 + x^2y^2 + x^2z^2 + y^2z^2 + x^2yz + xy^2z +
  xyz^2 = 0
  \]
  has no $\FF_2$-rational point. One verifies easily that it is nonsingular,
  which means $C$ has genus~3. Corollary~\ref{cor:genus3_gonality4} shows
  that $C$ has gonality~4, so $N_2(3,4) \geq 0$. The same proposition shows that
  $N_2(3,4) \leq 0$.
\end{proof}





\section{Curves of genus~4}
\label{sec:genus4}

For a curve of genus~4, Proposition~\ref{prop:gengon} shows that the gonality is
at most 5. Our approach will serve as a paradigm for the more difficult case of
genus-5 curves.

Hyperelliptic curves are handled by Theorem~\ref{thm:hyperelliptic}, where we
learned that $N_2(4,2) = 6$. The key geometric facts about non-hyperelliptic
curves of genus~4 are contained in the following result. We sketch a proof as we
could not easily reconstruct one from the available literature. (The result is
stated, for example, in \cite[\S4.4]{DLR_Cubic_Hypersurfaces}.)

\begin{lemma}
  \label{lem:gen4}
  Let $C$ be a genus-4 curve over a finite field $\FF_q$. If $C$
  is not hyperelliptic, then the canonical linear system $|K|$ embeds $C$ into
  $\PP^3_{\FF_q} = \Proj \FF_q[x,y,z,w]$ as the intersection of a unique quadric
  surface $S$ and a cubic surface. Exactly one of the following is true, up to
  automorphism of $\PP^3$:
  \begin{enumerate}
  \item $C$ admits a $g^1_3$, say $|D|$, such that $D \not\sim K - D$. Then $S =
    \{xy + zw = 0\}$, and $S$ is isomorphic to $\PP^1 \times \PP^1$ under the
    Segre embedding. The two linear systems $|D|$ and $|K - D|$ correspond to
    the two rulings on $\PP^1 \times \PP^1$, and $C$ corresponds to a smooth
    curve of bidegree~$(3,3)$.
    
  \item $C$ admits a $g^1_3$, say $|D|$, such that $D \sim K - D$. Then $S =
    \{xy + z^2 = 0\}$, a quadric cone, and the family of lines through the
    singularity cuts out the unique $g^1_3$ on $C$.

  \item $C$ admits no $g^1_3$. In this case, $S = \{xy + N(z,w) = 0\}$, where
    $N(z,w)$ is the norm form for $\FF_{q^2}/\FF_q$, as in \eqref{eq:norm_form}.
  \end{enumerate}
\end{lemma}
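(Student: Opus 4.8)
The plan is to exhibit $C$ as a complete intersection of a quadric and a cubic by dimension counts, and then to read off the three cases from the arithmetic classification of quadratic forms in four variables.

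First I would set up the complete intersection. Since $C$ is non-hyperelliptic of genus~$4$, the canonical system $|K|$ has degree $2g-2 = 6$ and embeds $C$ nondegenerately in $\PP^3$, with $\mathcal{O}_{\PP^3}(1)|_C = \mathcal{O}_C(K)$. The restriction map $H^0(\PP^3,\mathcal{O}(2)) \to H^0(C,2K)$ runs from a space of dimension $\binom{5}{2} = 10$ to one of dimension $\deg(2K) - g + 1 = 9$, so its kernel is nonzero: $C$ lies on a quadric $S$. This $S$ is irreducible, since otherwise the irreducible curve $C$ would lie in a plane, contradicting nondegeneracy; and it is unique, because two distinct irreducible quadrics meet in a curve of degree $4 < 6 = \deg C$. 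Repeating in degree~$3$, the restriction $H^0(\PP^3,\mathcal{O}(3)) \to H^0(C,3K)$ has kernel of dimension at least $\binom{6}{3} - (\deg(3K) - g + 1) = 20 - 15 = 5$; subtracting the $4$-dimensional space of cubics divisible by $S$ leaves a cubic $F$ not containing $S$. Then $C \subseteq S \cap F$, and a degree count ($2 \cdot 3 = 6$) together with irreducibility of $C$ gives $C = S \cap F$.

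Next I would pin down $S$. Irreducibility forces the rank of $S$ to be $3$ or $4$, and the classification of quadratic forms over $\FF_q$ assembled in Appendix~\ref{app:quadratic_forms} supplies the normal forms: a rank-$3$ form is the cone $xy + z^2$, while a rank-$4$ form is either the split $xy + zw$ or the anisotropic $xy + N(z,w)$, with $N$ the norm form of \eqref{eq:norm_form}. The vertex of the cone, being the unique singular point of a form defined over $\FF_q$, is $\FF_q$-rational.

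Finally I would match each normal form to its $g^1_3$ structure. Over $\bar\FF_q$ a smooth quadric is $\PP^1 \times \PP^1$; a hyperplane restricts to the class $(1,1)$, so $C$ has class $(a,b)$ with $a + b = C \cdot (1,1) = 6$, and the genus formula $(a-1)(b-1) = g = 4$ forces bidegree $(3,3)$. Each of the two rulings then cuts a pencil of degree $(1,0)\cdot(3,3) = 3$ on $C$, that is, a $g^1_3$; since the two fibre classes restrict to divisors summing to the hyperplane class $K$, these two systems are $|D|$ and $|K - D|$. In the split case both rulings are $\FF_q$-rational, yielding two distinct systems with $D \not\sim K - D$ (case~1); in the anisotropic case Frobenius interchanges the rulings, so no $g^1_3$ is defined over $\FF_q$ (case~3). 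For the cone, the single ruling of lines through the rational vertex cuts the unique $g^1_3$; because the two rulings of the smooth quadric have here degenerated into one, the residual $|K - D|$ coincides with $|D|$, so $2D \sim K$ (case~2). The main obstacle will be the characteristic-$2$ Galois bookkeeping: verifying that the rank-$4$ forms fall into exactly the split and norm-form types, that Frobenius acts on the rulings as claimed, and that residuation $D \mapsto K - D$ corresponds precisely to interchanging rulings --- which is exactly what the quadratic-forms appendix is designed to furnish.
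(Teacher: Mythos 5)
Your argument runs in the opposite direction from the paper's: you go from the classification of the quadric $S$ to the $g^1_3$ structure of $C$, whereas the paper goes from a given $g^1_3$ (and the dichotomy $D \sim K-D$ or not) to an explicitly constructed canonical embedding landing on the corresponding quadric, and then obtains case (3) by elimination using uniqueness of the quadric. Your direction can be made to work, but as written it has a genuine gap: every statement you make about $g^1_3$'s concerns only the pencils cut out by rulings, and you never show that \emph{every} $g^1_3$ on $C$ arises from lines on $S$. This is exactly what is needed (i) in the anisotropic case: you show the two geometric ruling pencils are interchanged by Frobenius, hence neither is individually rational, but that does not exclude some \emph{other} degree-3 pencil defined over $\FF_q$, so the conclusion ``$C$ admits no $g^1_3$'' does not follow; (ii) for the ``exactly one of the following'' assertion, to rule out that a curve on the split quadric also carries a self-residual $g^1_3$; and (iii) for the uniqueness of the $g^1_3$ claimed in case (2). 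Moreover, this missing step is not the ``characteristic-2 Galois bookkeeping'' about quadratic forms where you locate the residual difficulty --- the appendix cannot supply it. It is projective geometry of the canonical curve: by Lemma~\ref{lem:lingon} with $d=3$ (geometric Riemann--Roch), every divisor in a $g^1_3$ spans a line meeting $C$ in three points; by Bezout, a line meeting the quadric $S$ in three points lies on $S$; hence every $g^1_3$ is cut out by lines of $S$, i.e.\ by rulings (or, on the cone, by lines through the vertex). Once this is inserted, your three cases do close up.

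A second, smaller gap: in the split case you assert $D \not\sim K - D$ without proof. The argument (which the paper gives) is that if the two ruling restrictions were linearly equivalent, then, $C$ being non-hyperelliptic, these pencils are complete, so $|D| = |K-D|$; the two projections $C \to \PP^1$ would then differ by an automorphism of $\PP^1$, forcing $C$ to lie on the graph of that automorphism, a $(1,1)$-curve --- contradicting bidegree $(3,3)$. Your complete-intersection setup (the dimension counts and the Bezout uniqueness argument) is correct and simply unpacks the Hartshorne reference that the paper cites, and your bidegree computation is fine. Note that the paper's curve-to-quadric direction is precisely what lets it avoid classifying all $g^1_3$'s: a $g^1_3$ with $D \sim K-D$ (resp.\ $D \not\sim K-D$) is used to build a canonical embedding onto the cone (resp.\ onto $xy+zw=0$), and uniqueness of the quadric then does all of the exclusivity work, with the anisotropic case following by elimination.
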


\begin{proof}
  As $C$ is not hyperelliptic, the canonical linear system is very ample of
  degree~6 and dimension~3. The image of the corresponding morphism $C
  \hookrightarrow \PP^3$ is the intersection of a \textit{unique} quadric
  surface $S = \{Q=0\}$ and a cubic surface $\{F = 0\}$
  \cite[IV.5.2.2]{Hartshorne_Bible}. The classification of quadratic forms in at
  most 4 variables (Theorem~\ref{thm:classification}) shows that, up to linear
  change of coordinates and rescaling, $Q$ may be taken to be among the following
  \begin{itemize}
  \item (4 variables) $xy + zw$ or $xy + N(z,w)$;
  \item (3 variables) $xy + z^2$;
  \item (2 variables) $xy$ or $N(x,y)$;
  \item (1 variable) $x^2$.
  \end{itemize}
  The forms in 1 or 2 variables are geometrically reducible; as $C$ is
  not contained in a hyperplane of $\PP^3$, none of these cases can occur. The
  three remaining surfaces are pairwise non-isomorphic over~$\FF_q$.

  Suppose that $C$ admits a $g^1_3$, say $|D|$. By Riemann-Roch, $|K-D|$ is
  also a $g^1_3$. As we have assumed that $C$ is not hyperelliptic, neither of
  these linear systems has a basepoint. We need to consider separately the cases
  $D \sim K - D$ and $D \not\sim K - D$.

  Assume that $D$ is a $g^1_3$ such that $D \sim K - D$. Let $f \in L(D)$ be a
  nonconstant rational function with $\div(f) \geq - D$. The divisor of poles of
  $f$ is precisely $D$, else $C$ would be rational or hyperelliptic. Then the
  Riemann-Roch space $L(2D) \cong L(K)$ has dimension $4$, and it contains the
  linearly independent functions $1$, $f$, and $f^2$. Let $h \in L(2D)$ complete
  this to a basis. Define a morphism by
  \begin{eqnarray*}
    C \smallsetminus \supp(D) &\to& \PP^3 = \Proj \FF_q[x,y,z,w] \\
    P &\mapsto & (-1, f^2(P), f(P), h(P)).
  \end{eqnarray*}
  Note that the image lies on the surface $xy + z^2 = 0$. As $C$ is smooth, this
  morphism extends over all of $C$, and it is clear by its definition that it
  yields the canonical embedding of $C$.

  Now assume that $D$ is a $g^1_3$ with $D \not\sim K - D$. Consider the
  morphism $C \to \PP^1 \times \PP^1$ induced by $|D|$ and $|K-D|$. Applying the
  Segre embedding gives a composition
  \[
  C \to \PP^1 \times \PP^1 \hookrightarrow \PP^3,
  \]
  and if we choose the coordinates correctly, the image lies on the quadric
  surface $\{xy + zw = 0\}$. Intersecting a hyperplane with this surface gives a
  bidegree-$(1,1)$ divisor on $\PP^1 \times \PP^1$, and pulling it back to $C$
  gives $D + K-D = K$. That is, this composition corresponds to the canonical
  linear system. In order to show that the map $C \to \PP^3$ is a canonical
  embedding, it remains to check that it corresponds to the complete linear
  system $|K|$, or equivalently, that the image of $C$ does not lie on a
  hyperplane in $\PP^3$. Suppose otherwise. Since a hyperplane pulls back to a
  $(1,1)$-divisor on $\PP^1\times \PP^1$, we conclude that the image of $C$ in
  $\PP^1 \times \PP^1$ is a $(1,1)$-curve, say $C_0$. Write $\psi : C \to C_0$
  for the induced morphism. By the adjunction formula, the curve $C_0$ has
  genus~0. Write $\pi_1, \pi_2$ for the morphisms $C_0 \to \PP^1$ induced by the
  component maps on $\PP^1 \times \PP^1$. Since $C_0$ is a rational curve,
  $\pi_1^*[\infty] \sim \pi_2^* [\infty]$, and we find that
  \[
  D \sim \psi^* \pi_1^*[\infty] \sim \psi^* \pi_2^*[\infty] \sim K - D,
  \]
  which is a contradiction. Thus $C$ does not lie on a hyperplane in $\PP^3$.

  Finally, we are left with the case where $C$ has no $g^1_3$. If, under the
  canonical embedding, $C$ lies on $xy + zw = 0$, then it would have a $g^1_3$
  coming from either of the rulings on this surface. If instead, $C$ lies on $xy
  + z^2 = 0$, then the lines through the singularity of this quadric cone give
  rise to a pencil of degree-3 divisors on $C$, and hence a $g^1_3$. So $C$ must
  lie on the remaining surface $\{xy + N(z,w) = 0\}$.
\end{proof}

We now return to the case of curves over $\FF_2$. Note that the norm form for
$\FF_4 / \FF_2$ is given by $N(z,w) = z^2 + zw + w^2$. Consequently, any canonically
embedded curve must lie on one of the quadrics $\{xy + zw = 0\}$, $\{xy + z^2 =
0\}$, or $\{xy + z^2 + zw + w^2 = 0\}$. 

\begin{theorem}
  \label{thm:4-3}
$N_2(4,3) = 8$
\end{theorem}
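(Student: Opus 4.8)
The plan is to establish $N_2(4,3) = 8$ by proving both inequalities separately: an upper bound $N_2(4,3) \le 8$ that holds for every genus-$4$ trigonal curve over $\FF_2$, and a matching lower bound exhibited by an explicit curve with exactly $8$ rational points. The trigonal hypothesis tells us we are in case~(1) or case~(2) of Lemma~\ref{lem:gen4}, so the canonical model lies on either the smooth quadric $S = \{xy + zw = 0\} \cong \PP^1 \times \PP^1$ or the quadric cone $\{xy + z^2 = 0\}$. In both cases, the degree-$3$ morphism $\pi : C \to \PP^1$ realizing the gonality comes from a ruling of $S$ (or the pencil of lines through the cone point), so I would count rational points fiber-by-fiber over the three rational points of $\PP^1$.

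The crux of the upper bound is the \textbf{gonality-point inequality}~\eqref{eq:gamma_bound}, which already gives $\#C(\FF_2) \le 3\gamma = 9$. To improve $9$ to $8$, I would look more closely at the fibers of $\pi$. A rational point of $C$ lies over one of the three rational points of $\PP^1$, and each fiber is an effective divisor of degree~$3$ that is Galois-invariant. For the count to reach $9$, every one of the three fibers would have to consist of three distinct rational points. I expect this to be impossible over $\FF_2$ for a smooth genus-$4$ curve: the geometry of $C$ as a $(3,3)$-curve on $\PP^1 \times \PP^1$ (or its analogue on the cone) constrains how the fibers can degenerate, and a parity or incidence obstruction should rule out all three fibers being totally split. \textbf{The hard part will be} making this fiber analysis airtight — showing that one cannot simultaneously have three totally-split fibers — and this is where I would either invoke the explicit structure of the $(3,3)$-equation reduced modulo the fiber coordinate, or argue via the residual intersection with the cubic surface. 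A clean alternative would be to bound $\#C(\FF_2)$ directly using the Weil bound for genus~$4$, namely $\#C(\FF_2) \le 3 + 8\sqrt 2 < 15$, which is too weak; so the combinatorial fiber argument, not Weil, must do the work of pushing $9$ down to $8$.

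For the lower bound, I would \textbf{exhibit an explicit trigonal genus-$4$ curve over $\FF_2$ with $8$ rational points}. The natural place to search is a smooth $(3,3)$-curve on $\{xy + zw = 0\}$, equivalently an intersection of this quadric with a well-chosen cubic surface, or a hyperplane-section description making the $8$ points transparent. I would write down such a model, verify smoothness (so that it genuinely has genus~$4$), confirm via Lemma~\ref{lem:gen4} or Proposition~\ref{prop:gengon} that it is trigonal and not hyperelliptic (so $\gamma = 3$ exactly), and then directly count its $\FF_2$-points to confirm there are~$8$. Combining the two bounds yields $N_2(4,3) = 8$. I anticipate that producing the explicit curve is mostly a finite search that can be carried out by hand or machine, so the genuine mathematical content lies entirely in the upper-bound fiber argument described above.
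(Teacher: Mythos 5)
Your lower bound coincides with the paper's: it writes down a smooth intersection of $\{xy+zw=0\}$ with an explicit cubic surface passing through 8 of the 9 rational points of that quadric, and Lemma~\ref{lem:gen4} certifies that such a curve is trigonal. The genuine gap is in your upper bound, and it is exactly the step you defer: you never produce the ``parity or incidence obstruction'' that is supposed to push 9 down to 8, and no such soft obstruction exists. (The cone case is harmless: $\{xy+z^2=0\}$ has only 7 rational points in $\PP^3(\FF_2)$, so a curve in case (2) of Lemma~\ref{lem:gen4} has at most 7.) So suppose $C$ is a smooth $(3,3)$-curve on $\PP^1\times\PP^1$ with 9 rational points; then, as you observe, $C$ must contain all 9 rational points of $\PP^1\times\PP^1$. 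But the incidence conditions ``vanish at all 9 rational points and contain no rational fiber'' are satisfiable: writing $F=\sum a_{ij}\,m_i(x_0,x_1)\,n_j(y_0,y_1)$ with $m_i,n_j$ the cubic monomials, the conditions force the restriction of $F$ to each rational fiber to equal $y_0y_1(y_0+y_1)$ (resp.\ $x_0x_1(x_0+x_1)$), which pins down every coefficient except a single bit in the middle $2\times2$ block; there are exactly two such forms, interchanged by $x_0\leftrightarrow x_1$. Hence no counting or parity argument can rule out 9 points. What is true---and what your argument still owes---is that each of these two forms defines a \emph{singular} curve; for instance the one with $a_{11}=1$ is singular at the point $\bigl((\omega:1),(1:\omega)\bigr)$ with $\omega^2+\omega+1=0$. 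Without that verification (or something equivalent), your upper bound is an expectation, not a proof.

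The paper takes a different and much shorter route for the upper bound: it cites Serre's theorem that $N_2(4)\le 8$, i.e.\ \emph{no} genus-4 curve over $\FF_2$, of any gonality, has more than 8 rational points, so a fortiori $N_2(4,3)\le 8$. Your remark that ``Weil is too weak, so the fiber argument must do the work'' overlooks this option: Serre's refinement of the Weil bound is precisely the published result that does the job. If you carry out the enumeration-plus-singularity check sketched above, you would obtain a self-contained, elementary proof of the trigonal upper bound that the paper's citation-based argument does not provide; as written, however, the crucial step of your proposal is missing.
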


\begin{proof}
  Consider the genus-4 curve $C \subset \PP^3_{\FF_2}$ described by the
  following equations:
  \begin{eqnarray*}
    xy + zw &=& 0\\
    xy^2 + y^3 + x^2z + y^2z + xz^2 + x^2w + y^2w + xw^2 &=& 0.
  \end{eqnarray*}
  One verifies that $C$ passes through 8 of the nine points on the quadric
  surface $xy + zw = 0$. Lemma~\ref{lem:gen4} shows that $C$ has gonality~3, 
  so $N_2(4,3) \geq 8$. Serre has shown that $N_2(4) \le 8$
  \cite{Serre_points_on_curves_1983}. In particular, $N_2(4,3) \leq 8$.
\end{proof}

\begin{theorem}
  \label{thm:4-4}
$N_2(4,4) = 5$.
\end{theorem}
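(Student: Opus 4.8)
The plan is to pin $N_2(4,4)$ from both sides, and the upper bound does almost all of the work for free. A curve $C$ of genus~4 and gonality~4 is neither hyperelliptic nor trigonal, so Lemma~\ref{lem:gen4}(3) places its canonical image on the quadric $S = \{xy + z^2 + zw + w^2 = 0\}$, whose $(z,w)$-part is the anisotropic norm form of $\FF_4/\FF_2$. Every rational point of $C$ is a rational point of $S$, so $\#C(\FF_2) \le \#S(\FF_2)$, and I would simply count the latter. Since $z^2 + zw + w^2$ vanishes only at $(z,w)=(0,0)$, a point of $S(\FF_2)$ either has $(z,w)=(0,0)$---forcing $xy=0$ and giving $[1:0:0:0]$ and $[0:1:0:0]$---or has $z^2+zw+w^2 = 1$, forcing $xy = 1$ and hence $x=y=1$, which gives $[1:1:1:0]$, $[1:1:0:1]$, and $[1:1:1:1]$. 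Thus $\#S(\FF_2) = 5$ and $N_2(4,4) \le 5$.

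For the matching lower bound I would exhibit a smooth complete intersection $C = S \cap \{F=0\}$, where $F$ is a cubic chosen to vanish at all five points of $S(\FF_2)$. The space of cubics on $\PP^3$ is $20$-dimensional over $\FF_2$, and vanishing at five points imposes only five linear conditions, so there is ample freedom; a finite search produces an $F$ for which the intersection is smooth. (The quadric $S$ is itself smooth, as its gradient $(y,x,w,z)$ vanishes only at the origin.) A smooth $(2,3)$ complete intersection in $\PP^3$ has genus~4 with the hyperplane class as canonical class, so $C$ is canonically embedded and non-hyperelliptic; because it lies on the anisotropic quadric $S$, Lemma~\ref{lem:gen4} shows it carries no $g^1_3$ and is thus non-trigonal. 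Since $C$ has a rational point, Proposition~\ref{prop:genus4_gonality4} forces its gonality to be exactly~4 rather than~5. Passing through all five points of $S$, this curve attains $\#C(\FF_2) = 5$, so $N_2(4,4) \ge 5$.

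The entire conceptual content sits in the upper bound, where the accidental smallness of $\#S(\FF_2)$ hands us the answer. The only genuine labor is in the lower bound: checking that some explicit cubic through the five points cuts out a smooth curve. This is a routine but unavoidable finite verification, and it is the single step I expect to demand care rather than insight.
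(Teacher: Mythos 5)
Your proposal is correct and takes essentially the same route as the paper: the upper bound via Lemma~\ref{lem:gen4}(3) together with the count $\#S(\FF_2)=5$ for the anisotropic quadric $S=\{xy+z^2+zw+w^2=0\}$, and the lower bound via a smooth $(2,3)$ complete intersection on $S$ through all five rational points, with the gonality pinned between~4 and~4 using Lemma~\ref{lem:gen4} and the existence of a rational point. The one step you defer to a finite search---an explicit cubic making the intersection smooth---is exactly what the paper supplies, namely the curve cut out by $xy+z^2+zw+w^2=0$ and $xy^2+x^2z+y^2z+yz^2+x^2w+z^2w=0$.
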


\begin{proof}
  We begin with the genus-4 curve $C_{/\FF_2}$ in $\PP^3$ given by the equations
  \begin{eqnarray*}
    xy + z^2 + zw + w^2 &=& 0  \\
    xy^2 + x^2z + y^2z + yz^2 + x^2w + z^2w &=& 0.
  \end{eqnarray*}
  A quick calculation shows that the cubic form vanishes at all five rational
  points of the quadratic form. Thus $\#C(\FF_2) = 5$.  Lemma~\ref{lem:gen4}
  shows that it does not admit a morphism to $\PP^1$ of degree~3. Thus, its
  gonality must be at least~4.  On the other hand, Proposition~\ref{prop:gengon}
  shows that, since it has a rational point, its gonality is at most~4. This
  proves $N_2(4,4) \geq 5$.

  For the reverse inequality, observe that every non-hyperelliptic genus-4 curve
  can be embedded in $\PP^3$ using the canonical linear system. By
  Lemma~\ref{lem:gen4}, we may assume it lies on the quadric surface $xy + z^2 +
  wz + w^2 = 0$ (else it would admit a degree-3 map to the projective
  line). This surface has only 5 rational points, so $N_2(4,4) \leq 5$.
\end{proof}

\begin{theorem}
 \label{thm:4-5}
 $N_2(4,5) = 0$.
\end{theorem}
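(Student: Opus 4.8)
The plan is to read the value $0$ as the conjunction of two statements: no genus-$4$ gonality-$5$ curve over $\FF_2$ can carry a rational point, and at least one such curve exists. The upper bound $N_2(4,5) \le 0$ is immediate from Proposition~\ref{prop:gengon}(3): a curve of genus $g = 4 \ge 2$ with $C(\FF_2) \ne \varnothing$ has gonality at most~$4$, so any gonality-$5$ curve satisfies $\#C(\FF_2) = 0$. All the real work therefore goes into exhibiting a single curve of genus~$4$ and gonality~$5$; its existence upgrades the supremum from $-\infty$ to $0$.

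Following the paradigm of Theorems~\ref{thm:4-3} and~\ref{thm:4-4}, I would search for the curve on the norm quadric $S = \{xy + z^2 + zw + w^2 = 0\}$. By Lemma~\ref{lem:gen4}, any canonically embedded genus-$4$ curve lying on $S$ is automatically non-trigonal, and a smooth complete intersection of $S$ with a cubic surface $\{F = 0\}$ is a degree-$6$ curve of genus~$4$ whose hyperplane class is canonical, hence non-hyperelliptic. With a non-hyperelliptic, non-trigonal curve in hand, Proposition~\ref{prop:genus4_gonality4} governs the gonality: $C$ has gonality~$4$ exactly when $C(\FF_4) \ne \varnothing$ or some degree-$4$ point of $C$ lies on a hyperplane, and gonality~$5$ otherwise.

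The key simplification I would make is that the single condition $C(\FF_4) = \varnothing$ already forces gonality~$5$. Indeed, if a degree-$4$ point with divisor $D$ lay on a hyperplane, then Lemma~\ref{lem:lingon} and Riemann-Roch would give $|K - D| \ne \varnothing$ with $\deg(K - D) = 6 - 4 = 2$. This effective degree-$2$ divisor is defined over $\FF_2$ (non-hyperellipticity forces $\dim |K-D| = 0$, so it is the unique, hence Galois-invariant, member of its class), and it is supported on closed points of degree~$1$ or~$2$, each of which produces an $\FF_4$-rational point. So $C(\FF_4) = \varnothing$ rules out the degree-$4$-point alternative and, trivially, the condition $C(\FF_4) \ne \varnothing$, leaving gonality~$5$. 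The problem thus reduces to producing a smooth $C = S \cap \{F = 0\}$ with no $\FF_4$-points.

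The main obstacle is this explicit construction. Over $\FF_4$ the norm form $z^2 + zw + w^2$ becomes isotropic, so $S$ is the split quadric with $(4+1)^2 = 25$ points over $\FF_4$; the cubic $F$ must avoid all $25$ of them while keeping $C = S \cap \{F=0\}$ smooth over $\FF_2$. I expect to locate a suitable $F$ by a finite computer search over cubic forms, then certify the two required properties directly: smoothness via the Jacobian criterion on the complete intersection, and emptiness of $C(\FF_4)$ by checking that $F$ vanishes at none of the $25$ points of $S(\FF_4)$. Once such explicit equations are displayed and verified, non-hyperellipticity and non-trigonality are automatic, the reduction above yields gonality~$5$, and combining with the upper bound gives $N_2(4,5) = 0$.
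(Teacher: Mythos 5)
Your overall architecture matches the paper's: the upper bound is the same appeal to Proposition~\ref{prop:gengon}(3), and the lower bound comes from exhibiting a pointless curve cut out on the norm quadric $\{xy + z^2 + zw + w^2 = 0\}$ by a cubic, with non-hyperellipticity and non-trigonality read off from the canonical embedding and Lemma~\ref{lem:gen4}. Where you genuinely depart from (and improve on) the paper is the gonality certificate. The paper verifies gonality~5 for its curve by computing $\#C(\FF_4) = 0$ \emph{and} $\#C(\FF_{16}) = 4$, then checking that the quartic point $P = (1 : t^3 : t+1 : t^2+t)$ has $\FF_2$-linearly independent coordinates, so that no hyperplane contains its orbit, as demanded by Proposition~\ref{prop:genus4_gonality4}. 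Your observation that the second disjunct of that proposition is redundant for non-hyperelliptic curves is correct: if the orbit divisor $D$ of a quartic point lies on a hyperplane, then $|K-D|$ is nonempty of degree~$2$; non-hyperellipticity forces $\dim|K-D| = 0$, so its unique member is Galois-invariant, and its support produces a closed point of degree~$1$ or~$2$, contradicting $C(\FF_4) = \varnothing$. Thus for non-hyperelliptic, non-trigonal genus-4 curves, gonality~$4$ is equivalent to $C(\FF_{q^2}) \ne \varnothing$, and your verification burden (check that $F$ avoids the $25$ points of $S(\FF_4)$) is strictly lighter than the paper's check over $\FF_{16}$.

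The one thing your proposal does not do is produce the curve: you reduce the theorem to the existence of a cubic $F$ with $S \cap \{F=0\}$ smooth and avoiding $S(\FF_4)$, and only \emph{expect} a finite search to find one. As written this is a gap, since nothing in your argument rules out the search coming back empty, and the lower bound $N_2(4,5) \ge 0$ stands or falls with that existence. The search does succeed: the paper's own cubic, $F = x^3 + y^3 + z^3 + y^2w + xzw$, is a witness (the paper records $\#C(\FF_4) = 0$ for this complete intersection), and your simplified criterion applied to that $F$ finishes the proof without any computation over $\FF_{16}$.
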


\begin{proof}
  Consider the curve $C_{/\FF_2}$ in $\PP^3$ cut out by the equations
  \begin{eqnarray*}
    xy + z^2 + zw + w^2 &=& 0  \\
    x^3 + y^3 + z^3 + y^2w + xzw &=& 0.
  \end{eqnarray*}
  They define a curve of genus 4, and by Lemma~\ref{lem:gen4}, $C$ has gonality
  at least~4.
  
  Direct search using the above equations shows that $\#C(\FF_4) =
  0$. Corollary~\ref{cor:genus4_gonality4} shows that $C$ has no $g^1_4$, and
  hence gonality~5. That is, $N_2(4,5) \geq 0$.  

  For the reverse inequality, we recall that any curve of genus~4 with a
  rational point has gonality at most 4. It follows that a curve of gonality~5
  has no rational point, and $N_2(4,5) \leq 0$. 
\end{proof}

\section{Curves of genus~5}
\label{sec:genus5}

According to Proposition~\ref{prop:gengon}, a curve of genus 5 can have gonality
$\gamma$ satisfying $2 \leq \gamma \leq 6$. The hyperelliptic case is taken care
of by Theorem~\ref{thm:hyperelliptic}: $N_2(5,2) = 6$. A trigonal curve of genus
5 can be expressed as a singular plane quintic; we use this fact to compute
$N_2(5,3)$ in the next subsection.  Non-trigonal curves of genus~5 over an
algebraically closed field are well understood, and with some additional effort
we are able to extend this description to arbitrary fields. (This issue is
touched upon in \cite[p.36]{Castryck_Tuitman}, but they abdicate responsibility
in the case of characteristic~2.) We give a general discussion in
\S\ref{sec:genus5_gen}, and we use this theory to execute the calculation of
$N_2(5,4)$, $N_2(5,5)$, and $N_2(5,6)$ in \S\ref{sec:5-4} and
\S\ref{sec:genus5_calc}.


\subsection{Trigonal curves of genus 5}

Here is a useful fact about trigonal curves; see
\cite[\S2.2]{Kudo-Harashita_trigonal}. Note that this is incorrectly stated in
Exercise~IV.5.5 of \cite{Hartshorne_Bible}: the cuspidal case can actually
occur.

\begin{lemma}
  \label{lem:triggenus5}
  Every trigonal curve $C$ of genus~5 over a field $k$ is birational to a plane
  quintic $C' \subset \PP^2_k$ with a unique $k$-rational singularity of
  multiplicity~2. After moving the singularity to $(0:0:1) \in \PP^2(k)$, a
  homogeneous equation for $C'$ can be given by an irreducible polynomial
  \[
  f(x,y,z) = f_2(x,y)z^3 + g(x,y,z),
  \]
  where $f_2$ is a quadratic form, $g$ is a quintic form that vanishes
  to order~3 at $(0:0:1)$, and one of the following holds:
  \begin{itemize}
    \item Cusp: $f_2(x,y) = x^2$ and $g$ has nonzero coefficient on the $y^3z^2$-term;
    \item Split node: $f_2(x,y) = xy$; 
    \item Nonsplit node: $f_2(x,y)$ is irreducible over $k$.
  \end{itemize}
  Conversely, the normalization of any quintic curve in $\PP^2$ satisfying the
  above conditions is a trigonal curve of genus~5. 
\end{lemma}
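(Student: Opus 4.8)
The plan is to produce the plane quintic as the image of $C$ under the linear system residual to the $g^1_3$. Fix a $g^1_3$, say $|D|$ with $\deg D = 3$; since a trigonal curve of genus~5 is not hyperelliptic, $|D|$ is base-point-free. By Riemann-Roch, $\deg(K-D) = 8-3 = 5$ and $\dim|K-D| = 2$, so $|K-D|$ is a $g^2_5$. First I would verify that this system is base-point-free: a base point would yield a net of degree $\le 4$, and after removing base points the associated morphism would be birational onto a plane curve of too-small arithmetic genus, or would exhibit $C$ as hyperelliptic or rational; each possibility contradicts $C$ being trigonal of genus~5. Thus $|K-D|$ defines a morphism $\phi : C \to \PP^2$ whose image $C'$ is non-degenerate.

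Next I would show $\phi$ is birational onto a quintic and locate the singularity. Writing $e = \deg C'$ and $d$ for the degree of $\phi$ onto its image, we have $de = 5$; non-degeneracy gives $e \ge 2$, and since $5$ is prime this forces $d = 1$, $e = 5$. So $\phi$ is the normalization of a plane quintic $C'$. A plane quintic has arithmetic genus $\binom{4}{2} = 6$, whence $\sum_Q \delta_Q = 6 - 5 = 1$; therefore $C'$ has a single singular point $Q$ with $\delta_Q = 1$, and the bound $\delta_Q \ge \binom{m_Q}{2}$ forces the multiplicity $m_Q = 2$. As $\phi$ and $C'$ are defined over $k$ and $Q$ is the unique geometric singularity, $Q$ is Galois-stable, hence $k$-rational. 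Moving $Q$ to $(0:0:1)$ and expanding the defining form $F$ in powers of $z$ gives $F = f_2(x,y)z^3 + g(x,y,z)$, where $f_2$ is the tangent cone at $Q$ and $g$ collects the terms vanishing to order $\ge 3$ at $Q$; this is the asserted shape.

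The trichotomy I would obtain by classifying the binary quadratic form $f_2$ over $k$ as in Theorem~\ref{thm:classification}: up to $k$-linear change of coordinates $f_2$ is $xy$ (split, two rational branches), an irreducible form (nonsplit, two conjugate branches exchanged by Galois), or $x^2$ (a double tangent line). The first two are nodes and have $\delta_Q = 1$ automatically. The case $f_2 = x^2$ is the delicate one and is the main obstacle: in characteristic~2 one cannot complete the square to absorb the double tangent, so a genuine cusp $x^2 = y^3$ (with $\delta_Q = 1$) occurs, contrary to the claim of \cite[Ex.~IV.5.5]{Hartshorne_Bible}. Here I would compute $\delta_Q$ directly from the local equation $x^2 + f_3(x,y) + \cdots = 0$: the singularity has $\delta_Q = 1$ exactly when the coefficient of $y^3$ in $f_3$ is nonzero (local type $x^2 = y^3$), whereas its vanishing produces $x^2 + (\text{square}) = (x+\cdots)^2$ to higher order and hence $\delta_Q \ge 2$, incompatible with genus~5. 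Translated back to $F$, the nonvanishing of this coefficient is precisely the stated condition on the $y^3z^2$-term of $g$.

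For the converse I would run the genus computation in reverse. Starting from an irreducible quintic of the prescribed shape, the tangent cone together with the accompanying condition forces $\delta_Q = 1$ at $(0:0:1)$; checking that $C'$ is smooth away from $Q$, the normalization $\tilde C$ has genus $6 - 1 = 5$. Trigonality is then geometric: the pencil of lines through $Q$ meets $C'$ in the double point $Q$ plus a residual degree-$3$ divisor, and pulling back along $\phi$ exhibits a base-point-free pencil of degree~$3$ on $\tilde C$, that is, a $g^1_3$. Hence $\tilde C$ is a trigonal curve of genus~5, completing the equivalence.
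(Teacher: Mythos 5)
Your strategy is the standard one and is essentially the argument the paper itself leans on: the paper's ``proof'' of this lemma is a two-line citation to \cite[Prop.~2.2]{Kudo-Harashita_trigonal}, which runs exactly your Riemann--Roch argument through the residual system $|K-D|$ --- a base-point-free $g^2_5$ mapping $C$ birationally onto a plane quintic with a single double point, the trichotomy being read off from the tangent cone. Your forward direction (base-point-freeness via the non-existence of low-degree nets, $de=5$ with $e\ge 2$ forcing birationality onto a quintic, $p_a - p_g = 1$ forcing a unique multiplicity-2 point, and the blow-up/$\delta$-invariant criterion identifying the admissible cusps) is sound and supplies the details the paper omits.

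Two blemishes, one of which is a genuine (if small) gap. First, in the converse you conclude that $\tilde C$ is \emph{trigonal} from the existence of a $g^1_3$. This paper defines trigonal as gonality exactly~3 --- the introduction explicitly warns against the weaker usage ``admits a morphism to $\PP^1$ of degree~3'' --- so you must also rule out that the normalization is hyperelliptic. The fix is short: if $\tilde C$ admitted maps to $\PP^1$ of degrees 2 and 3, the product map would send $\tilde C$ birationally onto a curve of bidegree $(2,3)$ in $\PP^1 \times \PP^1$ (birationally, since the degree of the map onto the image divides $\gcd(2,3)=1$), and such a curve has arithmetic genus $(2-1)(3-1)=2 < 5$, a contradiction. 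Second, your framing of the cuspidal case as a characteristic-2 phenomenon is off: cusps occur in every characteristic, which is precisely the paper's point contra Exercise~IV.5.5 of \cite{Hartshorne_Bible}. Your actual criterion --- $\delta_Q = 1$ if and only if the $y^3z^2$-coefficient is nonzero --- is correct in all characteristics; only the verification that $\delta_Q \ge 2$ when that coefficient vanishes splits by characteristic (complete the square when $2$ is invertible, a blow-up computation when it is not), and your sketch of that step should be stated accordingly. Neither issue disturbs the architecture of the proof.
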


\begin{proof}
  This is a consequence of the theorems of Riemann-Roch and
  Clifford. See \cite[Prop.~2.2]{Kudo-Harashita_trigonal} for the case of
  odd characteristic; the even characteristic case is virtually identical. 
\end{proof}

\begin{theorem}
  \label{thm:5-3}
$N_2(5,3) = 8$.
\end{theorem}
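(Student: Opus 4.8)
The plan is to prove the upper bound $N_2(5,3) \le 8$ directly, and to establish the matching lower bound by exhibiting an explicit curve. For the upper bound I would simply invoke the preceding proposition with $q = 2$: any trigonal curve of genus~5 over $\FF_2$ has at most $q^2 + q + 2 = 8$ rational points.

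For the lower bound I would produce a plane quintic realizing equality in that count. By Lemma~\ref{lem:triggenus5}, a trigonal genus-5 curve arises as the normalization of an irreducible plane quintic with a single rational double point; to reach $8 = q^2 + q + 2$ points, the proof of the preceding proposition shows we must be in the \emph{split node} case and the quintic must pass through every rational point of $\PP^2(\FF_2)$ other than the node. Accordingly, I would place the node at $(0:0:1)$ and take a homogeneous equation of the shape $f = xy\,z^3 + g(x,y,z)$, with $g$ a quintic form vanishing to order~3 at $(0:0:1)$, as prescribed by the lemma in the split-node case $f_2 = xy$. Imposing that $f$ vanish at the six points $(1:0:0)$, $(0:1:0)$, $(1:1:0)$, $(1:0:1)$, $(0:1:1)$, $(1:1:1)$ yields a small system of linear conditions on the coefficients of $g$ over $\FF_2$, which is readily solved.

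Having fixed such an $f$, I would verify the two hypotheses of the converse direction of Lemma~\ref{lem:triggenus5}: that $f$ is irreducible over $\FF_2$, and that $(0:0:1)$ is its \emph{only} singular point, a double point with tangent cone $xy$ and hence a split node with two rational branches. The normalization $C$ is then a trigonal curve of genus~5. Counting rational points, the six smooth rational points of the quintic lift to six rational points on $C$, and the split node contributes two more, for a total of~$8$. This gives $N_2(5,3) \ge 8$, and combined with the upper bound, $N_2(5,3) = 8$.

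The main obstacle is this final verification step. Imposing six point-conditions together with the prescribed node is a large number of constraints on a quintic, and the danger is that every admissible solution acquires an extra singularity, forcing the geometric genus below~5, or else becomes reducible. The substance of the argument is therefore to confirm that at least one admissible $f$ has the node as its unique singularity and is irreducible over $\FF_2$ --- a finite check that I would carry out for a single explicit choice of coefficients rather than attempt to settle in general.
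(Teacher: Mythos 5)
Your proposal follows essentially the same route as the paper: the upper bound is exactly the preceding proposition with $q=2$, and the lower bound comes from an explicit irreducible plane quintic with a split node at $(0:0:1)$ passing through the remaining six rational points of $\PP^2(\FF_2)$, certified to be trigonal of genus~5 by the converse part of Lemma~\ref{lem:triggenus5}. The paper completes the finite check you defer by exhibiting the quintic $xyz^3 + x^3z^2 + y^3z^2 + x^4z + xy^3z + y^4z + x^4y + x^2y^3 = 0$, whose only singularity is the node at $(0:0:1)$ and which passes through all seven rational points of the plane, so your plan is sound as stated.
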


\begin{proof}
  Consider the singular curve $C' \subset \PP^2 = \Proj \FF_2[x,y,z]$ given by 
  \[
    xyz^3 + x^3z^2 + y^3z^2 + x^4z + xy^3z + y^4z + x^4 y + x^2 y^3 = 0.
  \]
One verifies that its only singularity is the node at $(0:0:1)$ and that it
passes through all $7$ rational points of the plane. After blowing up the node,
we obtain a curve $C$ of genus~5 with 8 rational points. By the converse part of
Lemma~\ref{lem:triggenus5}, $C$ is trigonal, so that $N_2(5,3) \geq 8$.

For the reverse inequality, let $C_{/\FF_2}$ be a trigonal curve of genus~5, and
let $C'$ be a quintic in $\PP^2$ that is birational to $C$. Without loss, we may
move the unique singularity of $C'$ to (0:0:1. Let $\pi : C \to C'$ be the
normalization morphism. Since $\#\PP^2(\FF_2) = 7$, it follows that the number
of rational points on $C$ is at most $6$ plus the number of rational points on
$\pi^{-1}(0:0:1)$. Blowing up the singularity at the origin, we see that
$\pi^{-1}(0:0:1)$ contains two rational points in the split nodal case, no
rational point in the nonsplit nodal case, and a unique rational point in the
cuspidal case. In particular, $N_2(5,3) \le 8$.
\end{proof}

\subsection{The canonical embedding of genus-5 curves}
\label{sec:genus5_gen}

Throughout this section, $k$ denotes a perfect field with algebraic closure
$\bar k$. Given a variety $Y$, we write $\bar Y$ for the base extension $Y
\times_{\Spec k} \Spec \bar k$. Our goal for this section is to prove the
following result:

\begin{theorem}
  \label{thm:genus5_nontrig}
  Suppose $C_{/k}$ has genus~5 and is not hyperelliptic. Identify $C$
  with its image in $\PP^4$ under a canonical embedding. The $k$-vector space of
  global sections of $\OO(2)$ (i.e., quadratic forms) that vanish on $C$ is
  3-dimensional. Write $\cQ$ for the corresponding 2-dimensional space of
  quadrics in $\PP^4$.
  \begin{itemize}
  \item If $C$ is trigonal, then the intersection of all quadrics in $\cQ$ is an
    irreducible ruled surface that is $k$-isomorphic to the blowing up of $\PP^2$
    at a rational point.
  \item If $C$ is not trigonal, then the intersection of the quadrics in $\cQ$
      is $C$. 
  \end{itemize}
\end{theorem}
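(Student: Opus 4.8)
The plan is to reduce the whole statement to the classical picture over $\bar k$ and then descend, treating the dimension count first, importing the geometry over $\bar k$, and finally carrying out the two descents. The trigonal descent is where the real work lies.

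\emph{The dimension count.} Under the canonical embedding, $H^0(\PP^4,\OO(1)) \cong H^0(C,K)$, so the space of quadratic forms $H^0(\PP^4,\OO(2)) \cong \mathrm{Sym}^2 H^0(C,K)$ has dimension $\binom{6}{2} = 15$. Restriction to $C$ is the multiplication map onto $H^0(C,\OO_C(2)) = H^0(C,2K)$, and since $\deg 2K = 16 > 2g-2$, Riemann--Roch gives $\dim H^0(C,2K) = 16 - 5 + 1 = 12$. Max Noether's theorem, valid for any non-hyperelliptic curve over any field, shows this map is surjective, so the space of quadrics through $C$ is $15 - 12 = 3$-dimensional; this is the net $\cQ$. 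All of these dimensions are computed by cohomology that commutes with the base change $k \to \bar k$, so the count holds over $k$ exactly as over $\bar k$.

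\emph{The geometry over $\bar k$ and the non-trigonal case.} By Enriques--Babbage--Petri, a non-hyperelliptic canonical curve of genus $g \geq 4$ has its homogeneous ideal generated by quadrics unless it is trigonal or a plane quintic; genus-$5$ curves are never plane quintics, so in the non-trigonal case $\bigcap \bar{\cQ} = \bar C$. Descent is then immediate: $X := \bigcap \cQ$ is a closed subscheme of $\PP^4_k$ defined over $k$ with $\bar X = \bar C$, and a closed subscheme of $\PP^4_k$ is determined by its base change to $\bar k$, so $X = C$. In the trigonal case the quadrics instead cut out the rational normal scroll $\bar S$ swept out by the linear spans of the fibers of the $g^1_3$. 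I would identify $\bar S$ as the \emph{smooth} cubic scroll $S(1,2)$, ruling out the cone $S(0,3)$ as follows: writing $B$ for the directrix on the Hirzebruch surface $\mathbf{F}_n$ (with $n = b-a$) and imposing $p_a(C) = 5$ on $C \sim 3B + cF$ forces $n \in \{1,3\}$, but $n = 3$ gives $C \cdot B = -1 < 0$, impossible for the irreducible curve $C \neq B$. Hence $n = 1$ and $\bar S \cong \mathbf{F}_1$, the blow-up of $\bar{\PP}^2$ at a point.

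\emph{The trigonal descent (the \textbf{main obstacle}).} Here $S := \bigcap \cQ$ is a $k$-form of $\mathbf{F}_1$, and the content is to show it is the split form with a contraction to $\PP^2$ defined over $k$ and a $k$-rational center. Since $C$ is trigonal over $k$ it carries a $k$-rational $g^1_3$, say $|D|$; for genus $5$ this $g^1_3$ is unique, hence Galois-stable, and $|D| \cong \PP^1_k$, so the ruling exhibits a morphism $S \to \PP^1_k$ whose fibers are the spans of the divisors in $|D|$. The minimal directrix of $\bar S$, being the unique $(-1)$-curve, is Galois-stable and is a line in $\PP^4$; it is therefore defined over $k$ and isomorphic to $\PP^1_k$, and it meets each ruling line once, so it is a section of $S \to \PP^1_k$. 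A $\PP^1$-bundle over $\PP^1_k$ admitting a section is Zariski-locally trivial, hence $\PP(\OO \oplus \OO(d))$ by Grothendieck's splitting theorem, and the geometric type pins down $|d| = 1$, giving $S \cong \mathbf{F}_1 = \PP(\OO \oplus \OO(-1))$ over $k$; the linear system $|\OO_S(1)|$ then realizes the $k$-morphism $S \to \PP^2_k$ contracting the directrix to a $k$-rational point $p$, so $S \cong \mathrm{Bl}_p \PP^2_k$. The crux, and the place where descent is essential, is exactly this last step: identifying the form of $\mathbf{F}_1$ as split and the blow-down center as genuinely $k$-rational rather than merely a geometric point. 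I accomplish this by producing the rational directrix and using it as a section, which sidesteps computing a Brauer class directly; an alternative is to contract the $(-1)$-curve via Castelnuovo to a Severi--Brauer surface and observe that the image of the directrix furnishes a rational point, forcing it to be $\PP^2_k$ by Châtelet's theorem.
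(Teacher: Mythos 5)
Your dimension count is correct and matches the paper's. Your trigonal case is also correct, and it takes a genuinely different route: you consume the $k$-rational $g^1_3$ (available by hypothesis) to define the ruling $S \to \PP^1_k$ over $k$, descend the unique $(-1)$-curve to get a section, and finish with Grothendieck's splitting theorem and the blow-down; the paper instead proves a twist lemma (Lemma~\ref{lem:twist}): the blow-up of $\PP^2_k$ at a rational point has trivial $H^1(k,\Aut)$, seen by identifying the automorphism group with the stabilizer of a point in $\PGL_3(\bar k)$ and applying Hilbert~90. The structural difference matters: the paper's descent needs only that $\bar C$ is \emph{geometrically} trigonal, whereas your argument cannot even begin without a $g^1_3$ defined over $k$. (A minor quibble: ``Max Noether's theorem over any field'' should be Noether/Saint-Donat over $\bar k$ followed by descent of surjectivity under base change, which is in effect what you do.)

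The genuine gap is in the non-trigonal case. Enriques--Babbage--Petri (Saint-Donat in positive characteristic) is a theorem over $\bar k$: to conclude $\bigcap \bar\cQ = \bar C$ you must know that $\bar C$ is non-trigonal over $\bar k$, but your hypothesis is only that $C$ is non-trigonal over $k$. The implication ``not $k$-trigonal $\Rightarrow$ not geometrically trigonal'' is precisely Corollary~\ref{cor:perfect_trigonal}, which the paper deduces \emph{from} this theorem; it cannot be cited as an input, and it is not formal: the unique $g^1_3$ on $\bar C$ gives only a Galois-invariant divisor \emph{class}, and over a field of cohomological dimension greater than~1 a Galois-invariant class need not contain a $k$-rational divisor, so geometric trigonality does not descend for free (this is exactly the novelty the paper claims). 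Your own trigonal machinery cannot be run to produce a contradiction here, because it requires a $k$-rational $g^1_3$ as its starting point. The repair stays within your toolkit: if $\bar C$ were trigonal, then $\bar Y \cong \bF_1$ where $Y := \bigcap \cQ$; the unique $(-1)$-curve is Galois-stable, hence descends to a line $B \subset Y$ with $B \cong \PP^1_k$, and the map sending a point of $\bar Y$ to the intersection with $\bar B$ of the unique ruling line through it is Galois-equivariant, hence defines a $k$-morphism $Y \to B \cong \PP^1_k$; restricting to $C$ yields a $k$-rational $g^1_3$, contradicting the hypothesis. (Alternatively, invoke the paper's Lemma~\ref{lem:twist}, which needs no rational $g^1_3$.) As written, however, your proposal silently assumes the equivalence of trigonality and geometric trigonality, which is the hardest part of the statement being proved.
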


This result is well known when $k$ is algebraically closed and can be assembled
from the work of Max Noether and Enriques/Babbage/Petri in characteristic zero
\cite[III.3]{Geom_of_Alg_Curves} and Saint-Donat in positive characteristic
\cite{Saint-Donat_Petri}. Since the space of quadrics $\cQ$ is defined over $k$,
we obtain the following interesting consequence:

\begin{corollary}
  \label{cor:perfect_trigonal}
  If $C$ is a genus-5 non-hyperelliptic curve over a perfect field $k$, then $C$
  is trigonal if and only if it is geometrically trigonal. 
\end{corollary}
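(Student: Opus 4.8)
The plan is to let Theorem~\ref{thm:genus5_nontrig} do the geometric work: trigonality is detected by the dimension of the intersection $X := \bigcap_{Q \in \cQ} Q \subseteq \PP^4$, which equals $1$ (the curve $C$) when $C$ is non-trigonal and jumps to $2$ (a ruled surface) when $C$ is trigonal. Since $\cQ$ is defined over $k$, this dimension is unchanged by passing to $\bar k$, and the corollary falls out.

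I would first treat the easy implication. If $C$ is trigonal over $k$, a degree-$3$ morphism $C \to \PP^1$ extends by base change to a degree-$3$ morphism $\bar C \to \PP^1_{\bar k}$, so $\bar C$ has gonality at most~$3$. To rule out that the gonality drops to~$2$, I note that non-hyperellipticity is geometric: for $g \ge 3$ a non-hyperelliptic curve is canonically embedded, and a closed immersion stays a closed immersion after base change, so $\bar C$ is again non-hyperelliptic; being of genus~$5$ it is not rational either. Hence $\bar C$ is trigonal.

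For the converse, suppose $\bar C$ is trigonal. Then $\bar C$ is non-hyperelliptic (its gonality is~$3$) and $C$ is non-hyperelliptic by hypothesis, so Theorem~\ref{thm:genus5_nontrig} applies to both $C/k$ and $\bar C/\bar k$. The space of quadratic forms vanishing on $C$ is the kernel of the restriction map $H^0(\PP^4,\OO(2)) \to H^0(C,\OO_C(2))$; because $-\otimes_k \bar k$ is exact and commutes with taking global sections of coherent sheaves on proper $k$-schemes, this kernel base-extends to the corresponding space for $\bar C$. Thus $\bar \cQ = \cQ \otimes_k \bar k$, and the base extension of $X = \bigcap_{Q \in \cQ} Q$ is $\overline{X} = \bigcap_{Q \in \bar \cQ} Q$. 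The trigonal case of the theorem, applied to $\bar C$, shows $\overline{X}$ is an irreducible ruled surface, so $\dim \overline{X} = 2$. Since dimension is preserved under extension of the base field, $\dim X = 2$ as well. But if $C$ were not trigonal, the non-trigonal case of the theorem applied to $C/k$ would give $X = C$, a curve of dimension~$1$. This contradiction forces $C$ to be trigonal.

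The argument is short precisely because all of the hard geometry is packaged into Theorem~\ref{thm:genus5_nontrig}; what remains are base-change bookkeeping steps. The one deserving the most care is the compatibility $\bar\cQ = \cQ \otimes_k \bar k$, since it is exactly here that we use that the net of quadrics is defined over $k$ and not merely over $\bar k$---this is what lets the dimension count performed over $\bar k$ descend to a statement about $X$ over $k$. (Perfectness of $k$ enters only through the hypotheses of the theorem we invoke.)
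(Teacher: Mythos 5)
Your proof is correct and is essentially the paper's argument: the paper deduces the corollary from Theorem~\ref{thm:genus5_nontrig} precisely because the net of quadrics $\cQ$ is defined over $k$, and your flat-base-change bookkeeping plus the dimension comparison (ruled surface versus curve, applied over $k$ and over $\bar k$) is exactly the content of that one-line deduction. The only quibble is your blanket claim that non-hyperellipticity is geometric --- with the paper's $k$-rational definition of hyperelliptic this can fail over a general perfect field, though the paper itself operates under the same implicit convention --- but it is harmless here, since in your easy direction the trigonality of $C$ already forces $\bar C$ to be non-hyperelliptic (a genus-5 curve cannot carry both a $g^1_2$ and a $g^1_3$).
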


When $k = \FF_q$, the corollary follows easily from the fact that a $g^1_3$ on a
genus-5 curve is unique and that every Galois invariant divisor class over $\bar
\FF_q$ admits an $\FF_q$-rational divisor. (See, e.g.,
\cite[Rem.~2.4]{Castryck_Tuitman} or \cite[Lem.~6.5.3]{Gille_Szamuely}.) To the
best of our knowledge, we are the first to extend this to arbitrary perfect
fields.

We expend the remainder of this section in proving
Theorem~\ref{thm:genus5_nontrig}.

Let $\cI$ be the homogeneous ideal sheaf of $C$ in $\PP^4$, and write $\iota \colon
C \to \PP^4$ for the canonical closed immersion. Twist the exact
sequence
\[
0 \to \cI \to \OO_{\PP^4} \to \iota_*\OO_C \to 0
\]
by $\OO(2)$ and consider the first part of the long exact sequence on sheaf
cohomology:
\begin{equation}
  \label{eq:les}
  0 \to H^0\left(\cI(2)\right) \to H^0\left(\OO_{\PP^4}(2)\right) \to H^0\left(\OO_C(2)\right).
\end{equation}
The final homomorphism is surjective after base extending to $\bar k$ by Max
Noether's theorem in characteristic zero or Saint-Donat's theorem for positive
characteristic \cite[p.157]{Saint-Donat_Petri}. Since all of these vector spaces are
defined over $k$, it follows that \eqref{eq:les} is already surjective before
passing to $\bar k$.

Counting quadratic forms in 5 variables shows that
\[
\dim H^0\left(\OO_{\PP^4}(2)\right) = \binom{2+4}{2} = 15,
\]
while the Riemann-Roch formula implies that
\[
\dim H^0\left(\OO_C(2)\right) = \dim H^0(2K) = 2\deg(K) + 1 - 5 = 12.
\]
Thus, by \eqref{eq:les}, we see that $H^0\left(\cI(2)\right)$ has
dimension~3. This is precisely the subspace of global sections of $\OO(2)$ that
vanish on $C$, so the first part of the theorem is proved.

Write $\bar \cQ = \cQ \otimes_k \bar k$ for the space of quadrics in
$\PP^4_{\bar k}$ that contain $\bar C$. The results of
Enriques, as formulated by Saint-Donat in
\cite[(4.13)]{Saint-Donat_Petri}, say the following:
\begin{itemize}
  \item If $\bar C$ is trigonal, then the intersection of the quadrics in $\bar
    \cQ$ is isomorphic to the ruled surface $\bF_1 = \PP\left(\OO_{\PP^1} \oplus
    \OO_{\PP^1}(-1)\right)$. Moreover, the linear series cut out by the ruling
    on $\bF_1$ is a $g^1_3$.
  \item If $\bar C$ is not trigonal, then $\bar C$ is the intersection of the
    quadrics in $\bar \cQ$.
\end{itemize}

To complete the proof of Theorem~\ref{thm:genus5_nontrig}, we must show that
this result descends to $C$. Suppose first that $C$ is trigonal. Then $\bar C$
is also trigonal. Let $Y$ be the intersection of the quadrics in $\cQ$. The
above result shows that $\bar Y \cong \bF_1$. The ruled surface $\bF_1$ is
isomorphic to the blowing up of $\PP^2_k$ at a rational point
\cite[Example~V.2.11.5]{Hartshorne_Bible}, and hence has no nontrivial twist
by Lemma~\ref{lem:twist} below. That is, $Y$ is isomorphic to $\bF_1$ over $k$,
and a $g^1_3$ is given by restring the ruling on $\bF_1$ to $C$.

Conversely, suppose that $C$ is not trigonal. Let $Y$ be the intersection of the
quadrics in $\cQ$. If $\bar C$ is trigonal, then the argument in the preceding
paragraph applies to show that $Y$ is $k$-isomorphic to $\bF_1$. But then the
ruling on $\bF_1$ gives a $g^1_3$ on $C$, a contradiction. So $\bar C$ is not
trigonal, and we find that $\bar C$ is the intersection of the quadrics in $\bar
\cQ$. But the latter space has a $k$-basis, so $C$ is the intersection of the
quadrics in $\cQ$.

\begin{lemma}
  \label{lem:twist}
  Let $k$ be a perfect field. The surface $S$ given by blowing up $\PP^2_k$ at a
  rational point has no nontrivial twist. 
\end{lemma}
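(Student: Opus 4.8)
The plan is to use the classification of twists by Galois cohomology together with the canonical blow-down structure of $\bF_1$, reducing the statement to the triviality of Severi--Brauer surfaces that carry a rational point. Recall that a twist of $S$ is a $k$-variety $S'$ with $\bar{S'} \cong \bar S = \bF_1$, and that such twists are classified up to $k$-isomorphism by the pointed set $H^1\!\left(\Gal(\bar k/k), \mathrm{Aut}(\bF_1)\right)$. Rather than compute this set head-on, I would argue geometrically, exploiting the fact that $\bF_1$ is a blow-up in an essentially canonical way.

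The first key step is to observe that $\bF_1$ contains a \emph{unique} irreducible curve of self-intersection $-1$, namely the exceptional curve $E$ of the blow-down $\bF_1 \to \PP^2$. This follows from a short Picard-group computation: writing $f$ for the fiber class, an effective irreducible class $aE + bf$ has self-intersection $-a^2 + 2ab = a(2b-a)$ with $a = (aE+bf)\cdot f \ge 0$, and $a(2b-a) = -1$ forces $a = 1,\ b = 0$, i.e. the class of $E$ itself. Since every automorphism of $\bF_1$ must preserve the unique $(-1)$-curve, and the Galois action on $\bar{S'}$ proceeds through such automorphisms, the curve $E$ is $\Gal(\bar k/k)$-stable and therefore descends to a curve $E'$ on $S'$ defined over $k$.

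Next I would contract $E'$ over the ground field. This produces a $k$-morphism $S' \to X$ to a smooth projective surface with $\bar X \cong \PP^2_{\bar k}$; that is, $X$ is a Severi--Brauer surface. Because $E'$ is defined over $k$ and is contracted to a single geometric point, its image is a $k$-rational point $q \in X(k)$. At this point I invoke Ch\^atelet's theorem: a Severi--Brauer variety that possesses a rational point is $k$-isomorphic to projective space. Hence $X \cong \PP^2_k$, carrying $q$ to a rational point, and since $\mathrm{PGL}_3(k)$ acts transitively on $\PP^2(k)$ I may further arrange that $q$ maps to the distinguished point $p$. Blowing up then recovers $S' \cong \mathrm{Bl}_p \PP^2 = S$, so every twist is trivial.

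The main obstacle is the descent step: one must be certain that the unique geometric $(-1)$-curve genuinely descends to a $k$-curve that can be blown down over the ground field, rather than only over $\bar k$. This is precisely where perfectness of $k$ enters, guaranteeing that the Galois-invariant closed subscheme $E \subset \bar{S'}$ arises from a closed subscheme defined over $k$; once this is secured, the remainder of the argument is formal, resting only on Ch\^atelet's theorem and the transitivity of the projective linear group on rational points.
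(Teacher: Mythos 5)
Your proof is correct, but it takes a genuinely different route from the paper's. Both arguments start from the same observation: $\bF_1$ has a unique $(-1)$-curve, so the (semilinear) Galois action on the base change of any twist must preserve it. From there the paper stays entirely inside group cohomology: it identifies $\Aut(\bar S)$ with the stabilizer $G \subset \PGL_3(\bar k)$ of the blown-up point, writes $G$ as an extension $0 \to \bar k^2 \to G \to \GL_2(\bar k) \to 0$, and shows $H^1(k,G)$ is trivial using vanishing of additive cohomology and Hilbert 90 --- so every twist is trivial because the whole cohomology set is. You instead trivialize each twist geometrically: descend the $(-1)$-curve (this is where perfectness enters, as you note), contract it over $k$ to obtain a Severi--Brauer surface carrying a rational point, apply Ch\^atelet's theorem, and blow back up. In cohomological terms, your argument runs along the exact sequence of pointed sets $\PP^2(k) = (\PGL_3/G)(k) \to H^1(k,G) \to H^1\left(k,\PGL_3(\bar k)\right)$: Ch\^atelet kills the image of the twist's class on the right, and transitivity of $\PGL_3(k)$ on $\PP^2(k)$ shows the fiber over the trivial class is a single point; the paper instead exploits the internal unipotent-radical/Levi structure of $G$. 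What your route buys is concreteness --- every twist visibly acquires a rational point and an explicit blow-down, and you never need to determine $\Aut(\bar S)$ precisely; what it costs is reliance on two imported facts, Ch\^atelet's theorem and the statement that a geometrically irreducible $k$-curve whose base change is a $(-1)$-curve can be contracted over the ground field (Castelnuovo over a non-closed field). That second fact is standard but you invoke it in a single sentence, and since it is exactly the kind of descent assertion the lemma is about, it deserves either a citation or a word of proof (e.g., the blow-down morphism over $\bar k$ is canonical because the contracted curve is, so the descent datum on $\bar{S'}$ induces one on the target, effective by projectivity and Galois-invariance of the ample anticanonical class). The paper's computation, by contrast, is self-contained given Hilbert 90.
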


\begin{proof}
  Twists are in bijection with $H^1(k,\Aut(\bar S)) := H^1(\Gal(\bar k / k),
  \Aut(\bar S))$. Since $S$ has a unique curve with self-intersection $-1$, this
  curve must be stabilized by every automorphism of $S$. Blowing down the
  $(-1)$-curve yields a homomorphism $\psi : \Aut(\bar S) \to \Aut(\PP^2_{\bar
    k} ,p)$, where $p$ is the rational point of $\PP^2$ that we blew up to
  obtain $S$. In fact, $\psi$ is an isomorphism because every automorphism of
  $\PP^2_{\bar k}$ that fixes $p$ maps lines through $p$ to lines through $p$.

  Without loss, we may suppose  that $p = (1:0:0)$. The subgroup of
  $\Aut(\PP^2_{\bar k})  = \PGL_3(\bar k)$ that fixes $p$ is
  \[
    G = \left\{ \smallmat{1 & v \\ 0 & A} : v \in \bar k \oplus \bar k, A \in
    \GL_2(\bar k)\right\}.
    \]
    Evidently $G$ fits into a (split) short exact sequence 
  \[
  0 \to \bar k^2 \to G \to \GL_2(\bar k) \to 0,
  \]
  and the associated long exact sequence on Galois cohomology contains the exact
  sequence of pointed sets
  \[
  H^1(k, \bar k^2) \to H^1(k, G) \to H^1(k, \GL_2(\bar k)).
  \]
  The first term is trivial because $\bar k$ has no cohomology, and the last
  term is trivial by Hilbert 90. Hence, $H^1(k,G)$ is trivial, and $S$ has no
  nontrivial twist. 
\end{proof}


\subsection{Gonality 4}
\label{sec:5-4}

Every non-trigonal curve of genus 5 is cut out by three linearly independent
quadratic forms. It turns out that one can detect the presence of a $g^1_4$
based on the equivalence class of these forms. This is well known in the
algebraically closed setting \cite[p.207--8]{Geom_of_Alg_Curves}; we give a
proof that is agnostic to the field.

\begin{lemma}
  \label{lem:notIorII}
  Suppose that $C \subset \PP^4_{\FF_q} = \Proj \FF_q[v,w,x,y,z]$ is a
  non-trigonal canonically embedded genus-5 curve. Then $C$ has gonality 4 if and
  only if it lies on a quadric hypersurface isomorphic to $vw + x^2 = 0$ or $vw
  + xy = 0$.
\end{lemma}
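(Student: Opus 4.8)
The plan is to prove both directions by matching $g^1_4$'s on $C$ with split quadrics of rank $\le 4$ in the net $\cQ$, working throughout with $\FF_q$-rational sections so the argument never leaves the ground field. Since $C$ is non-trigonal and non-hyperelliptic, Proposition~\ref{prop:gengon} pins its gonality in $\{4,5,6\}$, so ``$C$ has gonality~4'' is equivalent to ``$C$ carries an $\FF_q$-rational $g^1_4$.'' (A degree-4 morphism to $\PP^1$ produces an effective $\FF_q$-divisor $D$ of degree~4 with $\dim|D|\ge 1$; Clifford's theorem forbids $\dim|D|\ge 2$ on a non-hyperelliptic curve, so $|D|$ is genuinely a $g^1_4$.) This reduces the lemma to a statement linking $g^1_4$'s to the two named quadrics.

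For the forward direction, suppose $|D|$ is an $\FF_q$-rational $g^1_4$. By Riemann-Roch $|K-D|$ is again a $g^1_4$, and I would split into the cases $D\sim K-D$ and $D\not\sim K-D$, mimicking the product-of-sections construction already used in Lemma~\ref{lem:gen4}. Choosing $\FF_q$-bases $\{s_0,s_1\}$ of $H^0(D)$ and $\{t_0,t_1\}$ of $H^0(K-D)$, the products $s_it_j$ lie in $H^0(K)$ and satisfy the Segre relation $(s_0t_0)(s_1t_1)=(s_0t_1)(s_1t_0)$ identically on $C$. When $D\not\sim K-D$ the four products are linearly independent and extend to an $\FF_q$-coordinate system $v,w,x,y,z$ on $\PP^4$ in which $C$ lies on $\{vw+xy=0\}$; when $D\sim K-D$ one instead takes $s_0^2,s_0s_1,s_1^2$, obtaining three independent linear forms and the relation $vw+x^2=0$. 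Crucially, the resulting quadric is defined over $\FF_q$ precisely because $|D|$ (hence the $s_i,t_j$) was, so no nonsplit form $vw+N(x,y)$ can be produced this way.

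For the converse, suppose $C$ lies on $Q=\{vw+x^2=0\}$ or $Q=\{vw+xy=0\}$. Each such $Q$ is a cone whose base conic (respectively, quadric surface $\PP^1\times\PP^1$) is split over $\FF_q$, so $Q$ carries an $\FF_q$-rational ruling by $2$-planes. I would feed this ruling into Lemma~\ref{lem:lingon} with $d=4$: a ruling plane $\Pi$ meets $C=Q\cap Q'\cap Q''$ in $\Pi\cap Q'\cap Q''$, which is $4$ points by Bezout on $\Pi\cong\PP^2$, and these $4$ points form the moving part of the $\FF_q$-rational pencil cut by the ruling. This yields an $\FF_q$-rational $g^1_4$, so the gonality is at most~4, hence exactly~4 by the reduction above.

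The two places I expect real friction are: (i) verifying the linear-independence claims for the products $s_it_j$, i.e.\ that the constructed quadric genuinely has rank~4 (respectively~3) and that $C$ is not thereby forced into a hyperplane or shown trigonal---this is where non-degeneracy of the canonical embedding and Theorem~\ref{thm:genus5_nontrig} must be invoked; and (ii) confirming in the converse that the ruling cuts a base-point-free pencil of degree exactly~4 and that $C$ avoids the vertex of the cone, so that the induced morphism to $\PP^1$ really has degree~4. Both reduce to bookkeeping with the quadratic-form normal forms of Theorem~\ref{thm:classification} (where the characteristic-2 identities make $vw+x^2$ and $vw+xy$ the correct split representatives), but they are the steps where the field-agnostic claim must be earned rather than quoted from the algebraically closed theory.
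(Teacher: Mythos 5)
Your proposal matches the paper's own proof in both structure and substance: the paper establishes the quadric-implies-gonality-4 direction exactly as you do, by intersecting the $\FF_q$-rational pencil of $2$-planes through the cone vertex with the other two quadrics cutting out $C$ (non-trigonality plus Theorem~\ref{thm:genus5_nontrig} guaranteeing $C = V(Q_1,Q_2,Q_3)$), and the converse by the same case split $D \sim K-D$ versus $D \not\sim K-D$, with the explicit maps $P \mapsto (-1, f^2, f, h, j)$ landing on $vw+x^2=0$ and $P \mapsto (f, h, -1, fh, j)$ landing on $vw+xy=0$, which is precisely your Segre-product construction in rational-function coordinates. The one friction point you flag---linear independence of the products when $D \not\sim K-D$---is resolved in the paper not by invoking Theorem~\ref{thm:genus5_nontrig} but by the same adjunction argument as in Lemma~\ref{lem:gen4}: dependence would force the image of $C$ in $\PP^1 \times \PP^1$ to be a genus-0 $(1,1)$-curve, giving $D \sim K-D$, a contradiction.
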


\begin{proof}
  The proof is quite similar to the one used for Lemma~\ref{lem:gen4}.  Suppose
  $C$ lies on a quadric hypersurface isomorphic to $S = V(vw + x^2)$; without
  loss, we may assume $C \subset S$. This hypersurface is a cone over the
  singular quadric cone $S_0 \subset \PP^3 = \Proj \FF_q [v,w,x,y]$ given by the
  same equation. Since $S_0$ contains a 1-parameter family of lines through its
  singularity, we find that $S$ contains a 1-parameter family of 2-planes
  through the cone point $(0 : 0 : 0 : 0 : 1)$. Write $\{P_\lambda\}$ for this
  family of planes. Since $C$ is not trigonal, it is the intersection of three
  quadric surfaces; say $C = V(Q_1,Q_2,Q_3)$, where $Q_1 = vw + x^2$. It follows
  that
  \[
  P_\lambda \cap C = P_\lambda \cap V(Q_1) \cap V(Q_2) \cap V(Q_3)  
  = P_\lambda \cap V(Q_2) \cap V(Q_3),
  \]
  and the latter intersection is visibly a zero-cycle of degree~4. That is, the
  family of divisors $\{P_\lambda \cap C\}$ is a $g^1_4$ on $C$.

  The same argument applies if $C$ lies on the quadric hypersurface $xy + vw =
  0$. Such a hypersurface is isomorphic to a cone over a quadric surface in
  $\PP^3$ given by the same equation --- which is isomorphic to $\PP^1 \times
  \PP^1$ --- and this surface has a 1-parameter family of lines on it.

  For the converse, let $D$ be an effective divisor on $C$ such that $|D|$
  contains a $g^1_4$. We must consider two cases. Suppose first that $D \sim K -
  D$. Let $f \in L(D)$ be a nonconstant rational function. Then $1,f,f^2 \in
  L(2D)$. The common poles of these three functions have different orders, so
  they are linearly independent. Since $2D \sim K$ and $\dim L(K) = 5$, there
  are rational functions $h,j$ such that $1,f,f^2,h,j$ are a basis for
  $L(2D)$. Define a rational map
  \begin{eqnarray*}
    C &\dashrightarrow & \PP^4 = \Proj \FF_q[v,w,x,y,z] \\
    P &\mapsto & (-1, f^2(P), f(P), h(P), j(P)).
  \end{eqnarray*}
  The image lies on $vw + x^2 = 0$, and the fact that $C$ is smooth shows that
  this morphism extends over $\supp(D)$. Evidently, this is a canonical
  embedding of $C$.

  Finally, suppose that $D \not\sim K - D$. Then $|K-D|$ is also a $g^1_4$, and
  $\dim |D| = \dim |K-D|$ by Riemann-Roch. Let $D' \sim K - D$ be an effective
  divisor, and let $f,h$ be nonconstant rational functions in $L(D)$ and
  $L(D')$, respectively. It follows that $1, f, h, fh$ are in $L(D + D')$, and
  there is a function $j \in L(D+D')$ not in the span of these four.  Define a
  rational map
  \begin{eqnarray*}
    C  &\dashrightarrow & \PP^4 = \Proj \FF_q[v,w,x,y,z]\\
    P &\mapsto & (f(P), h(P), -1, f(P)h(P), j(P)).
  \end{eqnarray*}
  The image lies on $vw + xy = 0$, and it extends to a morphism on $C$. To show
  that the morphism gives a canonical embedding of $C$, it remains to prove that
  $1, f, h$, and $fh$ are linearly independent. The argument is identical to the
  one in the proof of Lemma~\ref{lem:gen4}. 
  
  Consider the morphism $\psi : C \to \PP^1 \times \PP^1$ induced by $|D|$ and
  $|D'|$; in coordinates, it is $P \mapsto (1,f(P)) \times (1,h(P))$.  If
  $1,f,h,fh$ are linearly dependent, then the image $C_0$ of $\psi$ is a
  $(1,1)$-curve on $\PP^1 \times \PP^1$. Such a curve has arithmetic genus~0 by
  the adjunction formula, and hence geometric genus 0. The component projections
  $\PP^1 \times \PP^1 \to \PP^1$ induce a birational morphism $C_0 \to \PP^1$,
  and they give two compositions
  \[
   C \to C_0 \to \PP^1,
   \]
   induced by $|D|$ and $|D'|$, respectively. But this implies $D \sim D' \sim K
   - D$, which is a contradiction.
\end{proof}

\begin{theorem}
   \label{thm:5-4}
  $N_2(5,4) = 9$
\end{theorem}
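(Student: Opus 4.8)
The plan is to prove the two inequalities $N_2(5,4)\le 9$ and $N_2(5,4)\ge 9$ separately. The upper bound is immediate: a curve of genus~5 and gonality~4 is in particular a genus-5 curve, so Table~\ref{tab:manypoints} gives $\#C(\FF_2)\le N_2(5)=9$. All of the work therefore goes into exhibiting a single genus-5 curve of gonality exactly~4 over $\FF_2$ with $9$ rational points, which will give the matching lower bound.

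For the construction I would follow the template of the genus-4 theorems and build the curve directly inside its canonical model in $\PP^4=\Proj\FF_2[v,w,x,y,z]$. Fix the quadric $Q_1=vw+xy$ (the quadric $vw+x^2$ would serve equally well) and search for two further quadratic forms $Q_2,Q_3$ so that $C=V(Q_1,Q_2,Q_3)$ is a smooth, geometrically irreducible curve of genus~5 passing through exactly $9$ of the rational points of the hypersurface $V(Q_1)$. Because $Q_1$ is one of the two forms singled out in Lemma~\ref{lem:notIorII}, once $C$ is known to be a non-trigonal canonically embedded genus-5 curve, that lemma produces a $g^1_4$ and hence forces the gonality to be at most~4.

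The verification then consists of a few routine checks, each performed in Sage but essential to record. First, confirm that $V(Q_1,Q_2,Q_3)$ is a smooth geometrically irreducible curve of degree~8 spanning $\PP^4$ with arithmetic genus~5; being nondegenerate of degree $2g-2$, it is then canonically embedded, and in particular not hyperelliptic. Second, confirm that the three linearly independent quadrics $Q_1,Q_2,Q_3$ cut out a one-dimensional scheme. By Theorem~\ref{thm:genus5_nontrig} the space of quadrics through a non-hyperelliptic genus-5 curve is exactly $3$-dimensional, so these three forms span the full space $\cQ$; their common locus is therefore the intersection of all quadrics in $\cQ$, which is the ruled surface $\bF_1$ in the trigonal case and $C$ itself otherwise. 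Thus a one-dimensional common locus certifies that $C$ is non-trigonal, so its gonality is at least~4; combined with the previous paragraph, the gonality is exactly~4. Third, count rational points to check that $\#C(\FF_2)=9$.

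The main obstacle is the search itself: one must choose $Q_2$ and $Q_3$ so that every constraint holds simultaneously — the common locus must be a smooth irreducible curve (one-dimensional, with no extra components or singular points), its genus must be exactly~5, and it must meet $V(Q_1)$ in precisely $9$ rational points, no fewer (to reach the bound) and no more (since $N_2(5)=9$ makes more impossible). The delicate point is guaranteeing non-trigonality, i.e.\ that $Q_1,Q_2,Q_3$ do not share a common surface; this is exactly what separates the gonality-4 case from the trigonal alternative of Theorem~\ref{thm:genus5_nontrig} and is precisely the hypothesis needed to invoke Lemma~\ref{lem:notIorII}. Once an explicit triple survives all of these checks, the lower bound $N_2(5,4)\ge 9$ follows, and together with the upper bound the theorem is proved.
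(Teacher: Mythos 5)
Your proposal is correct and follows essentially the same route as the paper: the paper exhibits an explicit curve cut out by three quadrics in $\PP^4$ with first quadric $vw+xy=0$, verifies smoothness, genus~5, and the 9-point count by a Gr\"obner basis computation in Sage, and quotes Serre's bound $N_2(5)\le 9$ for the upper bound, exactly as you outline. The only difference is expository: the paper's proof leaves the gonality-4 verification implicit, whereas you spell out the precise chain (canonical embedding from nondegeneracy and degree $2g-2$, non-trigonality via Theorem~\ref{thm:genus5_nontrig} because the three quadrics cut out a curve rather than the surface $\bF_1$, then Lemma~\ref{lem:notIorII} to produce the $g^1_4$), which is precisely the justification the surrounding section supplies for the paper's example.
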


\begin{proof}
  Consider the subvariety of $\PP^4 = \Proj \FF_2[v,w,x,y,z]$ cut out by the equations
  \begin{eqnarray*}
    vw + xy &=& 0 \\
    vx + z(v + w + z) &=& 0 \\
    (x+y)^2 + y(v+w) &=& 0.
  \end{eqnarray*}
  A Gr\"obner basis calculation (in Sage, say) shows that these equations
  determine a smooth curve of genus~5, and one verifies directly that it has 9
  rational points. Hence, $N_2(5,4) \geq 9$.  For the upper bound, Serre showed
  that $N_2(5) \leq 9$ \cite{Serre_points_on_curves_1983}, which completes the
  proof.
\end{proof}


\subsection{Gonality at least~5}
\label{sec:genus5_calc}

In this section, we describe a computer calculation that allowed us to inspect
every curve of genus~5 over $\FF_2$ of gonality at least~5. This turned out to
be necessary for two reasons. First, we were unable to produce an upper bound on
the number of rational points on a curve with gonality~5 by purely theoretical
means. Second, we have no satisfactory explanation for the non-existence of a
curve of gonality~6; it is simply a phenomenon we observed in our data.

Let $C$ be a genus-5 curve of gonality at least~5, which we identify with its
image under a canonical embedding in $\PP^4 = \Proj \FF_2[v,w,x,y,z]$.
Theorem~\ref{thm:genus5_nontrig} shows that $C$ is the vanishing locus of a
3-dimensional space $W$ of quadratic forms. If $Q \in W$ is a nonzero form, then
it must be geometrically irreducible as $C$ does not lie on a hyperplane. The
classification of quadratic forms (Theorem~\ref{thm:classification}) shows that,
after an appropriate linear change of variable, any geometrically irreducible
quadratic form is equivalent to one of the following:
\begin{itemize}
\item[I.] $vw + x^2$ (singular line and 15 rational points)
\item[II.] $vw + xy$ (isolated singularity and 19 rational points)
\item[III.] $vw + x^2 + xy + y^2$ (isolated singularity and 11 rational points)
\item[IV.] $vw + xy + z^2$ (smooth and 15 rational points)
\end{itemize}
The parenthetic statements describe the associated quadric hypersurface.  We
will say that a form $Q$ has type I if it is equivalent to $vw + x^2$, and
similarly for types II, III, and IV. Lemma~\ref{lem:notIorII} shows that every
nonzero quadratic form in $W$ is of type~III or~IV.

To organize our search, we now argue that, up to linear change of variable, $W$
admits a special kind of basis $\{Q_1, Q_2, Q_3\}$. This involves two cases,
depending on whether $W$ contains a form of type III.

\medskip

\noindent \textbf{Case $W$ contains a form of type III.} Every nonzero form in
$W$ must be of type~III or~IV. Set $Q_1 = vw + x^2 + xy + y^2$. The orthogonal
group $O(Q_1)$ acts on the set of quadratic forms of type~III or~IV; choose a
set $A(Q_1)$ of orbit representatives for this action. Let us discard $Q_1$ from
the set $A(Q_1)$, as well as any $Q$ such that the linear span of $Q_1$ and $Q$
contains a nonzero form that is not of type~III or~IV.  Take $B(Q_1)$ to be the
set of all forms of type~III or~IV. We may take a basis for $W$ of the form $\{Q_1,
Q_2, Q_3\}$ with $Q_2 \in A(Q_1)$ and $Q_3 \in B(Q_1)$.

\medskip

\noindent \textbf{Case $W$ contains no form of type III.} Every nonzero form in
$W$ must be of type~IV. Set $Q_1 = vw + xy + z^2$. The orthogonal group $O(Q_1)$
acts on the set of quadratic forms of type~IV; choose a set $A(Q_1)$ of orbit
representatives for this action. Let us discard $Q_1$ from the set $A(Q_1)$, as
well as any $Q$ such that the linear span of $Q_1$ and $Q$ contains a nonzero
form that is not of type~IV.  Take $B(Q_1)$ to be the set of all forms of
type~IV. Evidently, we may take a basis for $W$ of the form $\{Q_1, Q_2, Q_3\}$
with $Q_2 \in A(Q_1)$ and $Q_3 \in B(Q_1)$.

\medskip

From an algorithmic standpoint, it is worth noting two things:
\begin{itemize}
  \item The type of a quadratic form can be determined by calculating the
    dimension of its singular locus and by counting its rational points. The
    former amounts to computing the rank of a $5 \times 5$ matrix over $\FF_2$,
    while the latter can be accomplished with a search over the 31 points of
    $\PP^4(\FF_2)$. This should be done once for all quadratic forms in
    $\FF_2[v,w,x,y,z]$ and stored; there are $2^{15} - 1= 32,767$ quadratic forms.
  \item We need to compute the orthogonal groups $O(vw + xy + z^2)$ and $O(vw +
    x^2 + xy + y^2)$. This can be accomplished with the techniques in
    \S\ref{sec:orthogonal}.
\end{itemize}

\begin{algorithm}[ht]
\caption{--- Compute a list of genus~5 curves over $\FF_2$ containing  all those
  of gonality at least~5, up to isomorphism}
  \begin{algorithmic}[1]
    \STATE initialize an empty list $\mathbf{curves}$.
\FOR { $Q_1 \in \{vw + x^2 + xy + y^2, vw + xy + z^2\}$ }
  \STATE compute the sets of quadratic forms $A(Q_1)$ and $B(Q_1)$.
  \FOR { $(Q_2,Q_3) \in A(Q_1) \times B(Q_1)$ }
  
  \IF {every nonzero member of the linear span of $\{Q_1,Q_2,Q_3\}$ has type at
    least that of $Q_1$, and the variety $V(Q_1,Q_2,Q_3)$ is irreducible and
    smooth of dimension~1} 
    \STATE append $(Q_1,Q_2,Q_3)$ to $\mathbf{curves}$.    
  \ENDIF
  \ENDFOR
  \ENDFOR
  \STATE \textbf{return curves}
\end{algorithmic}
  \label{alg:gonality5}
\end{algorithm}

We implemented Algorithm~\ref{alg:gonality5} in Sage. For each of the standard
forms $Q_1$, we computed the orthogonal group and the sets $A(Q_1)$ and $B(Q_1)$
and saved them to disk for later use; this required a small number of minutes of
compute time.  The loop over pairs $(Q_2,Q_3) \in A(Q_1) \times B(Q_1)$ involves
a number of commutative algebra computations that are handled by Singular, and
constitute the bulk of the runtime of the algorithm. Table~\ref{table:search_time}
summarizes the outcome of this computation. 

\begin{table}[ht]
  \begin{tabular}{c|c|c|c|c|c|c}
    $Q_1$ & $\#O(Q_1)$ & $\#A(Q_1)$ & $\#B(Q_1)$ & Curves & Wall Time \\
    \hline \hline
    $vw + x^2 + xy + y^2$ & 1,920 & 17 & 19,096 & 30,296 & 371min \\
    \hline
    $vw + xy + z^2$  & 720 & 10 & 13,888 & 8,296 & 190min \\
    \hline
  \end{tabular}
  \caption{Counts and timing in our search for all canonically embedded genus-5
    curves over $\FF_2$ of gonality at least 5, up to linear isomorphism. Every
    isomorphism class is represented by at least one curve that we found, though
    we make no claim of uniqueness of representation.}
  \label{table:search_time}
\end{table}

Each of the curves in the output of Algorithm~\ref{alg:gonality5} is presented
as the intersection of three quadratic forms. This makes it possible to count
their rational points by a straightforward search over points of
$\PP^4(\FF_2)$. The results of this procedure are presented in
Table~\ref{table:numpoints}.

\begin{table}[ht]
  \begin{tabular}{c||c|c|c|c|c}
    $Q_1$ \ $\backslash$ \ $\#C(\FF_2)$ &  0 & 1 & 2 & 3 & $\geq 4$ \\
    \hline \hline
    $vw + x^2 + xy + y^2$ &  11,864 & 13,184 & 5,248  & 0 & 0  \\ 
    \hline
    $vw + xy + z^2$  & 0  & 0 & 0 & 8,296 & 0 \\
    \hline
  \end{tabular}
  \caption{Number of curves found with Algorithm~\ref{alg:gonality5} with a given $Q_1$ and number of rational points. }
  \label{table:numpoints}
\end{table}

\begin{theorem}
  \label{thm:5-5}
  $N_2(5,5) = 3$.
\end{theorem}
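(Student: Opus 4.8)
The plan is to combine the structural reduction developed in \S\ref{sec:genus5_gen} and \S\ref{sec:5-4} with the exhaustive search recorded in Tables~\ref{table:search_time} and~\ref{table:numpoints}. First I would reduce an arbitrary genus-5 curve $C_{/\FF_2}$ of gonality~5 to the search space of Algorithm~\ref{alg:gonality5}. Since $5 > 3$, such a $C$ is non-trigonal, so by Theorem~\ref{thm:genus5_nontrig} it is canonically embedded in $\PP^4$ and is the vanishing locus of a 3-dimensional space $W$ of quadratic forms. As $C$ does not lie on a hyperplane, every nonzero form in $W$ is geometrically irreducible, hence of type~I--IV; and as the gonality is not~4, Lemma~\ref{lem:notIorII} rules out every form of type~I or~II. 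Thus every nonzero form in $W$ is of type~III or~IV, which is precisely the situation enumerated by Algorithm~\ref{alg:gonality5}.

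For the upper bound $N_2(5,5) \le 3$, I would appeal to the completeness of the search: every isomorphism class of canonically embedded genus-5 curve whose quadric space consists entirely of type~III or~IV forms is represented among the curves found. In particular every gonality-5 curve appears, by the reduction of the previous paragraph. Reading off Table~\ref{table:numpoints}, none of the curves found carries more than~3 rational points, and so $N_2(5,5) \le 3$.

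For the lower bound I would exhibit one explicit curve drawn from the type~IV column of Table~\ref{table:numpoints}---that is, a curve whose three defining quadrics are all of type~IV---and check directly that it is smooth of genus~5 and has exactly~3 rational points. Its gonality is then pinned to~5 from two sides: it is at least~5 because the curve is the one-dimensional intersection of three quadrics with no form of type~I or~II (hence non-trigonal by Theorem~\ref{thm:genus5_nontrig} and not gonality~4 by Lemma~\ref{lem:notIorII}), and it is at most~5 because any rational point is a fortiori an $\FF_8$-point, so Lemma~\ref{lem:gonality5} supplies a $g^1_5$. As the curve has 3 rational points, this yields $N_2(5,5) \ge 3$ and completes the proof.

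The main obstacle is the one the authors flag explicitly: there is no purely theoretical upper bound here, so the argument stands or falls on the completeness and correctness of the Sage computation. The delicate point to certify is that Algorithm~\ref{alg:gonality5} really meets every relevant isomorphism class; this is where the reduction to a basis $\{Q_1,Q_2,Q_3\}$ with $Q_2 \in A(Q_1)$ and $Q_3 \in B(Q_1)$, together with the orbit analysis under the orthogonal group $O(Q_1)$, must be airtight. By contrast, the empirical split between the two rows of Table~\ref{table:numpoints}---type~III curves carrying at most~2 points while type~IV curves carry exactly~3---is not something I would expect to explain by hand; it is simply read from the data.
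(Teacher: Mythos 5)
Your proposal is correct and follows essentially the same route as the paper: the upper bound is read from the completeness of Algorithm~\ref{alg:gonality5} together with Table~\ref{table:numpoints}, and the lower bound comes from exhibiting an explicit curve in the type-IV portion of the search output (the paper writes down such a curve) whose gonality is forced to equal~5. The only cosmetic difference is that you certify $\gamma \le 5$ via Lemma~\ref{lem:gonality5} applied to an $\FF_8$-point, whereas the paper cites part~(3) of Proposition~\ref{prop:gengon}; both steps are immediate once the curve has a rational point.
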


\begin{proof}
  Consider the genus-5 curve $C \subset \PP^4_{\FF_2} = \Proj \FF_2[v,w,x,y,z]$
  described by the following equations:
  \begin{eqnarray*}
    vw + xy + z^2 &=& 0 \\
      vx + y^2 + vz + wz &=& 0 \\
      x^2 + wy + xy + vz + xz &=& 0.
  \end{eqnarray*}
  This is one of the curves discovered by Algorithm~\ref{alg:gonality5}, so it
  has gonality at least~5.  One verifies by a direct search that it has three
  rational points. In particular, it has gonality~5 by the third part of
  Proposition~\ref{prop:gengon}. Hence, $N_2(5,5) \ge 3$.  Looking at the data
  in Table~\ref{table:numpoints}, we see that no curve has more than 3
  rational points, so $N_2(5,5) \leq 3$.
\end{proof}

\begin{theorem}
  \label{thm:5-6}
  Every curve of genus 5 over $\FF_2$ has gonality less than or equal to~5. In
  particular, $N_2(5,6) = -\infty$.
\end{theorem}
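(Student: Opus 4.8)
The plan is to combine the geometric criterion of Lemma~\ref{lem:gonality5} with the exhaustive enumeration carried out by Algorithm~\ref{alg:gonality5}. First I would isolate the only case that could possibly produce gonality~6. By Proposition~\ref{prop:gengon} the gonality of a genus-5 curve lies in $\{2,3,4,5,6\}$, and anything with $\gamma \le 4$ is harmless. A hyperelliptic curve has $\gamma = 2$ and a trigonal curve has $\gamma = 3$, so a curve of gonality $5$ or $6$ is non-hyperelliptic and non-trigonal; by Theorem~\ref{thm:genus5_nontrig} it is then cut out by three linearly independent quadratic forms, none of which (by Lemma~\ref{lem:notIorII}) can have type~I or~II, lest $\gamma = 4$. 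Hence every genus-5 curve of gonality at least~5 appears, up to isomorphism, in the output of Algorithm~\ref{alg:gonality5}.

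The key theoretical input is Lemma~\ref{lem:gonality5}: a non-hyperelliptic genus-5 curve over $\FF_2$ admits a $g^1_5$, and so has gonality at most~5, if and only if $C(\FF_8) \ne \varnothing$. Consequently a curve of gonality exactly~6 would have to be one of the curves found by the algorithm with $C(\FF_8) = \varnothing$, and it suffices to show that every curve in the algorithm's output carries an $\FF_8$-point. Because $\FF_2 \subset \FF_8$, any curve with a rational point is already settled --- the third part of Proposition~\ref{prop:gengon} even gives $\gamma \le 5$ directly for these --- so the only curves demanding attention are the $11{,}864$ curves of Table~\ref{table:numpoints} with $\#C(\FF_2) = 0$. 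For each curve in the list, presented as the vanishing locus of three explicit quadratic forms, I would run through the $4681$ points of $\PP^4(\FF_8)$ and confirm that at least one satisfies all three equations. Since both the gonality and the number of $\FF_8$-points are isomorphism invariants, it is enough to test one representative per isomorphism class, and the completeness claim attached to Algorithm~\ref{alg:gonality5} guarantees that every class is represented. In every case the search returns a point, so by Lemma~\ref{lem:gonality5} each curve admits a $g^1_5$ and therefore has gonality exactly~5, never~6.

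The main obstacle is that no purely theoretical shortcut is available at $q = 2$: the Weil lower bound reads $\#C(\FF_8) \ge 9 - 10\sqrt{8} < 0$ and so cannot force an $\FF_8$-point, which is exactly why the clean argument valid for $q > 4$ collapses here. The non-existence of gonality-6 curves over $\FF_2$ is therefore a genuinely computational phenomenon, resting on the completeness of the enumeration in Algorithm~\ref{alg:gonality5} together with the finite $\FF_8$-point search above. Once this is done, $N_2(5,6) = -\infty$ is immediate, since there are no curves of genus~5 and gonality~6 at all and the supremum defining $N_2(5,6)$ is thus taken over the empty set.
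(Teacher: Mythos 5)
Your proposal is correct and follows essentially the same route as the paper: the authors likewise invoke the completeness of Algorithm~\ref{alg:gonality5}, note that curves with a rational point already have gonality at most~5, and then locate a cubic (i.e.\ $\FF_8$-rational) point on each of the $11{,}864$ pointless curves so that Lemma~\ref{lem:gonality5} forces gonality exactly~5. Your write-up merely makes explicit the reduction steps (via Theorem~\ref{thm:genus5_nontrig} and Lemma~\ref{lem:notIorII}) that the paper leaves implicit in its description of the algorithm.
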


\begin{proof}
  Algorithm~\ref{alg:gonality5} yielded 11,864 pointless curves of gonality at
  least~5. On each such curve $C$, we located a cubic
  point. Corollary~\ref{cor:gonality5} shows that each of these curves has
  gonality~5.
\end{proof}


\appendix

\section{Miscellaneous values of $N_2(g,\gamma)$ for $6 \le g \le 10$}
\label{sec:misc}

We determine $N_2(g,\gamma)$ in a number of additional cases by techniques that
do not fit the main narrative of the above article. Table~\ref{tab:adhoc}
summarizes our findings. Our main tool is the ``gonality-point inequality'' from
\eqref{eq:gamma_bound}:
\begin{equation*}
C \text{ has gonality $\gamma$} \quad \Longrightarrow \quad \#C(\FF_2) \leq 3 \gamma.
\end{equation*}

\begin{table}[th]
  \begin{tabular}{c|c|c}
  $g$ & $\gamma$ & $N_2(g,\gamma)$ \\
    \hline \hline
    6 & 2 & 6 \\
     & 3 & 9 \\
     & 4 & 10 \\
     & 5--6 & $< 10$ \\
     & 7 & $\le 0$ \\    
    \hline
    7 & 2 & 6 \\
     & 3 & 9 \\
     & 4 & 10\\
     & 5--7 & $\le 10$\\
     & 8 & $\le 0$\\    
    \hline
    8 & 2 & 6 \\
      & 3 & 9 \\
      & 4 & 11\\
      & 5--9 & $\le 11$\\
      & 9 & $\le 0$\\    
    \hline
    9 & 2 & 6 \\
     & 3 & 9 \\
     & 4 & 12\\
     & 5--9 & $\le 12$ \\
     & 10 & $\le 0$ \\    
    \hline
    10 & 2 & 6 \\
     & 3 & 9 \\
    & 4--10 & $\le 13$\\
     & 11 & $\le 0$\\        
  \end{tabular}
  \caption{Maximum number of rational points on binary curves with fixed genus
    and gonality.}
  \label{tab:adhoc}
\end{table}

All of the entries in Table~\ref{tab:adhoc} with $\gamma = 2$ follow from
Theorem~\ref{thm:hyperelliptic}. Each of the entries with $\gamma = 3$ must
satisfy $N_2(g,3) \leq 9$ by the gonality-point inequality.  The following
examples achieve this bound. We discovered them via a naive search for
polynomials $f \in \FF_2[x,y]$ with $y$-degree 3 and small $x$-degree. In each
case, the equation $\{f = 0\}$ gives a (typically singular) affine plane model
for the curve $C$, and the $x$-coordinate function provides a morphism to
$\PP^1$ of degree~3. We used Magma \cite{magma} to compute the genus and count
the rational points on the smooth model; each example has 9 rational
points. Note that if $C$ were hyperelliptic, then $\#C(\FF_2) \leq 6$ by the
gonality-point inequality. In all cases, we conclude that $N_2(g,3) = 9$.

\begin{example}[genus 6, gonality 3, 9 rational points]
  \label{ex:6-3}
  \[
    C_{/\FF_2} : (x^3 + x)y^3 + (x^4 + x + 1)y^2 + (x^4 + x^3 + 1)y + x^3 + x^2 = 0
  \]    
\end{example}

\begin{example}[Genus 7, gonality 3, 9 rational points]
  \label{ex:7-3}  
  \[
    C_{/\FF_2} : y^3 + (x^5 + x^2 + x + 1)y^2 + (x^7 + x^6 + x^2)y + x^7 + x^6    = 0
    \]
\end{example}

\begin{example}[Genus 8, gonality 3, 9 rational points]
  \label{ex:8-3}
  \[
    C_{/\FF_2} : y^3 + (x^6 + 1)y^2 + (x^7 + x^6 + x^2)y + x^7 + x^6
    = 0
    \]
\end{example}

\begin{example}[Genus 9, gonality 3, 9 rational points]
  \label{ex:9-3}  
  \[
    C_{/\FF_2} : y^3 + (x^6 + 1)y^2 + (x^7 + x^5 + x)y + x^7 + x^6
    = 0
    \]
\end{example}

\begin{example}[Genus 10, gonality 3, 9 rational points]
  \label{ex:10-3}  
  \[
    C_{/\FF_2} : y^3 + (x^7 + x^6 + x^5 + 1)y^2 + (x^7 + x^5 + x)y + x^6 + x^5
    = 0
    \]
\end{example}

The remaining known entries of Table~\ref{tab:adhoc} have gonality $\gamma =
4$. We look through the entries with small genus over $\FF_2$ on
\url{manypoints.org} to find additional curves that meet our needs.

\begin{example}[Genus 6, gonality 4, 10 rational points]
  Steve Fischer gave the following example in 2014:
  \begin{eqnarray*}
    X_{/\FF_2} &:& (x^3 + x^2 )y^3 + (x^2+ x +1)y + 1 = 0 \\
    C_{/\FF_2} &:& z^2 + x^2z + (x^3 + x^2)y = 0.
  \end{eqnarray*}
  Here $X$ is a curve of genus~2 with 6 rational points, and $C$ is a double
  cover of $X$ with 10 rational points. Serre showed that $N_2(6) = 10$. Since
  $C$ is a double cover of a genus-2 curve, it must have gonality at most~4. If
  its gonality $\gamma$ were at most~3, then the gonality-point inequality would show
  $\#C(\FF_2) \leq 9$. Hence its gonality is exactly~4, and we have $N_2(6,4) =
  10$.
\end{example}

\begin{example}[Genus 7, gonality 4, 10 rational points]
  Another example by Steve Fischer:
  \begin{eqnarray*}
    X_{/\FF_2} &:& ( x^3 + x )y^3 + (x^3 + x^2 + x)y + 1 = 0 \\
    C_{/\FF_2} &:& z^2 + z + x^3 + x = 0. 
  \end{eqnarray*}
  Just as in the case of genus~6, we find $N_2(7,4) \ge 10$. Serre showed that
  $N_2(7) \le 10$; therefore, $N_2(7,4) = 10$. 
\end{example}

\begin{example}[Genus 8, gonality 4, 11 rational points]
  Isabel Pirsic found the following example in 2012:
  \begin{align*}
    C_{/\FF_2} :  (x^8+x^4+1)y^4 &+ (x^9+x^6+x^5+x+1)y^2 \\
    & +
  (x^9+x^8+x^6+x^5+x^4+x)y  + (x^8+x^7+x^4+x^3) = 0
  \end{align*}
  The $x$-coordinate function gives a degree-4 morphism to $\PP^1$, and $C$
  cannot have gonality smaller than~4 by the gonality-point inequality. Hence, $N_2(8,4)
  \geq 11$.  Serre showed that $N_2(8) = 11$, so $N_2(8,4) = 11$.
\end{example}

\begin{example}[Genus 9, gonality 4, 12 rational points]
  Another example by Isabel Pirsic:
  \begin{align*}
    C_{/\FF_2} \ \colon \ & (x^{12} + x^{10} + x^8 + x^6 + x^4 + x^2 + 1)y^4 \\
    &\hspace*{1cm} + (x^{14} + x^{13} + x^{11} + x^9 + x^7 + x^5 + x^3 + x + 1)y^2 \\
  &\hspace*{1cm}+ (x^{14} + x^{13} + x^{12} + x^{11} + x^{10} + x^9 + x^8 + x^7 + x^6 +x^5 +
  x^4 + x^3 + x^2 + x)y \\
  &\hspace*{1cm}+ x^{11} + x^9 + x^7 + x^5 = 0
  \end{align*}
Just as in the genus~8 case, we find that $N_2(9,4) \ge 12$. Serre showed that
$N_2(9) \le 12$, so we find that $N_2(9,4) = 12$.
\end{example}

One would like to extend the same kind of reasoning to genus~10. However,
$N_2(10) = 13$, so any example that achieves this bound necessarily has gonality
at least 5. Steve Fischer produced a curve with 13 rational points via iterated
double covers that has a natural morphism to $\PP^1$ of degree~6. We conclude that
$N_2(10,5) = 13$ \textit{or} $N_2(10,6) = 13$, but we are unable to determine
which of these is the truth without additional work.

Finally, we address the entry in the table with $g = 6$ and $\gamma > 4$. In
\cite{Rigato_optimal_curves}, the author proves that there are exactly two
curves of genus~6 with 10 rational points, up to isomorphism. Her arguments show
that one of the curves, say $C_1$, is a double cover of an elliptic curve, and
hence has gonality~4; it does not seem immediately obvious how to suss out the
gonality of the other. We claim that the other curve, $C_2$, has gonality 4 as
well. On \url{manypoints.org}, Steve Fischer gave the following example of a
genus-6 curve over $\FF_2$ with 10 rational points:
  \begin{eqnarray*}
    X &:& (x^3 + x^2 )y^3 + (x^2+ x +1)y + 1 = 0 \\
    C_2 &:& z^2 + x^2z + (x^3 + x^2)y = 0.
  \end{eqnarray*}
Here $X$ is a genus-2 curve with 6 rational points, and $C_2$ is a double cover
of it. That is, $C_2$ has gonality 4. One can verify, using Magma say, that
$\#C_2(\FF_{32}) = 20$. Since $\#C_1(\FF_{32}) = 25$, as one can read off of the
data for $a(X)$ in \cite[Thm.~2.4]{Rigato_optimal_curves}, we see that $C_1$ and
$C_2$ must be non-isomorphic. It follows that both of the isomorphism classes of
curves with 10 rational points have gonality~4, and hence any curve of genus 6
with gonality $5, 6$, or $7$ must have fewer than 10 points. 


\section{Quadratic Forms over Finite Fields}
\label{app:quadratic_forms}

Literature on the classification of non-degenerate quadratic forms in odd
characteristic is abundant, but it is much harder to find a self-contained
reference for general quadratic forms in all characteristics. Once one lets go
of the idea that the associated bilinear form should retain all of the
information about the quadratic form, the theory becomes quite streamlined in
all characteristics. Following unpublished notes of Bill Casselman
\cite{Casselman_quadratic_forms} -- who essentially follows the treatment in
\cite{Elman_et_al} --- we give a summary of all of the results that we need. For
additional reference, see \cite{Serre_Course_in_Arithmetic} or \cite{Arf_char2}.


\subsection{Classification of quadratic forms}

\begin{definition}
  Let $\FF_q$ be a finite field and let
  \[
  Q(x) = \sum_{1 \leq i\leq j \leq n} c_{i,j} x_i x_j
  \]
  be a quadratic form over $\FF_q$ in $n$ variables, where $x = (x_1, \ldots,
  x_n)$ and $c_{i,j} \in \FF_q$ are not all zero. Here $n$ is the
  \textbf{dimension} of $Q$. The associated bilinear form is
  \[
  \bil{Q}{x}{y} = Q(x+y) - Q(x) - Q(y).
  \]
  The \textbf{radical} of the bilinear form $\bilname{Q}$ is defined to be
  \[
  \rad = \left\{ a \in \FF_q^n \ :\  \bil{Q}{a}{\FF_q^n} = 0\right\}.
  \]
  A quadratic form is called \textbf{strictly non-degenerate} if $\rad = 0$, or
  equivalently, if $\bilname{}$ is a perfect pairing.
\end{definition}

\begin{remark} If we write $e_i$ for the $i$-th standard basis vector of $\FF_q^n$, then
\[
\bil{Q}{a}{e_i} = \frac{\partial Q}{\partial x_i}(a),
\]
Consequently, the radical is precisely the set of $\FF_q$-rational
points at which all partial derivatives of $Q$ vanish.
\end{remark}

For the remainder of this section, we fix $\alpha \in \FF_{q^2}$ such that
$\FF_{q^2} = \FF_q(\alpha)$. The \textbf{norm form},
\begin{equation}
  \label{eq:norm_form}
\N(x_1,x_2) := \N_{\FF_{q^2}/\FF_q}(x_1 + \alpha x_2) = x_1^2 + (\alpha + \alpha^q)x_1x_2 + \alpha^{q+1}x_2^2,
\end{equation}
is a strictly non-degenerate quadratic form over $\FF_q$.

We say that two quadratic forms $Q_1, Q_2$ are \textbf{equivalent} if they agree
up to a linear change of variables on their domain.

\begin{proposition}[Strictly non-degenerate forms]
  Let $Q$ be a strictly non-degenerate quadratic form over $\FF_q$ of
  dimension~$n$. 
  \begin{itemize}
    \item If $n$ is even, then $Q$ is equivalent to one of the following two
      forms:
      \begin{eqnarray*}
        && x_1x_2 + x_3x_4 + \cdots + x_{n-1}x_n \\
        && x_1x_2 + x_3x_4 + \cdots + x_{n-3}x_{n-2} + a\N(x_{n-1},x_{n-2}),
      \end{eqnarray*}
      where $a \in \FF_q^\times$. The first and second
      forms are never equivalent, and two of the latter type of form with
      final coefficients $a,a'$ are equivalent if and only if $a/a'$ is a square in
      $\FF_q^\times$. 
    \item If $n$ is odd, then $q$ must also be odd, and $Q$ is equivalent to
      \[
      x_1x_2 + x_3x_4 + \cdots + x_{n-2}x_{n-1} + ax_n^2
      \]
      for some $a \in \FF_q^\times$. Two such forms with final coefficients $a,a'$
      are equivalent if and only if $a/a'$ is a square in $\FF_q^\times$. 
  \end{itemize}
\end{proposition}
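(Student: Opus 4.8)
The plan is to classify $Q$ by stripping off hyperbolic planes until only an anisotropic core remains, handling the symmetric bilinear form $\bilname{Q}$ and the quadratic refinement separately. The organizing dichotomy is the characteristic. When $q$ is odd, $\bil{Q}{a}{a} = 2Q(a)$, so $Q$ is recovered from $\bilname{Q}$ and the problem is essentially the classification of symmetric bilinear forms; when $q$ is even, $\bil{Q}{a}{a} = 0$, so $\bilname{Q}$ is alternating and remembers strictly less than $Q$. First I would extract the parity statement from this: a strictly non-degenerate alternating form is a symplectic form, which exists only in even dimension, so $q$ even forces $n$ even. Contrapositively, $n$ odd forces $q$ odd, as claimed.

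Next I would run an induction on $n$ that peels off a hyperbolic plane at each step. The engine is that $\FF_q$ is a $C_1$ field: by Chevalley--Warning every quadratic form in at least three variables over $\FF_q$ has a nontrivial zero, so $Q$ is isotropic once $n \ge 3$. From an isotropic vector $u$ I would build a hyperbolic pair and split off a two-dimensional subspace on which $Q$ restricts to $x_ix_{i+1}$; the orthogonal complement for $\bilname{Q}$ is again strictly non-degenerate of dimension $n-2$, and the induction continues. The recursion halts at an anisotropic core of dimension $0$, $1$, or $2$ --- dimension at most $2$ again because $\FF_q$ is $C_1$. For even $n$ the core is either trivial, yielding the split form $x_1x_2 + \cdots + x_{n-1}x_n$, or an anisotropic binary form; for odd $n$ (hence odd $q$) it is one-dimensional, giving $ax_n^2$.

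To identify the binary core I would show that every anisotropic binary form over $\FF_q$ is a scalar multiple $a\N$ of the norm form \eqref{eq:norm_form}. Writing such a form as $a\bigl(x^2 + bxy + cy^2\bigr)$, anisotropy is equivalent to irreducibility of $t^2 + bt + c$ over $\FF_q$, so the bracketed form is the norm form attached to a generator of $\FF_{q^2}/\FF_q$; since any two such norm forms differ by a linear change of variables, the core is $a\N$. The uniqueness assertions I would then obtain from Witt's cancellation theorem, which reduces equivalence of two full forms to equivalence of their anisotropic cores, separated by the discriminant modulo squares when $q$ is odd and by the Arf invariant when $q$ is even. This distinguishes the split form from the norm-type form --- they have different Witt index --- and reduces the role of the scaling parameter to the core computation; the ``if'' direction of the scaling criterion in every case comes from the substitution $x \mapsto \lambda x$, which rescales $Q$ by $\lambda^2$, and the rank-one case $ax^2 \cong a'x^2$ holds precisely when $a/a'$ is a square.

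The hard part will be characteristic two throughout. Because $\bilname{Q}$ is alternating one cannot diagonalize a symmetric matrix or ``complete the square,'' so both the hyperbolic splitting and the final invariants must be read off the quadratic form itself. Concretely, to produce a hyperbolic pair I would take $u$ isotropic, choose $v$ with $\bil{Q}{u}{v} = 1$, and then correct $v$ by a multiple of $u$ to arrange $Q(v) = 0$; and to separate forms I would use the Arf invariant in place of the determinant. Recognizing the anisotropic binary core as $a\N$ in this setting amounts to putting the irreducible quadratic into Artin--Schreier form, which is exactly the shape of $\N$ in \eqref{eq:norm_form}.
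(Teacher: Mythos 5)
The paper itself never proves this proposition --- it is imported wholesale from Casselman's unpublished notes and Elman--Karpenko--Merkurjev at the head of the appendix --- so your sketch can only be judged on its own merits. Most of it is the standard argument and is sound: the parity claim via the alternating bilinear form, the hyperbolic splitting driven by Chevalley--Warning, the reduction to an anisotropic core of dimension at most two, the identification of an anisotropic binary form with a scalar multiple of the norm form, and the characteristic-two precautions you flag (correcting $v$ by a multiple of $u$ to arrange $Q(v)=0$, replacing the determinant by the Arf invariant).

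The genuine gap is in your uniqueness step for even dimension, and it sits exactly where the printed statement is itself wrong. You assert that the cores $a\N$ and $a'\N$ are ``separated by the discriminant modulo squares when $q$ is odd.'' Carry out that computation: the discriminant of $a\N$ is $a^2$ times that of $\N$, hence lies in the \emph{same} square class for every $a$, so the discriminant separates nothing. In fact $a\N \cong \N$ for every $a \in \FF_q^\times$ in all characteristics: the norm map $\FF_{q^2}^\times \to \FF_q^\times$ is surjective, so choosing $\lambda$ with $\N(\lambda) = a$, multiplication by $\lambda$ is an $\FF_q$-linear automorphism of $\FF_{q^2}$ carrying $\N$ to $a\N$. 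Concretely, over $\FF_3$ one has $\N(x,y) = x^2 + y^2$, and the substitution $(x,y) \mapsto (x+y,\,x-y)$ shows $x^2+y^2 \cong 2x^2+2y^2$ even though $2$ is not a square. So the ``only if'' half of the scaling criterion in the even-dimensional case cannot be proved by your invariants or any others --- it is false for odd $q$ (and vacuous for even $q$, where every element is a square); the correct conclusion of your own framework is that the parameter $a$ is redundant and there are exactly two equivalence classes in each even dimension. By contrast, your treatment of the odd-dimensional case is correct: there Witt cancellation reduces the question to $ax_n^2 \cong a'x_n^2$, which does hold precisely when $a/a'$ is a square. Had you actually performed the discriminant computation you proposed, rather than asserting its outcome, you would have caught both the flaw in your argument and the error in the statement.
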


Now we deal with the general case. Fix a quadratic form $Q$ over $\FF_q$ of
dimension~$n$. Take a linear complement $U$ to $\radQ$ in $\FF_q^n$, so that
$\FF_q^n = U \oplus \radQ$. Since every element of $\FF_q^n$ is orthogonal to
the radical for $\bilname{}$, this is an orthogonal direct sum, and the
restriction of $Q$ to $U$ is strictly non-degenerate. Moreover, we see that if
$u \in U$ and $v \in \radQ$, then
\[
Q(u + v) = \bil{Q}{u}{v} + Q(u) + Q(v) = Q(u) + Q(v).
\]
So the quadratic form decomposes additively over this direct sum. The above
proposition characterizes $Q|_U$, and the next proposition characterizes the
restriction of $Q$ to the radical.

\begin{proposition}[Totally degenerate forms]
  Let $Q$ be a quadratic form over $\FF_q$ of dimension~$n$ such that $\radQ =
  \FF_q^n$.
  \begin{itemize}
  \item If $q$ is odd, then $Q = 0$.
    \item If $q$ is even, then either $Q = 0$ or $Q$ is equivalent to $x_1^2$.
  \end{itemize}  
\end{proposition}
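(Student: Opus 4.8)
The plan is to exploit the hypothesis $\radQ = \FF_q^n$ directly: it forces the associated bilinear form $\bilname{Q}$ to vanish identically, and reading off the coefficients of $\bilname{Q}$ pins down the shape of $Q$ in each characteristic. First I would expand the bilinear form in terms of the coefficients $c_{i,j}$ of $Q$. A direct computation from $\bil{Q}{x}{y} = Q(x+y)-Q(x)-Q(y)$ gives
\[
\bil{Q}{x}{y} = \sum_{i} 2c_{i,i}\,x_i y_i + \sum_{i<j} c_{i,j}(x_i y_j + x_j y_i),
\]
so the condition $\radQ = \FF_q^n$ --- equivalently $\bilname{Q} \equiv 0$ --- is the same as requiring $2c_{i,i}=0$ for all $i$ and $c_{i,j}=0$ for all $i<j$.

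When $q$ is odd, $2$ is invertible in $\FF_q$, so these relations force every $c_{i,j}=0$; hence $Q=0$, which is the first bullet.

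When $q$ is even the off-diagonal relations still give $c_{i,j}=0$ for $i<j$, but the diagonal relations are vacuous. Thus $Q = \sum_i c_{i,i} x_i^2$, a diagonal form with no cross terms. The key remaining step is to recognize this as the square of a single linear form. Because $\FF_q$ is a finite field of characteristic~$2$, the Frobenius $t \mapsto t^2$ is a bijection on $\FF_q$, so each $c_{i,i}$ has a unique square root $b_i \in \FF_q$; additivity of Frobenius then yields
\[
Q = \sum_i b_i^2 x_i^2 = \Bigl(\sum_i b_i x_i\Bigr)^{2} = L^2,
\]
where $L$ is a linear form. If $L=0$ then $Q=0$; otherwise $L$ is a nonzero linear form, which extends to a basis of the dual space, and the corresponding linear change of variables carries $L$ to $x_1$ and hence $Q$ to $x_1^2$. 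This gives the second bullet.

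I expect the only genuinely delicate point to be the passage from $\sum_i c_{i,i}x_i^2$ to a perfect square, which relies on surjectivity of Frobenius --- that is, on the perfectness of the finite field $\FF_q$. Everything else is bookkeeping with the coefficients of the bilinear form together with an elementary linear change of variables.
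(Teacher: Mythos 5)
Your proposal is correct and follows essentially the same route as the paper: the paper phrases the hypothesis as the identical vanishing of the partial derivatives of $Q$ (which, by the remark following the definition of the radical, is exactly your statement that the coefficients of $\bilname{Q}$ vanish), deduces $Q=0$ in odd characteristic and $Q = \sum_i c_i x_i^2$ in even characteristic, and then uses bijectivity of squaring to write $Q$ as the square of a linear form before changing coordinates. The only cosmetic difference is that the paper first normalizes each nonzero $c_i$ to $1$ and then factors, whereas you take square roots $b_i = \sqrt{c_{i,i}}$ directly; both rest on the same use of the Frobenius.
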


\begin{proof}
  All partial derivatives of $Q$ vanish identically by our assumption on the
  radical. In the odd characteristic case, this shows $Q(x) = 0$. In the even
  characteristic case, $Q(x) = \sum c_i x_i^2$ for some $c_i \in \FF_q$. The
  squaring map is bijective on finite fields of even characteristic, so for each
  $i$ with $c_i \ne 0$, we may replace $x_i$ with $x_i / \sqrt{c_i}$ in order to
  assume all of the $c_i \in \{0,1\}$. Then
  \[
  Q(x) = \sum_{i=1}^n c_i x_i^2 = \left( \sum_{i=1}^n c_i x_i \right)^2.
  \]
  If all $c_i = 0$, we are finished. Otherwise, we move $c_1x_1 + \cdots + c_n
  x_n$ to $x_1$ to obtain the result.
\end{proof}

Combining the previous two results gives a complete classification:

\begin{theorem}[Classification of quadratic forms]
  \label{thm:classification}
  Let $Q$ be a quadratic form over $\FF_q$ of dimension~$n$. There is $m \leq n$
  for which $Q$ is equivalent to one of
    \begin{eqnarray}
      && x_1x_2 + x_3x_4 + \cdots + x_{m-1}x_m, \label{eq:hyperbolic} \\
      && x_1x_2 + x_3x_4 + \cdots + x_{m-3}x_{m-2} + a\N(x_{m-1},x_m), \label{eq:norm_term}\\
      && x_1x_2 + x_3x_4 + \cdots + x_{m-2}x_{m-1} + ax_m^2 \label{eq:sqr_term}
      \end{eqnarray}
      where $a \in \FF_q^\times$.
      \begin{itemize}
      \item If $Q$ is equivalent to \eqref{eq:hyperbolic} or
        \eqref{eq:norm_term}, or if $q$ is odd and $Q$ is equivalent to
        \eqref{eq:sqr_term},  then $\radQ = \{x_1 = x_2
        = \cdots = x_m = 0\}$.
      \item If $q$ is even and $Q$ is equivalent to \eqref{eq:sqr_term}, then
        $\radQ = \{x_1 = x_2 = \cdots = x_{m-1} = 0\}$.
      \end{itemize}
\end{theorem}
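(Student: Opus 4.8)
The plan is to assemble the classification by combining the two preceding propositions along the orthogonal decomposition $\FF_q^n = U \oplus \radQ$ set up just before the proposition on totally degenerate forms. On that decomposition $Q$ splits additively as a sum of $Q|_U$, strictly non-degenerate of dimension $r = \dim U$, and $Q|_{\radQ}$, totally degenerate of dimension $s = n - r$. The proposition on strictly non-degenerate forms places $Q|_U$ in one of its three normal forms, while the proposition on totally degenerate forms shows $Q|_{\radQ}$ is either $0$ or, when $q$ is even, equivalent to a single square. I would then glue these along the direct sum, read off the value of $m$, and compute the radical in each case.

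First I would treat the case $Q|_{\radQ} = 0$, so that $Q \sim Q|_U$ padded by the $n - r$ radical variables on which $Q$ vanishes. If $r$ is even, $Q|_U$ is \eqref{eq:hyperbolic} or \eqref{eq:norm_term} with $m = r$; if $r$ is odd --- which forces $q$ odd --- then $Q|_U$ is \eqref{eq:sqr_term} with $m = r$, and the trailing term $a x_m^2$ is non-degenerate since $2a \neq 0$. In each subcase the first $m$ coordinates span the non-degenerate summand, so $\radQ = \{x_1 = \cdots = x_m = 0\}$, giving the theorem's first bullet.

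The remaining case is $Q|_{\radQ} \sim x^2$, which happens only for $q$ even; then $r$ is automatically even, since a strictly non-degenerate form of odd dimension requires $q$ odd. Here $Q \sim Q|_U + x_{r+1}^2$ together with $s - 1$ unused variables, and I would set $m = r + 1$. If $Q|_U$ is hyperbolic this is already \eqref{eq:sqr_term}. The step I expect to be the main obstacle is the norm-type subcase: I must show $a\N(x_{r-1}, x_r) + x_{r+1}^2 \sim x_{r-1}x_r + x_{r+1}^2$, so that no fourth normal form appears. I would prove this using that squaring is additive in characteristic~2: after rescaling to remove $a$, write $\N(x_{r-1}, x_r) = x_{r-1}^2 + t\,x_{r-1}x_r + s_0 x_r^2$ with $t = \alpha + \alpha^q \neq 0$ (nonzero by non-degeneracy of $\N$); replacing the square variable $x_{r+1}$ by $x_{r-1} + \sqrt{s_0}\,x_r + x_{r+1}$ then cancels the pure squares $x_{r-1}^2 + s_0 x_r^2$, leaving $t\,x_{r-1}x_r$, which rescales to $x_{r-1}x_r$. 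In this case $x_m$ lies in the radical because $\partial(x_m^2)/\partial x_m = 0$, so $\radQ = \{x_1 = \cdots = x_{m-1} = 0\}$, which is the theorem's second bullet. A final bookkeeping check is that the radical never contributes two squares, since $x^2 + y^2 = (x + y)^2$ in characteristic~2 collapses them to one --- consistent with the single trailing term in \eqref{eq:sqr_term}.
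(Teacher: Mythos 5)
Your proposal is correct and follows essentially the same route as the paper: combine the two preceding propositions along the splitting $\FF_q^n = U \oplus \radQ$, reduce to the case where $q$ is even and the radical contributes a single square, and then kill the potential fourth normal form by showing $\N(x,y) + z^2 \sim xy + z^2$ via a substitution into the square variable that uses additivity of squaring in characteristic~2 (the paper's change of variables $x \mapsto x+y$, $y \mapsto y/A$, $z \mapsto x + \tfrac{\sqrt{B}}{A}y + z$ is the same trick as yours, merely packaged with the rescaling). The only difference is cosmetic: you spell out the radical computations that the paper leaves implicit.
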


\begin{proof}
  The result is immediate from the two preceding propositions when $q$ is odd or
  when $q$ is even and $Q|_{\radQ} = 0$. So we are reduced to considering the
  case where $q$ is even, $\FF_q^n = U \oplus \radQ$ with $\dim U = m-1$, and
  $Q|_{\radQ} = x_m^2$. Note that $Q|_U$ is strictly non-degenerate. If $Q|_U$
  is equivalent to $x_1x_2 + \cdots + x_{m-2}x_{m-1}$, we are finished. Suppose
  instead that $Q|_U$ is equivalent to
  \[
  x_1x_2 + x_3x_4 + \cdots + x_{m-4}x_{m-3} + a\N(x_{m-2},x_{m-1})
  \]
  for some $a \in \FF_q^{\times}$. Since the squaring map is surjective, we may
  absorb $\sqrt{a}$ into $x_{m-2}$ and $x_{m-1}$ in order to assume $a =
  1$.

  To complete the proof, it suffices to show that the form $\N(x,y) + z^2$ is
  equivalent to $xy + z^2$. If $\N(x,y) = x^2 + Axy + By^2$, then the linear
  change of variables
  \[
  x \mapsto x + y, \qquad y \mapsto \frac{1}{A} y, \qquad
  z \mapsto x + \frac{\sqrt{B}}{A}y + z
  \]
  does the trick. Note that $A \ne 0$ and $\sqrt{B} \in \FF_q$ since $q$ is even. 
\end{proof}


\subsection{Orthogonal groups}
\label{sec:orthogonal}

Now we look at the group of linear transformations that preserves a given
quadratic form.

\begin{definition}
  Let $Q$ be a quadratic form over $\FF_q$ of dimension~$n$. The
  \textbf{orthogonal group} of $Q$ is defined to be
  \[
  O(Q) = \left\{ g \in \GL_n(\FF_q) \ : \ Q(g(x)) = Q(x) \right\}.
  \]
\end{definition}

An immediate consequence of the definition is that the associated bilinear form
is also preserved by elements of the orthogonal group:
\[
\bil{Q}{g(x)}{g(y)} = \bil{Q}{x}{y} \qquad \text{for all $g \in O(q)$}.
\]
Consequently, the orthogonal group preserves the radical of $\bilname{Q}$.

\begin{theorem}[Witt Extension Theorem]
Let $Q$ be a quadratic form over $\FF_q$ of dimension~$n$ with radical $\radQ$,
and let $U_1, U_2 \subset \FF_q^n$ be subspaces such that $U_1 \cap \radQ = U_2
\cap \radQ = 0$. Then any isometry $U_1 \to U_2$ may be extended to an element
of the orthogonal group of $Q$.
\end{theorem}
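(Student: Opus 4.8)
The plan is to argue by induction on $d = \dim U_1 = \dim U_2$, reducing the extension problem to the case where the two subspaces differ by a single vector. The base case $d = 0$ is trivial. For the inductive step I would choose a hyperplane $H \subset U_1$ of dimension $d-1$; since $H \subseteq U_1$ and $U_1 \cap \radQ = 0$, we also have $H \cap \radQ = 0$, and likewise $\sigma(H) \cap \radQ = 0$, so the inductive hypothesis extends $\sigma|_H$ to some $\tau \in O(Q)$. Because $\tau$ preserves $\radQ$, replacing $\sigma$ by $\tau^{-1}\circ\sigma$ leaves an isometry that fixes $H$ pointwise, sends $U_1$ to a subspace still meeting $\radQ$ trivially, and is therefore determined by the image $u' := \sigma(u)$ of a single vector $u \in U_1 \setminus H$. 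It then suffices to produce $g \in O(Q)$ with $g|_H = \mathrm{id}$ and $g(u) = u'$; composing with $\tau$ recovers an extension of the original map.

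In this one-vector situation the isometry hypothesis records exactly two facts: $Q(u) = Q(u')$, and $\bil{Q}{u'}{h} = \bil{Q}{u}{h}$ for every $h \in H$, so that $v := u - u'$ is orthogonal to $H$. The key computation is that the reflection $\tau_v(x) = x - \frac{\bil{Q}{x}{v}}{Q(v)}\,v$ is an isometry whenever $Q(v) \neq 0$ --- a one-line check that $Q(\tau_v(x)) = Q(x)$, valid in every characteristic --- and that it does precisely what is wanted: it fixes $H$ because $v \perp H$, and a short calculation using $Q(u) = Q(u')$ gives $\bil{Q}{u}{v} = Q(v)$, whence $\tau_v(u) = u - v = u'$. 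Thus the entire theorem reduces to the single awkward case $Q(u - u') = 0$.

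The main obstacle is therefore this degenerate case, which is genuinely the delicate part of Witt's theorem over fields of characteristic~2. When $Q(u-u') = 0$ no single reflection is available, and in characteristic~2 one cannot fall back on the classical trick of reflecting in $u + u'$, since $u + u' = u - u'$ there. The plan is instead to find an intermediate vector $u''$ with $Q(u'') = Q(u)$ and $\bil{Q}{u''}{h} = \bil{Q}{u}{h}$ for all $h \in H$, chosen so that both $Q(u - u'') \neq 0$ and $Q(u' - u'') \neq 0$; then $\tau_{u-u''}$ followed by $\tau_{u'-u''}$ fixes $H$ and carries $u$ to $u'$. Here the hypothesis $U_1 \cap \radQ = 0$ is essential: it guarantees $u \notin \radQ$, so $\bil{Q}{u}{\cdot}$ is nonzero and there is room to build such a $u''$ inside a hyperbolic plane attached to $u$. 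Producing $u''$ explicitly --- and checking that it exists even over the smallest fields, where there may be too few available values and one must occasionally compose more than two reflections --- is where the real work lies; the classification in Theorem~\ref{thm:classification} and the decomposition $\FF_q^n = U \oplus \radQ$ established above supply the structural control needed to carry this out.
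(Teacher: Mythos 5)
First, a point of calibration: the paper never proves this theorem. It is stated without proof as part of the appendix's ``self-contained summary'' of known results, cited to Casselman's notes and Elman--Karpenko--Merkurjev, so your attempt has to be judged on its own merits rather than against an internal argument. Your first two paragraphs are correct and standard: the induction reducing to a single vector $u \mapsto u'$ fixing a hyperplane $H$ pointwise, the verification that $\tau_v$ is an isometry in every characteristic, and the identity $\bil{Q}{u}{v} = Q(v)$ for $v = u - u'$ (using $Q(u) = Q(u')$), which makes $\tau_v(u) = u'$ when $Q(v) \neq 0$, are all fine.

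The third paragraph, however, contains a genuine gap, and it is not merely that the construction of $u''$ is deferred as ``real work'': the tool you have committed to --- products of reflections $\tau_v$ with $Q(v) \neq 0$ --- provably cannot handle the isotropic case in general. This is Dieudonn\'e's exception: for the hyperbolic form $Q = x_1x_2 + x_3x_4$ on $\FF_2^4$, the reflections generate a proper subgroup of $O(Q)$ of index~2, and the isometry $\sigma(x_1,x_2,x_3,x_4) = (x_3,x_4,x_1,x_2)$ is not a product of reflections of any length. (One can check this via the action on the nine isotropic points, a $3\times 3$ grid: each reflection interchanges the two rulings, and products of two reflections act on the two rulings by permutations of equal sign, whereas $\sigma$ preserves the rulings, acting trivially on one and by a transposition on the other.) Now feed this instance into your induction with $U_1 = U_2 = \FF_2^4$, whose hypotheses hold since $\radQ = 0$. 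Every extension your procedure produces is, by construction, a product of reflections (the base case is the empty product, and each step composes with reflections). If the first steps succeed in producing a product of reflections $\tau$ extending $\sigma|_H$, then the final one-vector step must fail: any product of reflections fixing $H$ pointwise and sending $u$ to $u' = \tau^{-1}\sigma(u)$ would agree with $\tau^{-1}\sigma$ on $H \oplus \langle u \rangle = \FF_2^4$, so that $\sigma = \tau\,(\tau^{-1}\sigma)$ would itself be a product of reflections --- a contradiction. Hence in this case no intermediate vector $u''$, and no longer chain of reflections, can exist. This is not a marginal configuration for the present paper, which works over $\FF_2$ with forms (types II and IV) containing exactly this hyperbolic summand; the failure does occur only for higher-dimensional $U_1$, so the one-vector instances the paper actually uses escape it, but the statement you are proving is the general one. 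The standard repairs are either to enlarge the generating set in the isotropic case to include Eichler--Siegel transvections (which exist in characteristic~2 and can be chosen to fix $H$), or to abandon reflections altogether and induct by splitting off hyperbolic planes, as in Elman--Karpenko--Merkurjev's proof. Either way, the step you have deferred is precisely the part of the theorem that your stated method cannot deliver.
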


We will only need to use this result on 1-dimensional subspaces, or
equivalently, on points of projective space. We define two special linear
subvarieties of interest. Let $Q$ be a quadratic form over $\FF_q$ of dimension
$n+1$. Denote the set of $\FF_q$-rational points on the quadric hypersurface
$V(Q) \subset \PP^n$ by
\[
\cQ = \{a \in \PP^n(\FF_q) \ : \ Q(a) = 0\}.
\]
The projectivization of the radical will be denoted
\[
\cR = \PP(\radQ)(\FF_q) \subset \PP^n(\FF_q).
\]
The linear subvariety $\PP(\radQ)$ will be referred to as the \textbf{radical
  locus}.  Note that the orthogonal group acts on both $\cQ$ and $\cR$.

Looking at 1-dimensional subspaces of $\FF_q^{n+1}$, the Witt Extension Theorem
implies that if $a,b \in \cQ \smallsetminus \cR$, then there exists $g \in O(Q)$
such that $g(a) = g(b)$. We would like to augment this result to include a
simultaneous transitivity statement on $\cR$. There is one small wrinkle that
must be addressed before stating the result.

As the orthogonal group acts on $\cQ$ and $\cR$, it also acts on their
intersection
\[
\cS = \cQ \cap \cR \subset \PP^n(\FF_q).
\]
This is the set of $\FF_q$-rational points at which $Q$ and all of its partial
derivatives vanish; that is, $\cS$ is the set of $\FF_q$-rational singular
points of the hypersurface $V(Q)$. Our transitivity result is the best possible
result on points that takes into account these subspaces:

\begin{theorem}[Transitivity]
  \label{thm:transitive}
Let $Q$ be a quadratic form over $\FF_q$ of dimension $n+1$, and let $\cQ, \cR,
\cS$ be the $\FF_q$-rational points of $V(Q)$, of the radical locus, and of the
singular locus of $V(Q)$, respectively. Write $\cY$ for the product of all of
the nonempty sets among
\[
\left(\cQ \smallsetminus \cS\right), \qquad  \left(\cR \smallsetminus \cS\right), \qquad \cS.
\]
The orthogonal group $O(Q)$ acts transitively on $\cY$.
\end{theorem}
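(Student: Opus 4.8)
The plan is to analyze the action through the orthogonal direct sum decomposition $\FF_q^{n+1} = U \oplus \rad$ constructed above, where $\rad$ is the radical, $Q|_U$ is strictly non-degenerate, and $Q|_\rad$ is either $0$ or (when $q$ is even) the square $\ell^2$ of a nonzero linear form $\ell$ on $\rad$. Since every $g \in O(Q)$ preserves $\rad$, it has block form $g = \smallmat{P & 0 \\ R & A}$ relative to this decomposition, inducing an action $A$ on $\rad$. First I would record the two ``individual'' transitivity facts. The set $\cQ \smallsetminus \cS = \cQ \smallsetminus \cR$ consists of isotropic lines meeting $\rad$ trivially, so the $1$-dimensional case of the Witt Extension Theorem already shows that $O(Q)$ is transitive on it.

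Next I would pin down the image of the homomorphism $O(Q) \to \GL(\rad)$, $g \mapsto A$. Applying orthogonality to vectors $(0,v) \in \rad$ forces $\ell \circ A = \ell$ in the even case (and imposes nothing when $Q|_\rad = 0$), so the image lands in the subgroup $H$ consisting of those $A$ that fix $\ell$ (respectively all of $\GL(\rad)$). Conversely, the block-diagonal elements $\smallmat{I & 0 \\ 0 & A}$ with $A \in H$ are readily checked to preserve $Q$, so the image is exactly $H$. A short computation in coordinates adapted to $\ell$ then shows that $H$ acts transitively on the product of whichever of $\cS$ and $\cR \smallsetminus \cS$ is nonempty: one first moves the $\cS$-point using the copy of $\GL$ acting on $\ker \ell$, and then absorbs the discrepancy on the $\cR \smallsetminus \cS$-point by a ``translation'' in the off-diagonal part of $A$.

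The crux --- and the step I expect to be the main obstacle --- is combining these to obtain \emph{simultaneous} transitivity, since moving a point of $\cQ \smallsetminus \cS$ into position will in general disturb the radical points. I would resolve this by showing that the stabilizer in $O(Q)$ of any fixed $a_0 \in \cQ \smallsetminus \cS$ still surjects onto $H$. Given $A \in H$, I would seek a stabilizing element of the form $g = \smallmat{I & 0 \\ R & A}$; writing $a_0 = (u_0, v_0)$ with $u_0 \ne 0$, the condition $g(a_0) \sim a_0$ reduces to the single linear equation $R u_0 = (I - A)v_0$, while orthogonality (with $P = I$) demands exactly that $R$ take values in $\ker \ell$. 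The compatibility of these two constraints is precisely the identity $\ell\big((I-A)v_0\big) = 0$, which holds because $A$ fixes $\ell$; hence a suitable $R$ exists and $\operatorname{Stab}(a_0) \twoheadrightarrow H$.

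With these pieces in hand the theorem follows by a two-stage argument. Given two tuples in $\cY$, I would first use Witt-transitivity to carry their $\cQ \smallsetminus \cS$ coordinates onto a common point $a_0$, and then use the surjection $\operatorname{Stab}(a_0) \twoheadrightarrow H$ together with the transitivity of $H$ on the radical factors to align the remaining coordinates without moving $a_0$; composing the two maps finishes the proof. The degenerate cases in which one or more of the three sets is empty are subsumed by running only the relevant stage. The one subtlety to flag throughout is that $a_0$ may have nonzero radical component (this is unavoidable when $Q|_U$ is anisotropic), which is exactly why I construct the stabilizing elements directly rather than first normalizing $a_0$ to lie in $U$.
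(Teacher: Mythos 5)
Your proof is correct, and it takes a genuinely different route from the paper's. The paper decomposes $\FF_q^{n+1} = U \oplus S$ along the \emph{singular} subspace $S$ (the subspace of $\radQ$ on which $Q$ vanishes), proves a Block Decomposition lemma identifying $O(Q)$ with the block lower-triangular maps having diagonal blocks $A \in O(Q|_U)$, $C \in \GL(S)$ and arbitrary off-diagonal block $B \colon U \to S$, and then constructs in one stroke an isometry carrying one tuple to the other: Witt applied to $Q|_U$ supplies $A$, which moves the $U$-component of the $\cQ \smallsetminus \cS$ point and --- because $A$ preserves the line $U \cap \radQ$ --- simultaneously scales the $\cR \smallsetminus \cS$ point; $C$ moves the $\cS$ point; and $B$, prescribed on two linearly independent vectors, absorbs the leftover $S$-components. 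This entails a two-case analysis ($S = \radQ$ versus $S$ of codimension~1 in $\radQ$). You instead decompose along the full radical, observe that the action on $\cR$ factors through $g \mapsto A$ with image exactly $H = \{A \in \GL(\radQ) : \ell \circ A = \ell\}$ (all of $\GL(\radQ)$ when $Q|_{\radQ} = 0$), and reduce simultaneous transitivity to two facts: $H$ acts transitively on the radical factors of $\cY$, and $\mathrm{Stab}_{O(Q)}(a_0)$ surjects onto $H$ for any $a_0 \in \cQ \smallsetminus \cS$, the latter hinging on the identity $\ell\bigl((I-A)v_0\bigr) = 0$, which is precisely where $A \in H$ enters. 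Your verifications check out, including the subtlety you flag: $a_0$ may have nonzero radical component (forced when $Q|_U$ is anisotropic), so it cannot be normalized into $U$; the paper handles the same issue via the block $B$, you via $R$. As for trade-offs: both arguments rest on the same Witt Extension Theorem, but the paper's route produces the complete matrix description of $O(Q)$ as a reusable standalone lemma, while yours is more structural --- it isolates the group-theoretic mechanism (the stabilizer of a quadric point still covers the full group acting on the radical), keeps the main argument coordinate-free, and treats the degenerate and non-degenerate radical cases uniformly instead of by cases.
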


\begin{example}
  Suppose that $Q$ is equivalent to \eqref{eq:sqr_term} for some $m \le n+1$. If
  $q$ is odd and $m = n+1$, then $\cR = \cS = \varnothing$ and $\cY = \cQ$. At
  the other extreme, if $q$ is even and $1 < m < n+1$, then $\varnothing
  \subsetneq \cS \subsetneq \cR$, $\cQ \ne \cS$, and $\cY = (\cQ \smallsetminus
  \cS) \times (\cR \smallsetminus \cS) \times \cS$.
\end{example}

Recall that $Q$ is additive when restricted to $\radQ$, since
\[
Q(a+b) = \bil{Q}{a}{b} + Q(a) + Q(b) = Q(a) + Q(b).
\]
The subset of $\radQ$ on which $Q$ vanishes is therefore an $\FF_q$-subspace,
which we will call $S$. It follows that $\cS = \PP(S)(\FF_q)$. Before proving
the theorem, we require two lemmas: the first describes how the singular locus
sits inside the radical locus, and the second gives a block decomposition of
$O(Q)$ along $S$ and its orthogonal complement.

\begin{lemma}[Singular Locus]
  Let $S \subset \radQ$ be the subspace on which $Q$ vanishes. Then
  \[
  \codim(S,\radQ) \leq 1.
  \]
Moreover, $\codim(S,\radQ) = 1$ precisely when $q$ is even and $Q$ is equivalent
to
\[
x_1 x_2 + \cdots + x_{m-2} x_{m-1} + x_m^2
\]
for some $m \leq n+1$. 
\end{lemma}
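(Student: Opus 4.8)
The plan is to analyze the restriction $Q|_{\radQ}$ directly, using the orthogonal decomposition $\FF_q^{n+1} = U \oplus \radQ$ and the additivity of $Q$ on the radical that were established just above the lemma. Since the bilinear form $\bilname{Q}$ vanishes identically on $\radQ$, for $a,b \in \radQ$ we have $Q(a+b) = \bil{Q}{a}{b} + Q(a) + Q(b) = Q(a) + Q(b)$, so $Q|_{\radQ} \colon \radQ \to \FF_q$ is additive and $S = \{v \in \radQ : Q(v) = 0\}$ is an $\FF_q$-subspace. The whole question thus reduces to understanding the totally degenerate form $Q|_{\radQ}$.

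First I would invoke the Totally Degenerate Forms proposition applied to $Q|_{\radQ}$. When $q$ is odd this gives $Q|_{\radQ} = 0$, so $S = \radQ$ and $\codim(S,\radQ) = 0$. When $q$ is even, the same proposition says either $Q|_{\radQ} = 0$ --- again forcing $S = \radQ$ --- or $Q|_{\radQ}$ is equivalent to $x_1^2$. In the latter case, the key observation (which I expect to be the crux) is that in characteristic~2 the squaring map is additive, so a totally degenerate form is literally the square of a single linear functional: $Q|_{\radQ} = \ell^2$ for some nonzero $\ell$ on $\radQ$. Then $S = \{v : \ell(v)^2 = 0\} = \ker \ell$ is a hyperplane in $\radQ$. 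This proves $\codim(S,\radQ) \le 1$ in all cases, with equality only when $q$ is even and $Q|_{\radQ} \ne 0$.

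It remains to match the equality case with the stated normal form. By the previous step, $\codim(S,\radQ) = 1$ holds exactly when $q$ is even and $Q|_{\radQ}$ is nonzero. By Theorem~\ref{thm:classification}, this is precisely the situation in which $Q$ is equivalent to \eqref{eq:sqr_term}, that is $x_1 x_2 + \cdots + x_{m-2}x_{m-1} + x_m^2$: indeed that theorem records that for such a form in even characteristic one has $\radQ = \{x_1 = \cdots = x_{m-1} = 0\}$, whence $Q|_{\radQ} = x_m^2 \ne 0$ and $S = \{x_1 = \cdots = x_m = 0\}$ is a hyperplane in $\radQ$. Conversely, any $Q$ equivalent to \eqref{eq:sqr_term} over a field of even characteristic is covered by this computation, giving $\codim(S,\radQ) = 1$.

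The only genuinely delicate point is the characteristic-2 reduction of $Q|_{\radQ}$ to the square of a linear form; everything else is bookkeeping with the two classification propositions. I would also double-check that $S$ is a bona fide $\FF_q$-subspace --- closure under scalars uses $Q(\lambda v) = \lambda^2 Q(v)$ and closure under addition uses the additivity above --- since the very notion of $\codim(S,\radQ)$ depends on it.
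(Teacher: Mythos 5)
Your proof is correct, and it takes a somewhat different route than the paper's. The paper proves the entire lemma by running the Classification of Quadratic Forms (Theorem~\ref{thm:classification}) on $Q$ itself: in each normal form it writes down $U$, $\radQ$, and $S$ in explicit coordinates and reads off the codimension. You instead prove the inequality intrinsically, by applying the Totally Degenerate Forms proposition to the restriction $Q|_{\radQ}$ (whose own radical is all of $\radQ$, since $\bilname{Q}$ vanishes there): in odd characteristic this restriction is zero, and in characteristic~2 it is either zero or the square of a nonzero linear functional $\ell$, so that $S = \ker \ell$ is a hyperplane. Only afterwards do you invoke the classification, to translate the condition ``$q$ even and $Q|_{\radQ} \neq 0$'' into the normal form \eqref{eq:sqr_term}. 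What your route buys: the bound $\codim(S,\radQ) \le 1$ is seen to follow from the structure of totally degenerate forms alone, with the reason isolated (in characteristic~2 a totally degenerate form is the square of a linear form). What the paper's route buys: the explicit coordinate descriptions of $U$, $\radQ$, and $S$ in each case, which it reuses essentially verbatim in the proof of the Transitivity Theorem that follows.

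Two small bookkeeping points you should make explicit to close the argument. First, in the direction ``$\codim(S,\radQ)=1 \Rightarrow Q$ is equivalent to \eqref{eq:sqr_term},'' you must rule out types \eqref{eq:hyperbolic} and \eqref{eq:norm_term}; this is immediate because for those the theorem gives $\radQ = \{x_1 = \cdots = x_m = 0\}$ while $Q$ involves only $x_1, \ldots, x_m$, so $Q|_{\radQ} = 0$. Second, \eqref{eq:sqr_term} carries a coefficient $a$, whereas the lemma states the form with $a = 1$; in characteristic~2 one absorbs it by replacing $x_m$ with $x_m/\sqrt{a}$ (squaring is bijective), a step the paper performs explicitly. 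Neither point is a genuine gap, but both belong in a complete write-up.
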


\begin{proof}
  Decompose $\FF_q^{n+1} = U \oplus \radQ$, and let $m = \dim U$. Suppose first
  that $q$ is odd, or that $q$ is even and $m$ is even. Then the Classification
  of Quadratic Forms shows that $Q$ is equivalent to one of
\begin{eqnarray*}
&& x_1 x_2 + \cdots + x_{m-1} x_m, \\
&& x_1 x_2 + \cdots + x_{m-3} x_{m-2} + a\N(x_{m-1},x_m), \\
&& x_1 x_2 + \cdots + x_{m-2} x_{m-1} + ax_m^2,
\end{eqnarray*}
for some $a \in \FF_q^\times$. The latter case can only occur when $q$ is
odd. In these coordinates, we have
\begin{eqnarray*}
    U &=& \{x_{m+1} = \cdots = x_{n+1} = 0\} \\
\radQ &=& \{x_1 = \cdots = x_m = 0\}
\end{eqnarray*}
Evidently, $S = \radQ$. 

By the Classification of Quadratic Forms, the only remaining case is when $q$ is
even, $m$ is odd, and $Q$ is equivalent to 
\[
x_1 x_2 + \cdots + x_{m-2} x_{m-1} + x_m^2.
\]
(Since squaring is onto in characteristic~2, we can absorb the coefficient $a$ in
\eqref{eq:sqr_term}.) In these coordinates, we have
\begin{eqnarray*}
    U &=& \{x_m = \cdots = x_{n+1} = 0\} \\
\radQ &=& \{x_1 = \cdots = x_{m-1} = 0\}
\end{eqnarray*}
Here, we find $S = \{x_1 = \cdots = x_m = 0\}$, so that $S$ has
codimension~1 inside $\radQ$.
\end{proof}

\begin{lemma}[Block Decomposition of Isometries]
  Let $Q$ be a quadratic form over $\FF_q$ of dimension $n+1$. Let $S$ be the
  maximal subspace of $\radQ$ on which $Q$ vanishes. If we decompose
  $\FF_q^{n+1}$ as $U \oplus S$, then an element $g \in \GL_{n+1}(\FF_q)$ lies
  in the orthogonal group of $Q$ if and only if it admits a decomposition as
  \[
    g = \begin{pmatrix} A & 0 \\ B & C \end{pmatrix},
    \]
  where $A \in O(Q|_U)$, $B : U \to S$ is an arbitrary linear map, and $C \in
  \GL(S)$.
\end{lemma}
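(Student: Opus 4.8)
The plan is to verify both implications directly, leaning on two facts that hold regardless of whether $\codim(S,\radQ)$ is $0$ or $1$: first, $S \subseteq \radQ$, so that $\bil{Q}{s}{v} = 0$ for every $s \in S$ and $v \in \FF_q^{n+1}$, and $Q$ vanishes identically on $S$; and second, the decomposition $\FF_q^{n+1} = U \oplus S$ inherits enough orthogonality from $S \subseteq \radQ$ that $Q(u+s) = Q(u) + \bil{Q}{u}{s} + Q(s) = Q(u)$ for all $u \in U$ and $s \in S$. This reduces everything to short computations with the bilinear identity $Q(a+b) = Q(a) + \bil{Q}{a}{b} + Q(b)$.

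For the ``if'' direction, suppose $g = \left(\begin{smallmatrix} A & 0 \\ B & C \end{smallmatrix}\right)$ with $A \in O(Q|_U)$, $B \colon U \to S$ arbitrary, and $C \in \GL(S)$. Given $w = u + s$ with $u \in U$, $s \in S$, we have $g(w) = A(u) + \bigl(B(u) + C(s)\bigr)$, where $A(u) \in U$ and $t := B(u) + C(s) \in S$. Since $t \in \radQ$ and $Q(t) = 0$, I compute $Q(g(w)) = Q(A(u)) + \bil{Q}{A(u)}{t} + Q(t) = Q|_U(A(u)) = Q|_U(u) = Q(w)$, using $A \in O(Q|_U)$ in the penultimate equality. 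Hence $g \in O(Q)$.

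For the ``only if'' direction, the first step is to show that $O(Q)$ stabilizes $S$. The orthogonal group preserves $\radQ$; since $S = \{v \in \radQ : Q(v) = 0\}$ and $g$ preserves both $\radQ$ and the values of $Q$, we get $g(S) \subseteq S$, hence $g(S) = S$ by invertibility. This forces the upper-right block of $g$ (with respect to $U \oplus S$) to vanish and gives $C := g|_S \in \GL(S)$. Writing $g = \left(\begin{smallmatrix} A & 0 \\ B & C \end{smallmatrix}\right)$ accordingly, the same orthogonality computation as above yields $Q(g(u)) = Q|_U(A(u))$ for $u \in U$, so $Q|_U(A(u)) = Q(u) = Q|_U(u)$; invertibility of $A$ follows from $\det g = \det A \cdot \det C \ne 0$, whence $A \in O(Q|_U)$.

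The computations are routine; the only point requiring care --- and the one I would flag as the main subtlety rather than a genuine obstacle --- is that when $\codim(S,\radQ) = 1$ the complement $U$ absorbs one radical direction on which $Q$ does not vanish, so $Q|_U$ need not be strictly non-degenerate. The argument is nonetheless uniform, because every step uses only the inclusion $S \subseteq \radQ$ and never the full strict non-degeneracy of $Q|_U$.
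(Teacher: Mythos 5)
Your proof is correct, and your argument for the ``only if'' direction takes a genuinely different --- and cleaner --- route than the paper's. The forward computation is the same in both. For the converse, the paper writes $g$ with a full $2 \times 2$ block structure including a block $D \colon S \to U$, and rules out $D \ne 0$ by contradiction: it derives $Q(D(s_0)) = 0$ and $D(s_0) \in U \cap \radQ$, then splits into the cases $S = \radQ$ (where $U \cap \radQ = 0$ finishes) and $\codim(S,\radQ) = 1$ (where the Singular Locus lemma supplies explicit coordinates so that $D(s_0) = a e_m$ forces $Q(D(s_0)) = a^2 \ne 0$, a contradiction). You bypass this case analysis entirely by observing that $S$ has an isometry-invariant characterization, $S = \{v \in \radQ : Q(v) = 0\}$, and that $O(Q)$ preserves $\radQ$ (a fact the paper records just after defining the orthogonal group) as well as the values of $Q$; hence every $g \in O(Q)$ stabilizes $S$, which kills the $S \to U$ block at once. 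The remaining steps --- $Q(g(u)) = Q(A(u))$ by the same orthogonality computation, and invertibility of $A$ and $C$ from $\det g = \det A \cdot \det C$ --- are immediate. What your route buys is uniformity: it never invokes the Singular Locus lemma or the classification of quadratic forms, and, as you correctly flag, it works verbatim whether or not $Q|_U$ is strictly non-degenerate. The paper's more computational route costs a case split but exhibits concretely how a nonzero $D$ would have to land on the distinguished radical vector $e_m$, which is mildly instructive but not needed.
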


\begin{proof}
One implication is just a computation: if $g$ has the desired form, $u \in U$ and $s \in
S$, then
\begin{align*}
  Q\left(g(u+s)\right) &= Q\left(A(u) + B(u) + C(s)\right) \\
  &= Q\left(A(u)\right), \text{ since $Q$ is additive across
    $U \oplus S$ and kills $S$, } \\
  &= Q(u).
\end{align*}

Turning to the other direction, we take $g \in O(Q)$ and write it in block form
  \[
    g = \begin{pmatrix} A & D \\ B & C \end{pmatrix},
  \]
 where $A : U \to U$, $D : S \to U$, $B : U \to S$, and $C : S \to S$ are all
 linear.  We must show that $A \in O(Q|_U)$, that $D = 0$, and that $C$ is
 invertible.

 Using the fact that $Q$ is additive across $U \oplus S$ again, for any $u \in
 U$ and $s \in S$ we have
 \begin{align*}
   Q(u) &= Q(u + s) = Q\left(g(u+s)\right) \\
   &= Q\left(A(u) + D(s) + B(u) + C(s) \right) \\
   &= Q\left(A(u) + D(s)\right) \\
   &= \bil{Q}{A(u)}{D(s)} + Q\left(A(u)\right) + Q\left(D(s)\right).
 \end{align*}
 Setting $s = 0$ shows that  $A \in O(Q|_U)$. In particular, we obtain the
 equation
 \begin{equation}
   \label{eq:block}
 \bil{Q}{A(u)}{D(s)} = - Q\left(D(s)\right) \qquad (u \in U, s \in S).
 \end{equation}

 For the sake of a contradiction, suppose that $D \ne 0$ and fix $s_0 \in S$
 such that $D(s_0) \ne 0$. Setting $u = 0$ in \eqref{eq:block} shows that
 $Q(D(s_0)) = 0$. Since $A$ is invertible, it is onto, and we see from
 \eqref{eq:block} that $\bil{}{U}{D(s_0)} = 0$. Consequently, $D(s_0) \in U \cap
 \radQ$.

 If $\radQ = S$, then $D(s_0) \in U \cap S = 0$, a contradiction. Thus, we must
 have $\radQ \ne S$. The Singular Locus lemma shows that $q$ is even, and that
 in an appropriate choice of coordinates we have
 \[
 Q|_U = x_1 x_2 + \cdots + x_{m-2} x_{m-1} + x_m^2.
 \]
 As $D(s_0) \in U \cap \radQ$, we find that $D(s_0) = ae_m$, where $a \in
 \FF_q^\times$ and $e_m$ is the $m$-th standard basis vector. But now $Q(D(s)) =
 a^2 \ne 0$, another contradiction. We must concede that our initial assertion,
 namely $D \ne 0$, was false.

 Finally, we see that $C$ must be invertible, for otherwise it would have some
 nonzero element $s$ in its kernel, and then the the equation $g(s) = C(s) = 0$
 would contradict the invertibility of $g$.
\end{proof}

\begin{proof}[Proof of the Transitivity Theorem]
Decompose $\FF_q^{n+1} = U \oplus S$, where $S$ is the subspace of $\radQ$ on
which $Q$ vanishes. Let us, if necessary, replace the standard basis with one
that has its first $m$ vectors in $U$ and its last $n+1-m$ vectors in $S$. By
the definition of $S$, we know $Q|_S = 0$, so $Q$ depends only on $x_1, \ldots,
x_m$.

In the remainder of the proof, if $a \in \PP^n(\FF_q)$, we will write $\tilde a$
for a fixed choice of lift to $\FF_q^{n+1}$. If $\tilde a$ is a lift,
so is $t\cdot \tilde a$ for any $t \in \FF_q^\times$.

Consider first the case where $S = \radQ$. Then $\cR = \cS$. Choose $u_1, u_2
\in \cQ \smallsetminus \cR$ and $v_1, v_2 \in \cR$. We wish to produce an
isometry $g \in O(Q)$ such that $g(u_1) = u_2$ and $g(v_1) = v_2$. To that end,
we may write $\tilde u_i = \tilde a_i + \tilde b_i$ with $\tilde a_i \in U$ and
$\tilde b_i \in S$. Since $u_i \not\in \cR$, we have $\tilde a_i \ne 0$. Since
$S = \radQ$, we know that $Q|_U$ is strictly non-degenerate. By the Witt
Extension Theorem applied to $Q|_U$, there is $A \in O(Q|_U)$ such that
$A(\tilde a_1) = \alpha \tilde a_2$ for some $\alpha \in \FF_q^\times$. Choose
any $C \in \GL(S)$ such that $C(\tilde v_1) = \tilde v_2$. Finally, since
$\tilde a_1 \ne 0$, we can choose a linear map $B$ such that $B(\tilde a_1) =
\alpha \tilde b_2 - C(\tilde b_1)$. Define $g$ to be a block matrix with entries
$A,0,B,C$ as in the lemma. Then we have
  \[
    g(\tilde u_1) = \begin{pmatrix} A & 0 \\ B & C \end{pmatrix} \begin{pmatrix}
      \tilde a_1 \\ \tilde b_1 \end{pmatrix}
    = A(\tilde a_1) + B(\tilde a_1) + C(\tilde b_1) = \alpha \tilde a_2 + \alpha
    \tilde b_2 = \alpha \tilde u_2,
    \]
and similarly, $g(\tilde v_1) = \tilde v_2$. This completes the proof of the
theorem when $S = \radQ$.

In the remaining case, we have $q$ even and $S \subset \radQ$ has
codimension~1. In particular, after an appropriate choice of coordinates, our
quadratic form is
\[
Q = x_1x_2 + \cdots + x_{m-2}x_{m-1} + x_m^2, 
\]
where $m \le n + 1$. In these coordinates, we have
\begin{eqnarray*}
U &=& \{x_{m+1} = \cdots = x_{n+1} = 0\} \\
S &=& \{x_1 = \cdots = x_m = 0\} \\
\radQ &=& \{x_1 = \cdots = x_{m-1} = 0\}.
\end{eqnarray*}

Choose $u_1, u_2 \in \cQ \smallsetminus \cR$, $s_1, s_2 \in \cS$, and $v_1, v_2
\in \cR \smallsetminus \cS$. Choose lifts of all of these. Write $\tilde u_i =
\tilde a_i + \tilde b_i$ with $\tilde a_i \in U$ and $\tilde b_i \in S$. Since
$u_i \not \in \cR$, it follows that $\tilde a_i \not\in \radQ$. By Witt's
Extension Theorem, there is $A \in O(Q|_{U})$ such that $A(\tilde a_1) = \alpha
\tilde a_2$ for some $\alpha \in \FF_q^\times$. Choose any $C \in \GL(S)$ such
that $C(\tilde s_1) = \tilde s_2$.

Now let's look at $\tilde v_1$ and $\tilde v_2$. As these lie in $\radQ$, but
not in $S$, it follows that the $m$-th entry of each of them is nonzero. For
convenience, we may rescale their lifts so that they both have $m$-th entry
equal to~1. Now $\tilde v_1 = e_m + w_1$ and $\tilde v_2 = e_m + w_2$, where
$e_m$ is the $m$-th standard basis vector for $\FF_q^{n+1}$ and $w_1, w_2 \in
S$. (We do not decorate the $w_i$ with tildes because one or both of them could
be identically zero, and hence may not be lifts of elements of $\cS$.) Now
observe that $U \cap \radQ$ is the radical of $Q|_U$. In particular, $A$
preserves this subspace, so we must have $A(e_m) = \beta e_m$ for some $\beta
\in \FF_q^\times$. From the previous paragraph, we saw that $\tilde a_1 \not\in
\radQ$, and hence $\tilde a_1$ and $e_m$ are linearly independent.  Thus, we are
able to choose a linear map $B : U \to S$ such that
\begin{eqnarray*}
  B(\tilde a_1) &=& \alpha \tilde b_2 - C(\tilde b_1)  \\
  B(e_m) &=& \beta w_2 - C(w_1).
\end{eqnarray*}
 If we define $g$ to be a block matrix with entries $A, 0, B, C$ as in the
 lemma, then it follows as before that $g(\tilde u_1) = \alpha \tilde u_2$,
 $g(\tilde v_1) = \beta \tilde v_2$, and $g(\tilde s_1) = \tilde s_2$, which
 completes the proof.
\end{proof}

We close with a discussion of how to compute the orthogonal group of a quadratic
form $Q$ of dimension~$n+1$. The first sensible thing to do is to loop over the
$(n+1)\times(n+1)$ matrices $g$ with coefficients in $\FF_q$ and test whether
$g$ is invertible and whether $Q(g(x)) = Q(x)$. Since there are $q^{(n+1)^2}$
such matrices, this is only practical for small $n$ and small $q$.

\begin{example}
  \label{ex:naive_search}
  Consider the quadratic form over $\FF_2$ of dimension~5 given by
  \[
  Q(v,w,x,y,z) = vw + x^2.
  \]
  There are $2^{25} \approx 10^{7.5}$ matrices to search through.  Running Sage
  on a 2.6 GHz Intel Core i5, the naive search took approximately 54 minutes to compute $O(Q)$.
\end{example}

Typically, a more efficient method for computing $O(Q)$ can be given by using
our Transitivity Theorem.

\noindent
\textbf{Step 1.} Compute $\cQ$. To accomplish this, we loop over the the
$\frac{q^{n+1} - 1}{q-1}$ points $x \in \PP^n(\FF_q)$ and keep $x$ if $Q(x) = 0$.

\noindent
\textbf{Step 2.} Compute $\cR$ and $\cS$. The set $\cR$ can be computed by
linear algebra: it is the vanishing locus of the $n+1$ partial derivatives of
$Q$. Then we set $\cS = \cQ \cap \cR$.

\noindent
\textbf{Step 3.} Compute $O(Q)$. Let $\cY$ be as in the Transitivity Theorem;
let $i \in \{1,2,3\}$ be the number of sets in the product defining $\cY$. Fix
$p_0 \in \cY$. For $g \in \GL_{n+1}(\FF_q)$, write $g.p_0$ for the image of
$p_0$ in $(\PP^n)^i$. For any $p \in \cY$, the equation $g.p_0 = p$ provides
$i(n+1)$ linear equations in $(n+1)^2 + i$ unknowns: $(n+1)^2$ from the entries
of $g$ and $i$ coming from the scaling factors inherent in working with $i$
points of $\PP^n$. Linear algebra gives a space of solutions of dimension
$(n+1)^2 -in$. Looping over $(n+1)\times (n+1)$ matrices $g$ that satisfy these
conditions and recording those that are invertible and satisfy $Q(g(x)) = Q(x)$,
we obtain $O(Q)$.

The computations in Steps~1 and~2 are negligible compared to the search in
Step~3.  The search space in this latter step has size
\[
\#\cY \cdot q^{(n+1)^2 -in},
\]
which --- depending on $Q$ --- may or may not compare favorably to the naive
bound given by $\#\GL_{n+1}(\FF_q)$. By computing $\cY$, we can determine in
advance whether this is a better strategy before doing the search.

\begin{example}
  Returning to Example~\ref{ex:naive_search}, we find that
  $\#(\cQ \smallsetminus \cS) = 12$, $\#(\cR \smallsetminus \cS) = 4$, and
  $\#\cS = 3$. Thus, $\#\cY = 144$, and the search space involved in Step~3 of
  the above algorithm is $144 \cdot 2^{25 - 3\cdot 4} = 1179648 \approx
  10^{6.07}$. This search space is $10^{7.5 - 6.07} \approx 28.4$ times smaller
  than the naive search search. Running Sage on the same computer as in
  Example~\ref{ex:naive_search}, this improved algorithm took around 4~minutes
  to compute $O(Q)$.
\end{example}


\bibliographystyle{plain}
\bibliography{binary}
\end{document}